\title{Modal completeness of sublogics of the interpretability logic $\mathbf{IL}$}
\author{Taishi Kurahashi and Yuya Okawa}
\date{}
\theoremstyle{plain}
\newtheorem{thm}{Theorem}[section]
\newtheorem{lem}[thm]{Lemma}
\newtheorem{prop}[thm]{Proposition}
\newtheorem{cor}[thm]{Corollary}
\newtheorem{prob}[thm]{Problem}
\theoremstyle{definition}
\newtheorem{defn}[thm]{Definition}
\newtheorem{rem}[thm]{Remark}
\newcommand{\PA}{\mathbf{PA}}
\newcommand{\GL}{\mathbf{GL}}
\newcommand{\CL}{\mathbf{CL}}
\newcommand{\IL}{\mathbf{IL}}
\newcommand{\ILS}{\mathbf{IL}^-_{\mathrm{set}}}
\newcommand{\ILM}{\mathbf{ILM}}
\newcommand{\ILP}{\mathbf{ILP}}
\newcommand{\rank}{\mathrm{rank}}
\newcommand{\G}[1]{\mathbf{L#1}}
\newcommand{\J}[1]{\mathbf{J#1}}
\newcommand{\R}[1]{\mathbf{R#1}}
\begin{document}

\maketitle

\begin{abstract}
We study modal completeness and incompleteness of several sublogics of the interpretability logic $\IL$. 
We introduce the sublogic $\IL^-$, and prove that $\IL^-$ is sound and complete with respect to Veltman prestructures which are introduced by Visser. 
Moreover, we prove the modal completeness of twelve logics between $\IL^-$ and $\IL$ with respect to Veltman prestructures.  
On the other hand, we prove that eight natural sublogics of $\IL$ are modally incomplete. 
Finally, we prove that these incomplete logics are complete with respect to generalized Veltman prestructures. 
As a consequence of these investigations, we obtain that the twenty logics studied in this paper are all decidable. 
\end{abstract}

\section{Introduction}
The notion of formalized provability is well-studied in the framework of modal logic. 
The provability logic of Peano Arithmetic $\PA$ is the set of all modal formulas that are verifiable in $\PA$ when the modal operator $\Box$ is interpreted as the provability predicate $\mathrm{Pr}_{\PA}(x)$ of $\PA$. 
Solovay's arithmetical completeness theorem \cite{Sol76} states that the provability logic of $\PA$ is exactly axiomatized by the modal logic $\GL$ that is obtained from the smallest normal modal logic $\mathbf{K}$ by adding the axiom scheme $\Box(\Box A \to A) \to \Box A$. 
Segerberg \cite{Seg71} proved that the logic $\GL$ is sound and complete with respect to the class of all transitive and conversely well-founded finite Kripke frames. 

The interpretability logic $\IL$ is the base logic for modal logical investigations of the notion of relative interpretability. 
The language of $\IL$ is that of $\GL$ with the additional binary modal operator $\rhd$. 
The binary modal operator $\rhd$ binds stronger than $\to$, but weaker than $\lnot$, $\land$, $\lor$ and $\Box$. 
The intended meaning of the formula $A \rhd B$ is ``$\PA + B$ is relatively interpretable in $\PA + A$''. 
The inference rules of $\IL$ are the same as those of $\GL$, and the axioms of $\IL$ are those of $\GL$ together with the following axioms: 

\begin{description}
	\item [$\J{1}$] $\Box (A \to B) \to A \rhd B$; 
	\item [$\J{2}$] $(A \rhd B) \land (B \rhd C) \to A \rhd C$; 
	\item [$\J{3}$] $(A \rhd C) \land (B \rhd C) \to (A \lor B) \rhd C$; 
	\item [$\J{4}$] $A \rhd B \to (\Diamond A \to \Diamond B)$; 
	\item [$\J{5}$] $\Diamond A \rhd A$. 
\end{description}

The logic $\IL$ is not arithmetically complete by itself. 
The logic $\ILM$ is obtained from $\IL$ by adding Montagna's Principle $A \rhd B \to (A \land \Box C) \rhd (B \land \Box C)$, then Berarducci \cite{Ber90} and Shavrukov \cite{Sha88} independently proved that $\ILM$ is arithmetically sound and complete with respect to arithmetical interpretations for $\PA$. 
Also let $\ILP$ be the logic $\IL$ with the Persistence Principle $A \rhd B \to \Box (A \rhd B)$. 
Visser \cite{Vis90} proved the arithmetical completeness theorem of the logic $\ILP$ with respect to arithmetical interpretations for suitable finitely axiomatized fragments of $\PA$. 

These logics have Kripkean semantics. 
A triple $\langle W, R, \{S_x\}_{x \in W} \rangle$ is said to be an \textit{$\IL$-frame} or a \textit{Veltman frame} if $\langle W, R \rangle$ is a Kripke frame of $\GL$ and for each $x \in W$, $S_x$ is a transitive and reflexive binary relation on $R[x] : = \{y \in W : x R y\}$ satisfying the following property: $(\ast)$ $\forall y, z \in W(x R y\ \&\ y R z \Rightarrow y S_x z)$. 
De Jongh and Veltman \cite{deJVel90} proved that $\IL$ is sound and complete with respect to all finite $\IL$-frames. 
Also they proved that the logics $\ILM$ and $\ILP$ are sound and complete with respect to corresponding classes of finite $\IL$-frames, respectively. 

The logic $\IL$ and its extensions are not only arithmetically significant. 
It is known that for extensions of $\PA$, relative interpretability is equivalent to $\Pi_1$-conservativity, and this equivalence is provable in $\PA$ (see \cite{JapdeJ98}). 
Therefore the logic $\ILM$ is also the logic of $\Pi_1$-conservativity for $\PA$ (see also \cite{HajMon90}). 
On the other hand, when we consider the logics of $\Gamma$-conservativity for $\Gamma \neq \Pi_1$, the principle $\J{5}$ is no longer arithmetically valid. 
Ignatiev \cite{Ign91} introduced the logic of conservativity $\CL$ which is obtained from $\IL$ by removing $\J{5}$, and he proved that the extensions $\mathbf{SbCLM}$ and $\mathbf{SCL}$ of $\CL$ are exactly the logic of $\Pi_2$-conservativity and the logic of $\Gamma$-conservativity for $\Gamma \in \{\Sigma_n, \Pi_n : n \geq 3\}$, respectively. 

Ignatiev also proved that $\CL$ is complete with respect to Kripkean semantics. 
A triple $\langle W, R, \{S_x\}_{x \in W} \rangle$ is said to be a \textit{$\CL$-frame} if it is a structure with all properties of $\IL$-frame but $(\ast)$. 
Then $\CL$ is sound and complete with respect to the class of all finite $\CL$-frames. 
The correspondence between $\J{5}$ and the property $(\ast)$ is explained in the framework of $\IL^-$-frames. 
A triple $\langle W, R, \{S_x\}_{x \in W} \rangle$ is called an \textit{$\IL^-$-frame} or a \textit{Veltman prestructure} if $\langle W, R \rangle$ is a frame of $\GL$ and for each $x \in W$, $S_x$ is a binary relation on $W$ with $\forall y, z \in W(y S_x z \Rightarrow x R y)$. 
Then Visser \cite{Vis88} stated that for any $\IL^-$-frame, the validity of the scheme $\J{5}$ is equivalent to the property $(\ast)$. 

Visser also showed, for example, that for any $\IL^-$-frame, the validity of $\J{4}$ is equivalent to the property $\forall x, y, z \in W(y S_x z \Rightarrow x R z)$. 
However, systematic study of sublogics of $\IL$ through $\IL^-$-frames has not been done so far. 
In this paper, we do this study, and prove the modal completeness and incompleteness of several sublogics of $\IL$. 

In Section \ref{Sec:IL^-}, we introduce the logic $\IL^-$ that is valid in all $\IL^-$-frames. 
We introduce the notion of $\ILS$-frames that serves a wider class of models than $\IL^-$-frames. 
Then we show that $\IL^-$ is also valid in all $\ILS$-frames. 
In Section \ref{Sec:Ext}, we investigate several axiom schemata and extensions of $\IL^-$. 
Section \ref{Sec:Lem} is devoted to proving lemmas used to prove our modal completeness theorems.
Our modal completeness theorem with respect to $\IL^-$-frames is proved in Section \ref{Sec:Compl}. 
In Section \ref{Sec:Incompl}, we prove several natural sublogics of $\IL$ are incomplete with respect to $\IL^-$-frames. 
In Section \ref{Sec:GCompl}, we prove these incomplete logics are complete with respect to $\ILS$-frames. 
Finally, Section \ref{Sec:CR} concludes the present paper with a few words.

\section{The logic $\IL^-$}\label{Sec:IL^-}

In this section, we introduce and investigate the logic $\IL^-$. 
The language of $\IL^-$ consists of countably many propositional variables $p, q, r, \ldots$, logical constants $\top$, $\bot$, connectives $\neg$, $\land$, $\lor$, $\to$ and modal operators $\Box$, $\rhd$. 
The expression $\Diamond A$ is an abbreviation for $\lnot \Box \lnot A$. 
We show that every theorem of $\IL^-$ is valid in all $\IL^-$-frames defined below (see Definition \ref{Def:ILf}). 
In fact, we will prove in Section \ref{Sec:Compl} that $\IL^-$ is sound and complete with respect to the class of all (finite) $\IL^-$-frames. 
The logic $\IL^-$ is the basis for our logics discussed in this paper. 

First, we introduce the logic $\IL^-$. 
Note that the axioms and rules of the logic $\IL^-$ are intended to characterize the logic of the class of all $\IL^{-}$-frames.

\begin{defn}
The axiom schemata of the logic $\IL^-$ are as follows: 
\begin{description}
	\item [$\G{1}$] All tautologies in the language of $\IL^-$; 
	\item [$\G{2}$] $\Box(A \to B) \to (\Box A \to \Box B)$; 
	\item [$\G{3}$] $\Box(\Box A \to A) \to \Box A$; 
	\item [$\J{3}$] $(A \rhd C) \land (B \rhd C) \to (A \lor B) \rhd C$; 
	\item [$\J{6}$] $\Box A \leftrightarrow (\neg A) \rhd \bot$. 
\end{description}
The inference rules of $\IL^-$ are Modus Ponens $\dfrac{A\ \ \ A \to B}{B}$, Necessitation $\dfrac{A}{\Box A}$, $\R{1}$ and $\R{2}$. 
Here the rules $\R{1}$ and $\R{2}$ are defined as follows: 
\begin{description}
	\item [$\R{1}$] $\dfrac{A \to B}{C \rhd A \to C \rhd B}$; 
	\item [$\R{2}$] $\dfrac{A \to B}{B \rhd C \to A \rhd C}$. 
\end{description}
\end{defn}

The logic $\GL$ consists of the axiom schemata $\G{1}, \G{2}$ and $\G{3}$, and of the inference rules Modus Ponens and Necessitation (in the language without $\rhd$). 
Hence $\IL^-$ is an extension of $\GL$. 
In Subsection \ref{SSec:CLIL}, we prove that $\IL$ proves the axiom $\J{6}$ and admits the rules $\R{1}$ and $\R{2}$ (see Proposition \ref{CLP1}).
Therefore $\IL^-$ is a sublogic of $\IL$.

We introduce $\IL^-$-frames that are originally introduced by Visser \cite{Vis88} as Veltman prestructures. 

\begin{defn}\label{Def:ILf}
We say that a triple $\langle W, R, \{S_x\}_{x \in W} \rangle$ is an \textit{$\IL^-$-frame} if it satisfies the following conditions: 
\begin{enumerate}
	\item $W$ is a non-empty set; 
	\item $R$ is a transitive and conversely well-founded binary relation on $W$; 
	\item For each $x \in W$, $S_x$ is a binary relation on $W$ satisfying $\forall y, z \in W(y S_x z \Rightarrow x R y)$. 
\end{enumerate}
A quadruple $\langle W, R, \{S_x\}_{x \in W}, \Vdash \rangle$ is called an \textit{$\IL^-$-model} if $\langle W, R, \{S_x\}_{x \in W} \rangle$ is an $\IL^-$-frame and $\Vdash$ is a binary relation between $W$ and the set of all formulas satisfying the usual conditions for satisfaction with the following conditions: 
\begin{itemize}
	\item $x \Vdash \Box A \iff \forall y \in W(x R y \Rightarrow y \Vdash A)$. 
	\item $x \Vdash A \rhd B \iff \forall y \in W(x R y \ \&\ y \Vdash A \Rightarrow \exists z \in W (y S_x z\ \&\ z \Vdash B))$. 
\end{itemize}
A formula $A$ is said to be \textit{valid} in an $\IL^-$-frame $\langle W, R, \{S_x\}_{x \in W} \rangle$ if for all satisfaction relations $\Vdash$ on the frame and all $x \in W$, $x \Vdash A$. 
\end{defn}

For each $x \in W$, let $R[x] : = \{y \in W : x R y\}$. 
In this notation, the third clause in the definition of $\IL^-$-frames states that $S_x$ is a relation on $R[x] \times W$. 
Note that this clause can be removed from the definition because it is not forced by axiom schemata of $\IL^-$ and does not affect the definition of $\Vdash$. 
We impose this clause to simplify our arguments.

We prove that $\IL^-$ is sound with respect to the class of all $\IL^-$-frames. 

\begin{prop}\label{0P1}
Every theorem of $\IL^-$ is valid in all $\IL^-$-frames. 
\end{prop}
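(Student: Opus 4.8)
The plan is to prove soundness by the standard induction on the length of derivations in $\IL^-$. Fix an arbitrary $\IL^-$-frame $\langle W, R, \{S_x\}_{x \in W} \rangle$ and an arbitrary satisfaction relation $\Vdash$ on it; I will show that every theorem is forced at every point of $W$. Since the notion of validity quantifies over all $\Vdash$ and all points, it suffices to check that each axiom scheme is forced at an arbitrary point, and that each inference rule preserves the property of being forced everywhere.

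For the axioms, $\G{1}$ is immediate since the propositional connectives are interpreted in the usual way, and $\G{2}$, $\G{3}$ are the familiar $\GL$ axioms whose validity on transitive, conversely well-founded frames $\langle W, R\rangle$ is classical (Segerberg); here conversely well-foundedness of $R$ is exactly what is needed for $\G{3}$. The two genuinely new things to check are $\J{3}$ and $\J{6}$. For $\J{3}$, suppose $x \Vdash (A \rhd C) \land (B \rhd C)$ and let $y$ be such that $x R y$ and $y \Vdash A \lor B$; in either case ($y \Vdash A$ or $y \Vdash B$) the corresponding conjunct gives a $z$ with $y S_x z$ and $z \Vdash C$, so $x \Vdash (A \lor B) \rhd C$. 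For $\J{6}$, unwinding the semantic clause for $\rhd$: $x \Vdash (\neg A) \rhd \bot$ iff for every $y$ with $x R y$ and $y \Vdash \neg A$ there is a $z$ with $y S_x z$ and $z \Vdash \bot$; since no point forces $\bot$, this holds iff there is no $y$ with $x R y$ and $y \Vdash \neg A$, i.e.\ iff $\forall y(x R y \Rightarrow y \Vdash A)$, which is exactly $x \Vdash \Box A$. Note this equivalence uses only clause (3) of the frame definition minimally and does not need any further property of the $S_x$.

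For the rules, Modus Ponens and Necessitation are handled as usual (Necessitation: if $A$ is forced at every point, then in particular at every $R$-successor of any given $x$, so $\Box A$ is forced at $x$). For $\R{1}$, assume $A \to B$ is forced everywhere; to show $C \rhd A \to C \rhd B$ is forced at $x$, suppose $x \Vdash C \rhd A$ and take $y$ with $x R y$ and $y \Vdash C$; we get $z$ with $y S_x z$ and $z \Vdash A$, hence $z \Vdash B$ by the global assumption, so the same $z$ witnesses $x \Vdash C \rhd B$. For $\R{2}$, assume $A \to B$ everywhere and $x \Vdash B \rhd C$; if $x R y$ and $y \Vdash A$ then $y \Vdash B$, so there is $z$ with $y S_x z$ and $z \Vdash C$, giving $x \Vdash A \rhd C$.

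I do not expect any serious obstacle: the argument is a routine induction, and the only mildly delicate points are getting the quantifier structure in the $\rhd$-clause right when verifying $\J{3}$, $\J{6}$, $\R{1}$, $\R{2}$, and invoking conversely well-foundedness at the right moment for $\G{3}$. It is worth remarking explicitly that nowhere in the verification do we use any closure or coherence property of the family $\{S_x\}_{x \in W}$ beyond clause (3) (indeed not even that, as the text observes), which is precisely why $\IL^-$ is the logic of the full class of Veltman prestructures.
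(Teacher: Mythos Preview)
Your proposal is correct and follows essentially the same approach as the paper: induction on derivations, appeal to the classical soundness of $\GL$ on transitive conversely well-founded frames for $\G{1}$--$\G{3}$, Modus Ponens and Necessitation, and direct verification of $\J{3}$, $\J{6}$, $\R{1}$, $\R{2}$ via the semantic clause for $\rhd$. Your remark that clause~(3) of the frame definition is not actually needed also matches the paper's own observation.
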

\begin{proof}
We prove the claim by induction on the length of proofs in $\IL^-$. 
Since the modal logic $\GL$ is sound with respect to the class of all transitive and conversely well-founded Kripke frames (see \cite{Boo93}), all theorems of $\GL$ in the language of $\IL^{-}$ are valid in all $\IL^{-}$-frames. 
That is, $\G{1}$, $\G{2}$ and $\G{3}$ are valid in all $\IL^{-}$-frames, and the rules Modus Ponens and Necessitation preserve the validity. 
Then it suffices to prove that $\J{3}$ and $\J{6}$ are valid in all $\IL^-$-frames, and the rules $\R{1}$ and $\R{2}$ preserve the validity. 

Let $F = \langle W, R, \{S_x\}_{x \in W} \rangle$ be an $\IL^{-}$-frame, $x \in W$ be any element and $\Vdash$ be any satisfaction relation on $F$. 

$\J{3}$: Suppose $x \Vdash (A \rhd C) \land (B \rhd C)$. 
Let $y \in W$ be any element with $x R y$ and $y \Vdash A \lor B$. 
In either case of $y \Vdash A$ and $y \Vdash B$, there exists $z \in W$ such that $y S_x z$ and $z \Vdash C$. 
Thus we obtain $x \Vdash (A \lor B) \rhd C$. 

$\J{6}$: $(\rightarrow)$: 
Suppose $x \Vdash \Box A$. 
Then there is no $y \in W$ such that $x R y$ and $y \Vdash \neg A$. 
Hence $x \Vdash (\neg A) \rhd \bot$. 

$(\leftarrow)$: 
Suppose $x \Vdash (\neg A) \rhd \bot$. 
If there were $y \in W$ with $x R y$ and $y \Vdash \neg A$, then there would be some $z \in W$ such that $z \Vdash \bot$, a contradiction. 
Thus if $x R y$, then $y \Vdash A$, and this means $x \Vdash \Box A$. 

$\R{1}$: Assume $A \to B$ is valid in $F$. 
Suppose $x \Vdash C \rhd A$ and let $y \in W$ be such that $x R y$ and $y \Vdash C$. 
Then there exists $z \in W$ such that $y S_x z$ and $z \Vdash A$. 
By the assumption, $z \Vdash B$. 
Then we obtain $x \Vdash C \rhd B$. 

$\R{2}$: Assume $A \to B$ is valid in $F$. 
Suppose $x \Vdash B \rhd C$ and let $y \in W$ be such that $x R y$ and $y \Vdash A$. 
By the assumption, $y \Vdash B$, and hence there exists $z \in W$ such that $y S_x z$ and $z \Vdash C$. 
Thus we have $x \Vdash A \rhd C$. 
\end{proof}

By the rules $\R{1}$ and $\R{2}$, we immediately obtain the following proposition. 

\begin{prop}\label{0P2}
Let $L$ be a logic with the inference rules $\R{1}$ and $\R{2}$. 
If $L \vdash A_0 \leftrightarrow A_1$ and $L \vdash B_0 \leftrightarrow B_1$, then $L \vdash A_0 \rhd B_0 \leftrightarrow A_1 \rhd B_1$. 
\end{prop}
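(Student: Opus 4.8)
The plan is to derive $L \vdash A_0 \rhd B_0 \leftrightarrow A_1 \rhd B_1$ from the two biconditional hypotheses by combining four applications of the rules $\R{1}$ and $\R{2}$. The underlying idea is that $\R{1}$ lets us replace the right-hand argument of $\rhd$ by a provably implied formula, and $\R{2}$ lets us replace the left-hand argument by a formula that provably implies it; since we have biconditionals, we can apply each rule in both directions and chain the resulting implications together.

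First I would work on the antecedent side. From $L \vdash A_0 \leftrightarrow A_1$ we obtain $L \vdash A_1 \to A_0$, and applying $\R{2}$ (with the roles $A := A_1$, $B := A_0$, $C := B_0$) yields $L \vdash A_0 \rhd B_0 \to A_1 \rhd B_0$. Next, from $L \vdash B_0 \leftrightarrow B_1$ we get $L \vdash B_0 \to B_1$, and applying $\R{1}$ (with $A := B_0$, $B := B_1$, $C := A_1$) gives $L \vdash A_1 \rhd B_0 \to A_1 \rhd B_1$. Chaining these two implications by propositional logic (available since $L$ contains the tautologies and is closed under Modus Ponens, or at least extends a logic with these features) produces $L \vdash A_0 \rhd B_0 \to A_1 \rhd B_1$.

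The converse implication is entirely symmetric: from $L \vdash A_0 \to A_1$ apply $\R{2}$ to get $L \vdash A_1 \rhd B_1 \to A_0 \rhd B_1$, and from $L \vdash B_1 \to B_0$ apply $\R{1}$ to get $L \vdash A_0 \rhd B_1 \to A_0 \rhd B_0$; chaining these gives $L \vdash A_1 \rhd B_1 \to A_0 \rhd B_0$. Finally, conjoining the two implications yields the desired biconditional $L \vdash A_0 \rhd B_0 \leftrightarrow A_1 \rhd B_1$.

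There is no real obstacle here; the only point requiring a modicum of care is the bookkeeping of which rule ($\R{1}$ versus $\R{2}$) applies to which argument position and in which direction, and making sure that the four intermediate implications compose correctly. One should also note the mild assumption that $L$ is closed under propositional reasoning (transitivity of implication and conjunction introduction), which is implicit in calling $L$ a ``logic'' with these inference rules; this is what licenses the chaining steps.
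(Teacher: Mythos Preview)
Your proof is correct and is exactly the routine argument the paper has in mind; the paper does not even spell it out, remarking only that the proposition follows immediately from the rules $\R{1}$ and $\R{2}$. Your observation that propositional reasoning is tacitly assumed is well taken, and in the paper's setting every logic $L$ under consideration extends $\IL^-$ (hence contains $\G{1}$ and Modus Ponens), so the chaining steps are indeed licensed.
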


In this paper, we freely use Proposition \ref{0P2} without any mention. 
In $\IL^-$, the inference rule $\R{2}$ is strengthened as follows. 

\begin{prop}\label{0P3}\leavevmode
\begin{enumerate}
	\item $\IL^- \vdash \Box \neg A \to A \rhd B$. 
	\item $\IL^- \vdash \Box(A \to B) \to (B \rhd C \to A \rhd C)$. 
\end{enumerate}
\end{prop}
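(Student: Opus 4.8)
The plan is to derive both items purely syntactically inside $\IL^-$, exploiting $\J{6}$ to turn boxes into $\rhd\bot$-statements, the rules $\R{1}$ and $\R{2}$ to weaken the right- and left-hand arguments of $\rhd$, and $\J{3}$ to glue a case split back together. For item~1, I would first instantiate $\J{6}$ with $\neg A$ in place of $A$, obtaining $\IL^- \vdash \Box\neg A \to (\neg\neg A)\rhd\bot$. Applying $\R{2}$ to the tautology $A \to \neg\neg A$ gives $\IL^- \vdash (\neg\neg A)\rhd\bot \to A\rhd\bot$, and applying $\R{1}$ to the tautology $\bot \to B$ gives $\IL^- \vdash A\rhd\bot \to A\rhd B$. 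Chaining these three implications yields $\IL^- \vdash \Box\neg A \to A\rhd B$. (Equivalently, one may first record the convenient equivalence $\IL^- \vdash \Box\neg A \leftrightarrow A\rhd\bot$ using Proposition~\ref{0P2}, and then weaken $\bot$ to $B$ via $\R{1}$.)

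For item~2 the key idea is that $A$ is tautologically equivalent to $(A\land B)\lor(A\land\neg B)$ and the two disjuncts can be handled separately. For the disjunct $A\land B$: applying $\R{2}$ to the tautology $A\land B \to B$ gives $\IL^- \vdash B\rhd C \to (A\land B)\rhd C$. For the disjunct $A\land\neg B$: since $(A\to B)\to\neg(A\land\neg B)$ is a tautology, Necessitation together with $\G{2}$ gives $\IL^- \vdash \Box(A\to B)\to\Box\neg(A\land\neg B)$, and then item~1 (with $A\land\neg B$ for $A$ and $C$ for $B$) gives $\IL^- \vdash \Box\neg(A\land\neg B)\to(A\land\neg B)\rhd C$; composing the two yields $\IL^- \vdash \Box(A\to B)\to(A\land\neg B)\rhd C$. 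Now an instance of $\J{3}$ gives $\IL^- \vdash (A\land B)\rhd C \land (A\land\neg B)\rhd C \to ((A\land B)\lor(A\land\neg B))\rhd C$, and applying $\R{2}$ to the tautology $A\to(A\land B)\lor(A\land\neg B)$ gives $\IL^- \vdash ((A\land B)\lor(A\land\neg B))\rhd C \to A\rhd C$. Combining these four theorems by propositional logic (every step is an implication, so no deduction theorem is needed) yields $\IL^- \vdash \Box(A\to B)\to(B\rhd C\to A\rhd C)$.

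The main obstacle is item~2: because $\IL^-$ does not contain $\J{1}$, one cannot simply pass from $\Box(A\to B)$ to $A\rhd B$, so the proof must route the ``vacuous'' part $A\land\neg B$ of $A$ through $\J{6}$ --- that is, through $\bot$-interpretability, via item~1 --- and then reassemble $A$ from $A\land B$ and $A\land\neg B$ using $\J{3}$. Once this case split is set up, the remainder is routine bookkeeping with $\R{1}$, $\R{2}$ and propositional reasoning.
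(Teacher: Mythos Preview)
Your proof is correct and follows essentially the same approach as the paper. The only cosmetic difference is in item~2: the paper uses the decomposition $A \to (A \land \neg B) \lor B$ rather than your $A \to (A \land B) \lor (A \land \neg B)$, which lets it feed $B \rhd C$ directly into $\J{3}$ without the preliminary step $B \rhd C \to (A \land B) \rhd C$; otherwise the argument is identical.
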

\begin{proof}
1. Since $\IL^- \vdash \bot \to B$, we have $\IL^- \vdash A \rhd \bot \to A \rhd B$ by the rule $\R{1}$. 
By the axiom $\J{6}$, we obtain $\IL^- \vdash \Box \neg A \to A \rhd B$. 

2. Since $\IL^- \vdash \Box(A \to B) \to \Box \neg(A \land \neg B)$, we have $\IL^- \vdash \Box(A \to B) \to (A \land \neg B) \rhd C$ by 1. 
Then $\IL^- \vdash \Box(A \to B) \land (B \rhd C) \to ((A \land \neg B)\lor B) \rhd C$ by the axiom $\J{3}$. 
Since $\IL^- \vdash A \to (A \land \neg B) \lor B$, we have $\IL^- \vdash ((A \land \neg B) \lor B) \rhd C \to A \rhd C$ by the rule $\R{2}$. 
Therefore we conclude $\IL^- \vdash \Box(A \to B) \land (B \rhd C) \to A \rhd C$. 
\end{proof}

Thus $\IL^-$ is deductively equivalent to the system obtained from $\IL^-$ by replacing the rule $\R{2}$ by the axiom scheme $\Box(A \to B) \to (B \rhd C \to A \rhd C)$. 

In Section \ref{Sec:Compl}, we will prove that several extensions of $\IL^-$ are complete with respect to corresponding classes of $\IL^-$-frames. 
On the other hand, we will also prove that several logics are not complete. 
To prove this incompleteness, we use the notion of $\ILS$-frames that is a general notion of $\IL_{\mathrm{set}}$-frames or generalized Veltman frames introduced by Verbrugge \cite{Ver92} (see also \cite{Vuk96,Joo98}). 

\begin{defn}\label{Def:ILS}
A tuple $\langle W, R, \{S_x\}_{x \in W} \rangle$ is called an \textit{$\ILS$-frame} if it satisfies the following conditions: 
\begin{enumerate}
	\item $W$ is a non-empty set; 
	\item $R$ is a transitive and conversely well-founded binary relation on $W$; 
	\item For each $x \in W$, $S_x$ is a relation on $W \times (\mathcal{P}(W) \setminus \{\emptyset\})$ such that $\forall y \in W, \forall V \subseteq W (y S_x V \Rightarrow x R y)$; 
	\item (Monotonicity) $\forall x, y \in W, \forall V, U \subseteq W (y S_x V\ \&\ V \subseteq U \Rightarrow y S_x U)$. 
\end{enumerate}
As in the definition of $\IL^-$-frames, we can define $\ILS$-models $\langle W, R, \{S_x\}_{x \in W}, \Vdash \rangle$ with the following clause: 
\begin{itemize}
	\item $x \Vdash A \rhd B \iff \forall y \in W(x R y\ \&\ y \Vdash A \Rightarrow \exists V \subseteq W (y S_x V\ \&\ \forall z \in V (z \Vdash B)))$. 
\end{itemize}
\end{defn}

Let $M = \langle W, R, \{S_x\}_{x \in W}, \Vdash \rangle$ be an $\IL^-$-model. 
For each $x \in W$, we define the relation $S_x' \subseteq W \times (\mathcal{P}(W) \setminus \{\emptyset\})$ by $y S_x' V : \iff \exists z \in V (y S_x z)$. 
Then it is shown that $\langle W, R, \{S_x'\}_{x \in W} \rangle$ is an $\ILS$-frame. 
Let $\Vdash'$ be the unique satisfaction relation on this $\ILS$-frame satisfying that for any $x \in W$ and any propositional variable $p$, $x \Vdash' p$ if and only if $x \Vdash p$. 
Then $\langle W, R, \{S_x'\}_{x \in W}, \Vdash' \rangle$ is an $\ILS$-model, and for any $x \in W$ and any formula $A$, $x \Vdash A$ if and only if $x \Vdash' A$.  
Therefore, in this sense, every $\IL^-$-frame (resp.~model) can be recognized as an $\ILS$-frame (resp.~model). 
We strengthen Proposition \ref{0P1}. 

\begin{prop}\label{0P4}
Every theorem of $\IL^-$ is valid in all $\ILS$-frames. 
\end{prop}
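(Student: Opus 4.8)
The plan is to argue by induction on the length of proofs in $\IL^-$, mirroring the proof of Proposition \ref{0P1} but with the $\rhd$-clause of $\IL^-$-models replaced by the generalized clause for $\ILS$-models. First I would note that the embedding of $\IL^-$-models into $\ILS$-models described just above does not give the statement for free: it shows that validity over all $\ILS$-frames implies validity over all $\IL^-$-frames, which is the converse of what is needed here. So I would fix an $\ILS$-frame $F = \langle W, R, \{S_x\}_{x \in W} \rangle$, an element $x \in W$, and a satisfaction relation $\Vdash$ on $F$, and proceed directly.

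For the $\GL$-part of the axiomatization --- the schemata $\G{1}$, $\G{2}$, $\G{3}$ together with Modus Ponens and Necessitation --- nothing changes: by definition $\langle W, R \rangle$ is a transitive and conversely well-founded Kripke frame, and the clause for $\Box$ in an $\ILS$-model is the usual one, so the soundness of $\GL$ applies without modification. It then remains to verify that $\J{3}$ and $\J{6}$ are valid in $F$ and that $\R{1}$ and $\R{2}$ preserve validity over $F$.

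For $\J{3}$, $\R{1}$ and $\R{2}$ I would give the obvious adaptations of the arguments in Proposition \ref{0P1}: wherever the earlier proof produces a single witness $z$ with $y S_x z$ and $z \Vdash B$, one now uses the set $V$ with $y S_x V$ furnished by the relevant hypothesis and observes that $z \Vdash B$ holds for every $z \in V$. Concretely, for $\J{3}$ one takes the witnessing set coming from whichever of $y \Vdash A$, $y \Vdash B$ holds; for $\R{1}$ the assumption that $A \to B$ is valid in $F$ upgrades ``$z \Vdash A$ for every $z \in V$'' to ``$z \Vdash B$ for every $z \in V$''; for $\R{2}$ that same assumption lets one replace the hypothesis $y \Vdash A$ by $y \Vdash B$ before applying $x \Vdash B \rhd C$. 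The left-to-right direction of $\J{6}$ is vacuous, exactly as before.

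The one step where the specific shape of $\ILS$-frames is genuinely used is the right-to-left direction of $\J{6}$, and this is where I expect the only real subtlety to lie. Assuming $x \Vdash (\neg A) \rhd \bot$, I would suppose for contradiction that some $y$ with $x R y$ satisfies $\neg A$; then there is a set $V$ with $y S_x V$ and $z \Vdash \bot$ for every $z \in V$, but since the range of $S_x$ consists only of non-empty sets, $V$ has an element, and that element forces $\bot$, a contradiction. Hence $y \Vdash A$ for every $R$-successor $y$ of $x$, i.e.\ $x \Vdash \Box A$. I would also remark that the Monotonicity condition plays no role here: it becomes relevant only for the additional axiom schemata treated in later sections.
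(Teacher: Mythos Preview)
Your proposal is correct and follows essentially the same approach as the paper: induction on the length of $\IL^-$-proofs, with only $\J{3}$, $\J{6}$, $\R{1}$ and $\R{2}$ requiring explicit treatment, and the key point for $\J{6}$ being that every $V$ in the range of $S_x$ is non-empty. The paper packages the $\J{6}$ argument as a chain of equivalences rather than a contradiction argument, but the content is identical; your remark that Monotonicity is unused is also recorded in the paper immediately after the proof.
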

\begin{proof}
Let $F = \langle W, R, \{S_x\}_{x \in W} \rangle$ be an $\ILS$-frame, $x \in W$ be any element and $\Vdash$ be any satisfaction relation on $F$. 
As in the proof of Proposition \ref{0P1}, we only prove the cases $\J{3}$, $\J{6}$, $\R{1}$ and $\R{2}$. 

	$\J{3}$: 
	Suppose $x \Vdash (A \rhd C) \land (B \rhd C)$. 
	Let $y \in W$ be any element such that $x R y$ and $y \Vdash A \lor B$. 
	In either case that $y \Vdash A$ and $y \Vdash B$, there exists $V \subseteq W$ such that $y S_x V$ and $\forall z \in V (z \Vdash C)$. 
	Therefore $x \Vdash(A \lor B) \rhd C$. 
	
	$\J{6}$: This follows from the following equivalences: 
\begin{align*}
	x \Vdash \Box A & \iff \forall y(x R y \Rightarrow y \Vdash A), \\
	& \iff \forall y(x R y\ \&\ y \Vdash \neg A \Rightarrow \exists V(V \neq \emptyset \ \&\  y S_x V \ \&\ \forall z \in V(z \Vdash \bot))),\\
	& \iff x \Vdash (\neg A) \rhd \bot. 
\end{align*}

	$\mathbf{R1}$: 
	Assume that $A \to B$ is valid in $F$. 
	Suppose $x \Vdash C \rhd A$. 
	Let $y \in W$ be such that $x R y$ and $y \Vdash C$. 
	Then there exists $V \subseteq W$ such that $y S_x V$ and $\forall z \in V(z \Vdash A)$. 
	For each $z \in V$, $z \Vdash B$ by the assumption. 
	Therefore we conclude $x \Vdash C \rhd B$.

	$\mathbf{R2}$: 
	Assume that $A \to B$ is valid in $F$. 
	Suppose $x \Vdash B \rhd C$ and let $y \in W$ be any element with $x R y$ and $y \Vdash A$. 
	Then $y \Vdash B$ by the assumption, and hence there exists $V \subseteq W$ such that $y S_x V$ and $\forall z \in V(z \Vdash C)$. 
	Thus we have $x \Vdash A \rhd C$. 

\end{proof}

\begin{rem}
In the proof of Proposition \ref{0P4}, Monotonicity of $\ILS$-frames is not used at all. 
As in the case of $\IL^{-}$-frames, conditions 3 and 4 in Definition \ref{Def:ILS} are introduced because they are useful properties to have. 

In usual definition of $\IL_{\mathrm{set}}$-frame $\langle W, R, \{S_x\}_{x \in W} \rangle$,  each $S_x$ is a relation on $R[x] \times  (\mathcal{P}(R[x]) \setminus \{\emptyset\})$. 
Therefore $\IL_{\mathrm{set}}$-frames are not $\ILS$-frames because of our definition of Monotonicity. 
On the other hand, from Proposition \ref{4P6} below, when we deal with logics containing $\J{4}_{+}$,  we can restrict conditions 3 and 4 in Definition \ref{Def:ILS} so that $S_{x}$ is a relation on $R[x] \times  (\mathcal{P}(R[x]) \setminus \{\emptyset\})$. 
\end{rem}

\section{Extensions of $\IL^-$}\label{Sec:Ext}

In this section, we investigate several additional axiom schemata and several extensions of $\IL^-$. 
Let $\Sigma_1, \ldots, \Sigma_n$ be axiom schemata. 
Then $\IL^-(\Sigma_1, \ldots, \Sigma_n)$ is the logic $\IL^-$ together with the axiom schemata $\Sigma_1, \ldots, \Sigma_n$. 
Let $L$ be an extension of $\IL^-$. 
We say that $L$ is \textit{complete with respect to finite $\IL^-$-frames} (resp.~$\ILS$-frames) if for any formula $A$, $L \vdash A$ if and only if $A$ is valid in all finite $\IL^-$-frames (resp.~$\ILS$-frames) where all axioms of $L$ are valid. 

\subsection{The axiom scheme $\J{1}$}

In this subsection, we investigate the axiom scheme $\J{1}$. 
\begin{description}
	\item [$\J{1}$] $\Box (A \to B) \to A \rhd B$.  
\end{description}
First, we show that the following axiom scheme $\J{1}'$ is equivalent to $\J{1}$ over $\IL^-$. 
\begin{description}
	\item [$\J{1}'$] $A \rhd A$.  
\end{description}

\begin{prop}\label{1P1}
The logics $\IL^-(\J{1})$ and $\IL^-(\J{1}')$ are deductively equivalent. 
\end{prop}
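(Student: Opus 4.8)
The plan is to show the two implications $\IL^-(\J{1}) \vdash \J{1}'$ and $\IL^-(\J{1}') \vdash \J{1}$ separately, since deductive equivalence of the two logics amounts exactly to each proving the other's characteristic axiom scheme (the remaining axioms and rules being shared).

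For the easy direction, $\IL^-(\J{1}) \vdash A \rhd A$: instantiate $\J{1}$ with $B := A$ to get $\Box(A \to A) \to A \rhd A$. Since $A \to A$ is a tautology ($\G{1}$), Necessitation gives $\Box(A \to A)$, and Modus Ponens yields $A \rhd A$. This is a one-line argument.

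For the converse, $\IL^-(\J{1}') \vdash \Box(A \to B) \to A \rhd B$: start from the instance $A \rhd A$ of $\J{1}'$. The idea is to weaken the right-hand $A$ to $B$ under the hypothesis $\Box(A \to B)$. Concretely, I would use Proposition~\ref{0P3}(2) or rather its $\R{1}$-analogue: from $\Box(A \to B)$ one wants $A \rhd A \to A \rhd B$. The cleanest route is to note that under the assumption $\Box(A\to B)$, inside the relevant worlds $A$ and $B$ are interchangeable; more formally, one can argue in $\IL^-$ that $\Box(A \to B) \to (A \rhd A \to A \rhd B)$. To see this, observe $\IL^- \vdash \Box(A\to B) \to \Box((A \land \neg B)\to \bot)$, hence by Proposition~\ref{0P3}(1), $\IL^- \vdash \Box(A\to B) \to (A\land\neg B)\rhd B$; also $A \rhd A$ together with $\R{1}$ applied to the tautology $A \to (A \land B) \lor (A \land \neg B)$... actually a smoother path: from $A \rhd A$, and $\IL^- \vdash A \to ((A\land B)\lor(A\land\neg B))$, rule $\R{2}$ gives nothing directly, so instead use that $\IL^- \vdash \Box(A\to B)\to \Box(A \leftrightarrow (A \land B))$, whence by Proposition~\ref{0P2} we get $\Box(A\to B) \to (A \rhd A \leftrightarrow (A\land B)\rhd A)$; combining with $A\rhd A$ yields $\Box(A\to B)\to (A\land B)\rhd A$, and then $\R{1}$ on $A\land B \to B$... no, $\R{1}$ replaces the left argument. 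One wants to replace the right $A$ by $B$: apply $\R{1}$ to $A \to B$? But $A\to B$ is only available under the box. The correct tool is: $\IL^- \vdash \Box(A\to B) \to (C \rhd A \to C \rhd B)$ — an $\R{1}$-style strengthening dual to Proposition~\ref{0P3}(2). I expect this lemma (call it the ``boxed $\R{1}$'') either is already implicit or is derived exactly as Proposition~\ref{0P3}(2) was derived from $\R{2}$ and $\J{3}$: namely $\IL^- \vdash \Box(A\to B) \to C\rhd(A\land B)$ fails in general, so instead one uses $C \rhd A$, the fact that $A$ entails $(A\land B)\lor(A \land \neg B)$ provably, $\R{2}$ to get $((A\land B)\lor(A\land\neg B))\rhd C'$...

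Given the subtlety, the honest summary of my plan is: the forward direction is immediate from Necessitation and Modus Ponens applied to $\J{1}$ with $B:=A$; the backward direction goes via establishing, inside $\IL^-$, the principle $\Box(A \to B) \to (A \rhd A \to A \rhd B)$, after which $\J{1}'$ plus Modus Ponens delivers $\J{1}$. The main obstacle is deriving that boxed monotonicity principle using only $\J{3}$, $\J{6}$, $\R{1}$, $\R{2}$ and the already-proved Proposition~\ref{0P3} — the delicate point being that the implication $A\to B$ is only available ``under a box,'' so one cannot directly apply rule $\R{1}$, and must instead route through $\J{6}$ (turning $\Box\neg(\cdot)$ into $(\cdot)\rhd\bot$) and $\J{3}$ exactly as in the proof of Proposition~\ref{0P3}(2), splitting on $A \leftrightarrow (A\land B)$ and weakening $\bot \to B$ via $\R{1}$. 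I expect the final derivation to be three or four lines once that lemma is in hand.
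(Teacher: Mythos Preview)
Your forward direction is correct and matches the paper exactly.

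The backward direction, however, has a genuine gap. You start from the instance $A \rhd A$ of $\J{1}'$ and then try to push the right-hand side from $A$ to $B$ under the hypothesis $\Box(A \to B)$. The principle you end up needing for this, $\Box(A \to B) \to (C \rhd A \to C \rhd B)$, is precisely the axiom scheme $\J{4}_+$, and $\J{4}_+$ is \emph{not} derivable in $\IL^-$ (nor in $\IL^-(\J{1}')$; indeed $\IL^-(\J{1})$ and $\IL^-(\J{1},\J{4}_+)$ are distinct logics in the paper). Your attempted workaround via $\J{3}$ and $\J{6}$ mirrors the proof of Proposition~\ref{0P3}(2), but that argument works for the \emph{left} argument of $\rhd$ only: you can get as far as $C \rhd ((A \land \neg B) \lor B)$, and then there is no $\IL^-$-move that lets you drop the disjunct $A \land \neg B$ on the right using merely $\Box\neg(A \land \neg B)$. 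That step would again require $\J{4}_+$ (or $\J{2}_+$), so the argument is circular.

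The fix is to pick a different instance of $\J{1}'$: take $B \rhd B$ rather than $A \rhd A$. Then Proposition~\ref{0P3}(2) with $C := B$ gives $\IL^- \vdash \Box(A \to B) \to (B \rhd B \to A \rhd B)$, and since $\IL^-(\J{1}') \vdash B \rhd B$, Modus Ponens yields $\J{1}$. This is exactly the paper's proof, and it is a two-line argument once you choose the correct instance.
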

\begin{proof}
$\IL^-(\J{1}) \vdash \J{1}'$: 
This is because $\IL^- \vdash \Box(A \to A)$ and $\IL^-(\J{1}) \vdash \Box(A \to A) \to A \rhd A$. 

$\IL^-(\J{1}') \vdash \J{1}$: 
By Proposition \ref{0P3}.2, $\IL^- \vdash \Box(A \to B) \to (B \rhd B \to A \rhd B)$. 
Since $\IL^-(\J{1}') \vdash B \rhd B$, we obtain the desired result. 
\end{proof}

Therefore, in this paper, we sometimes identify the axiom schemata $\J{1}$ and $\J{1}'$. 
The following proposition is stated in Visser. 

\begin{prop}[Visser \cite{Vis88}]\label{1P2}
Let $F = \langle W, R, \{S_x\}_{x \in W} \rangle$ be any $\IL^-$-frame. 
Then the following are equivalent: 
\begin{enumerate}
	\item $\J{1}$ is valid in $F$. 
	\item $\forall x, y \in W(x R y \Rightarrow y S_x y)$. 
\end{enumerate}
\end{prop}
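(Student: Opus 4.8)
The plan is to prove the two implications separately, working directly with the axiom scheme $\J{1}$ (one could equally well use the equivalent scheme $\J{1}'$ via Proposition \ref{1P1}, but this is not needed).

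For the direction $(2) \Rightarrow (1)$, I would fix an arbitrary satisfaction relation $\Vdash$ on $F$, an arbitrary $x \in W$, and arbitrary formulas $A, B$. Assuming $x \Vdash \Box(A \to B)$, take any $y \in W$ with $x R y$ and $y \Vdash A$; then $y \Vdash A \to B$, hence $y \Vdash B$. By condition (2) we have $y S_x y$, so $y$ itself witnesses the existential quantifier in the satisfaction clause for $\rhd$, giving $x \Vdash A \rhd B$. Thus $\J{1}$ is valid in $F$. This step is routine.

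For the direction $(1) \Rightarrow (2)$, I would argue by contraposition. Suppose condition (2) fails, so there are $x, y \in W$ with $x R y$ but not $y S_x y$. I would then construct a satisfaction relation $\Vdash$ on $F$ falsifying an instance of $\J{1}$: fix a propositional variable $p$ and declare, for every $w \in W$, that $w \Vdash p$ if and only if $w = y$ (the values of the remaining variables are irrelevant). Consider the instance $\Box(p \to p) \to p \rhd p$. Since $p \to p$ holds at every world, $x \Vdash \Box(p \to p)$. On the other hand $x \not\Vdash p \rhd p$: indeed $x R y$ and $y \Vdash p$, yet for every $z$ with $y S_x z$ we have $z \neq y$ — otherwise $y S_x y$, contradicting our assumption — so $z \not\Vdash p$, and no witness for the existential exists. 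Hence this instance of $\J{1}$ is refuted at $x$, so $\J{1}$ is not valid in $F$.

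I do not expect a genuine obstacle here; the only point needing a moment's care is the observation that every $S_x$-successor of $y$ is distinct from $y$, which is precisely what the failure of $y S_x y$ supplies and which makes the single-point valuation concentrated at $y$ do the job. This is essentially the argument underlying Visser's statement.
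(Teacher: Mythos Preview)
Your proof is correct and follows essentially the same approach as the paper's. The only cosmetic differences are that the paper argues $(1 \Rightarrow 2)$ directly rather than by contraposition, and in $(2 \Rightarrow 1)$ it verifies the equivalent scheme $A \rhd A$ (i.e., $\J{1}'$) rather than $\Box(A \to B) \to A \rhd B$ as you do; the underlying valuation argument concentrated at $y$ is identical.
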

\begin{proof}
$(1 \Rightarrow 2)$: 
Assume that $\J{1}$ is valid in $F$. 
Suppose $x R y$. 
Let $\Vdash$ be any satisfaction relation on $F$ satisfying that for any $u \in W$, $u \Vdash p$ if and only if $u = y$ for some fixed propositional variable $p$. 
Then $y \Vdash p$. 
Since $x R y$ and $x \Vdash p \rhd p$, there exists a $z \in W$ such that $y S_x z$ and $z \Vdash p$. 
By the definition of $\Vdash$, $z = y$, and hence $y S_x y$. 

$(2 \Rightarrow 1)$: 
Assume $\forall x, y \in W(x R y \Rightarrow y S_x y)$. 
Let $y \in W$ be such that $x R y$ and $y \Vdash A$. 
Then $y S_x y$, and thus we conclude $x \Vdash A \rhd A$. 
\end{proof}

We prove a similar equivalence concerning $\ILS$-frames. 

\begin{prop}\label{1P3}
Let $F = \langle W, R, \{S_x\}_{x \in W} \rangle$ be any $\ILS$-frame. 
Then the following are equivalent: 
\begin{enumerate}
	\item $\J{1}$ is valid in $F$.  
	\item $\forall x, y \in W(x R y \Rightarrow y S_x \{y\})$. 
\end{enumerate}
\end{prop}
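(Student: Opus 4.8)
The plan is to mirror the proof of Proposition \ref{1P2}, adapting the two ideas there to the set-valued semantics: the characteristic valuation trick for the direction $(1 \Rightarrow 2)$, and a direct unwinding of the satisfaction clause for $\rhd$ for $(2 \Rightarrow 1)$. As a preliminary remark, note that since $\IL^-(\J{1})$ and $\IL^-(\J{1}')$ are deductively equivalent (Proposition \ref{1P1}), since $\IL^-$ is valid in all $\ILS$-frames (Proposition \ref{0P4}), and since Modus Ponens, Necessitation, $\R{1}$ and $\R{2}$ preserve validity in $\ILS$-frames (as shown in the proof of Proposition \ref{0P4}), the validity of $\J{1}$ in $F$ is equivalent to the validity of $\J{1}'$ (that is, $A \rhd A$) in $F$. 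I will use whichever of the two formulations is more convenient in each direction.

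For $(1 \Rightarrow 2)$: assume $\J{1}$, hence $\J{1}'$, is valid in $F$, and suppose $x R y$. Fix a propositional variable $p$ and take the satisfaction relation $\Vdash$ on $F$ determined by $u \Vdash p \iff u = y$. Then $y \Vdash p$, and since $x \Vdash p \rhd p$ and $x R y$, there is a nonempty $V \subseteq W$ with $y S_x V$ and $z \Vdash p$ for every $z \in V$. The last condition gives $V \subseteq \{y\}$, and nonemptiness of $V$ then forces $V = \{y\}$, so $y S_x \{y\}$. For $(2 \Rightarrow 1)$: assume $\forall x, y \in W(x R y \Rightarrow y S_x \{y\})$, let $\Vdash$ be an arbitrary satisfaction relation on $F$, let $x \in W$, and suppose $x \Vdash \Box(A \to B)$. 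For any $y$ with $x R y$ and $y \Vdash A$ we have $y \Vdash A \to B$, hence $y \Vdash B$; by hypothesis $y S_x \{y\}$, and every element of $\{y\}$ (namely $y$) forces $B$. Thus $x \Vdash A \rhd B$, so $\J{1}$ is valid in $F$.

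I do not anticipate a genuine obstacle here; the proof is short. The only points that deserve a moment's attention are that the witnessing set $V$ for $p \rhd p$ is required to be nonempty by the definition of $\ILS$-frame, so it must equal $\{y\}$ rather than some proper (empty) subset, and that Monotonicity (condition 4 of Definition \ref{Def:ILS}) plays no role in either direction, in keeping with the Remark following Proposition \ref{0P4}.
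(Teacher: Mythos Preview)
Your proof is correct and follows essentially the same approach as the paper: the characteristic valuation trick for $(1 \Rightarrow 2)$ and a direct verification of the $\rhd$-clause using $y S_x \{y\}$ for $(2 \Rightarrow 1)$. The only cosmetic difference is that in $(2 \Rightarrow 1)$ the paper verifies $\J{1}'$ (i.e., $A \rhd A$) directly rather than $\J{1}$, which makes your preliminary remark about the equivalence of their validity unnecessary there; but your argument is perfectly sound.
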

\begin{proof}
$(1 \Rightarrow 2)$: 
Assume that $\J{1}$ is valid in $F$. 
Suppose $x R y$. 
Let $\Vdash$ be a satisfaction relation on $F$ satisfying for any $u \in W$, $u \Vdash p$ if and only if $u = y$ for some fixed propositional variable $p$. 
Then $y \Vdash p$. 
Since $x R y$ and $x \Vdash p \rhd p$, there exists $V \subseteq W$ such that $y S_x V$ and $\forall z \in V (z \Vdash p)$. 
By the definition of $\Vdash$, $V = \{y\}$ because $V$ is non-empty. 
We obtain $y S_x \{y\}$. 

$(2 \Rightarrow 1)$: 
Assume $\forall x, y \in W(x R y \Rightarrow y S_x \{y\})$. 
Let $y \in W$ be such that $x R y$ and $y \Vdash A$. 
Then $y S_x \{y\}$ and $\forall z \in \{y\} (z \Vdash A)$. 
Thus we conclude $x \Vdash A \rhd A$. 
\end{proof}

\subsection{The axiom scheme $\J{4}$}

This subsection is devoted to investigating the axiom scheme $\J{4}$. 
\begin{description}
	\item [$\J{4}$] $(A \rhd B) \to (\Diamond A \to \Diamond B)$. 
\end{description}

First, we prove that $\J{4}$ is equivalent to the following axiom scheme $\J{4}'$ over $\IL^-$. 
The principle $\J{4}'$ is introduced in Visser \cite{Vis88}. 

\begin{description}
	\item [$\J{4}'$] $(A \rhd B) \to (B \rhd \bot \to A \rhd \bot)$. 
\end{description}

\begin{prop}\label{4P1}
The logics $\IL^-(\J{4})$ and $\IL^-(\J{4}')$ are deductively equivalent. 
\end{prop}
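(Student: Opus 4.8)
The plan is to prove the two inclusions $\IL^-(\J{4}) \vdash \J{4}'$ and $\IL^-(\J{4}') \vdash \J{4}$ separately, working purely syntactically since both $\J{4}$ and $\J{4}'$ are axiom schemata over the same base logic $\IL^-$. For the direction $\IL^-(\J{4}) \vdash \J{4}'$, I would note that $\J{4}'$ is essentially the instance of $\J{4}$ obtained by taking the consequent's diamonds against $\bot$: recall that by the axiom $\J{6}$ we have $\IL^- \vdash \Box \neg A \leftrightarrow A \rhd \bot$, equivalently $\IL^- \vdash \neg (A \rhd \bot) \leftrightarrow \Diamond A$. So under $\J{4}$, from $A \rhd B$ we get $\Diamond A \to \Diamond B$, which by this equivalence (applied to both $A$ and $B$) rewrites to $\neg(A \rhd \bot) \to \neg(B \rhd \bot)$, i.e. $B \rhd \bot \to A \rhd \bot$; this is exactly $\J{4}'$, and the rewriting steps are justified by Proposition~\ref{0P2} together with propositional logic.

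For the converse direction $\IL^-(\J{4}') \vdash \J{4}$, I would run the same computation in reverse. Starting from $A \rhd B$ and the instance $(A \rhd B) \to (B \rhd \bot \to A \rhd \bot)$ of $\J{4}'$, we obtain $B \rhd \bot \to A \rhd \bot$, hence by contraposition $\neg (A \rhd \bot) \to \neg (B \rhd \bot)$. Again using $\IL^- \vdash \neg (C \rhd \bot) \leftrightarrow \Diamond C$ (which follows from $\J{6}$ and $\IL^- \vdash \Diamond C \leftrightarrow \neg \Box \neg C$), we can replace $\neg (A \rhd \bot)$ by $\Diamond A$ and $\neg (B \rhd \bot)$ by $\Diamond B$, arriving at $\Diamond A \to \Diamond B$. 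Discharging the hypothesis $A \rhd B$ gives $(A \rhd B) \to (\Diamond A \to \Diamond B)$, which is $\J{4}$.

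The bookkeeping to be careful about is simply the chain of $\IL^-$-provable equivalences built on $\J{6}$: I would isolate the single auxiliary fact $\IL^- \vdash \Diamond C \leftrightarrow \neg (C \rhd \bot)$ (immediate from $\J{6}$ and the abbreviation $\Diamond C := \neg \Box \neg C$, replacing $A$ by $\neg C$ in $\J{6}$ to get $\Box \neg C \leftrightarrow (\neg \neg C) \rhd \bot$, then using $\IL^- \vdash \neg \neg C \leftrightarrow C$ and Proposition~\ref{0P2}) and then both directions are one-line propositional manipulations. There is no real obstacle here; the only mild subtlety is remembering that the substitution into $\J{6}$ produces $(\neg\neg C) \rhd \bot$ rather than $C \rhd \bot$, so Proposition~\ref{0P2} is genuinely needed to clean this up before the main argument.
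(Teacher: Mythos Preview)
Your proposal is correct and follows the same approach as the paper: the paper's proof is the one-liner ``This is because $\IL^- \vdash (\Diamond A \to \Diamond B) \leftrightarrow (B \rhd \bot \to A \rhd \bot)$ by $\J{6}$,'' which is exactly your auxiliary equivalence $\IL^- \vdash \Diamond C \leftrightarrow \neg(C \rhd \bot)$ applied to both $A$ and $B$ and then contraposed. Your version is simply a more detailed unpacking of the same argument, including the $\neg\neg C$ cleanup via Proposition~\ref{0P2} that the paper leaves implicit.
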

\begin{proof}
This is because $\IL^- \vdash (\Diamond A \to \Diamond B) \leftrightarrow (B \rhd \bot \to A \rhd \bot)$ by $\J{6}$. 
\end{proof}

Since $\J{4}'$ is a particular instance of the axiom scheme $\J{2}$, we obtain the following corollary. 

\begin{cor}\label{4C1}
$\IL^-(\J{2}) \vdash \J{4}$. 
\end{cor}

The axiom scheme $\J{4}$ does not behave well by itself in the sense of modal completeness. 
In fact, we will prove in Section \ref{Sec:Incompl} that for instance, $\IL^-(\J{4})$ is not complete with respect to corresponding class of $\IL^-$-frames. 
Thus we introduce a well-behaved axiom scheme $\J{4}_{+}$ whose corresponding class of $\IL^-$-frames is same as that of $\J{4}$. 
The principle $\J{4}_+$ is originally introduced in Visser \cite{Vis88}. 
We also introduce the schemata $\J{4}_+'$ and $\J{4}_+''$ as follows: 

\begin{description}
	\item [$\J{4}_+$] $\Box(A \to B) \to (C \rhd A \to C \rhd B)$. 
	\item [$\J{4}_+'$] $\Box A \to (C \rhd (A \to B) \to C \rhd B)$. 
	\item [$\J{4}_+''$] $\Box A \to (C \rhd B \to C \rhd (A \land B))$. 
\end{description}

\begin{prop}\label{4P2}
The logics $\IL^-(\J{4}_+)$, $\IL^-(\J{4}_+')$ and $\IL^-(\J{4}_+'')$ are deductively equivalent. 
\end{prop}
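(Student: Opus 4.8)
The plan is to prove the three logics coincide by establishing a cycle of implications $\IL^-(\J{4}_+) \vdash \J{4}_+'$, $\IL^-(\J{4}_+') \vdash \J{4}_+''$, and $\IL^-(\J{4}_+'') \vdash \J{4}_+$, each time using only $\IL^-$-reasoning (in particular the rules $\R{1}$, $\R{2}$, Proposition \ref{0P2}, and the strengthened forms in Proposition \ref{0P3}).

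First I would show $\IL^-(\J{4}_+) \vdash \J{4}_+'$. Observe that $\IL^- \vdash \Box A \to \Box\big((A \to B) \to B\big)$ by normality. Applying $\J{4}_+$ with the formula $A \to B$ in place of ``$A$'' and $B$ in place of ``$B$'', we get $\IL^- \vdash \Box\big((A \to B) \to B\big) \to \big(C \rhd (A \to B) \to C \rhd B\big)$. Chaining these gives $\J{4}_+'$.

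Next, $\IL^-(\J{4}_+') \vdash \J{4}_+''$. Here I would use $\J{4}_+'$ in the following way: from $\IL^- \vdash \Box A \to \Box\big(B \to (A \land B)\big)$ (again by normality) and $\J{4}_+'$ instantiated with $B \to (A \land B)$ playing the role of ``$A \to B$'', i.e.\ with the consequent ``$B$'' taken to be $A \land B$, we obtain $\IL^- \vdash \Box A \to \big(C \rhd B \to C \rhd (A \land B)\big)$, which is $\J{4}_+''$. One must check that $C \rhd \big(B \to (A\land B)\big)$ and $C \rhd B$ are suitably related: actually the cleaner route is $\Box A \to \Box\big(B \leftrightarrow (A \land B)\big)$ together with Proposition \ref{0P2}, giving $C \rhd B \leftrightarrow C \rhd (A \land B)$ under the hypothesis $\Box A$, but one has to be careful since Proposition \ref{0P2} needs an outright theorem-equivalence; so instead apply $\R{1}$ to the (non-theorem but $\Box A$-consequent) implication via $\J{4}_+'$ as stated.

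Finally, $\IL^-(\J{4}_+'') \vdash \J{4}_+$. Given $\Box(A \to B)$, first note $\IL^- \vdash \Box(A \to B) \to \Box\big(A \leftrightarrow (A \land B)\big)$. From $\J{4}_+''$ we get $\Box(A\to B) \to \big(C \rhd A \to C \rhd (A \land B)\big)$ (using that $\Box(A\to B) \to \Box A'$ for the relevant conjunct, or directly instantiating $\J{4}_+''$). Then since $\IL^- \vdash (A \land B) \to B$, the rule $\R{1}$ yields $\IL^- \vdash C \rhd (A \land B) \to C \rhd B$. Composing, $\IL^-(\J{4}_+'') \vdash \Box(A \to B) \to (C \rhd A \to C \rhd B)$, which is $\J{4}_+$.

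The arguments are short and essentially bookkeeping with normality of $\Box$ and the monotonicity rule $\R{1}$; the only delicate point is not to appeal to Proposition \ref{0P2} in contexts where the equivalence holds only under a $\Box$-hypothesis rather than as a theorem, so the main care is to route each step through $\R{1}$ (or $\R{2}$) applied to a genuine $\IL^-$-theorem of the form $X \to Y$ and then discharge the box-hypothesis by normality. I expect the $\IL^-(\J{4}_+') \vdash \J{4}_+''$ direction to be the one requiring the most attention for exactly this reason.
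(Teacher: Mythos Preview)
Your cycle $\J{4}_+ \Rightarrow \J{4}_+' \Rightarrow \J{4}_+'' \Rightarrow \J{4}_+$ is exactly the paper's approach, and your first and third steps match it (in step~3 the intermediate formula should read $(A\to B)\land A$ rather than $A\land B$, but since $(A\to B)\land A \to B$ is a tautology your $\R{1}$ step still goes through).

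Your second step, however, is not correct as written. Instantiating $\J{4}_+'$ so that ``$A\to B$'' becomes $B\to(A\land B)$ forces the outer ``$\Box A$'' to become $\Box B$, and you only have $\Box A$ available; moreover $C\rhd B$ does not imply $C\rhd(B\to(A\land B))$ via $\R{1}$, since $B\to\bigl(B\to(A\land B)\bigr)$ is not a tautology. The paper's fix is to keep the variable $A$ fixed and only replace ``$B$'' by $A\land B$ in $\J{4}_+'$, yielding
\[
\Box A \to \bigl(C\rhd(A\to(A\land B)) \to C\rhd(A\land B)\bigr).
\]
Now $B\to\bigl(A\to(A\land B)\bigr)$ \emph{is} a tautology, so $\R{1}$ gives $C\rhd B \to C\rhd(A\to(A\land B))$ outright, and chaining finishes. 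Your caution about not applying Proposition~\ref{0P2} under a hypothesis is well placed, but the remedy is this choice of intermediate formula, not ``applying $\R{1}$ to a non-theorem'', which is not a legal move.
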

\begin{proof}
$\IL^-(\J{4}_+) \vdash \J{4}_+'$: 
Since $\IL^- \vdash A \to ((A \to B) \to B)$, $\IL^- \vdash \Box A \to \Box((A \to B) \to B)$. 
Then we have $\IL^-(\J{4}_+) \vdash \Box A \to (C \rhd (A \to B) \to C \rhd B)$. 

$\IL^-(\J{4}_+') \vdash \J{4}_+''$: 
Since $\IL^- \vdash B \to (A \to A \land B)$, $\IL^- \vdash C \rhd B \to C \rhd (A \to A \land B)$ by the rule $\R{1}$. 
Then $\IL^-(\J{4}_+') \vdash \Box A \to (C \rhd B \to C \rhd (A \land B))$. 

$\IL^-(\J{4}_+'') \vdash \J{4}_+$: 
By the axiom $\J{4}_+''$, we have $\IL^-(\J{4}_+'') \vdash \Box (A \to B) \to (C \rhd A \to C \rhd ((A \to B) \land A))$. 
Since $\IL^- \vdash (A \to B) \land A \to B$, we have $\IL^- \vdash C \rhd ((A \to B) \land A) \to C \rhd B$ by the rule $\R{1}$. 
Thus $\IL^-(\J{4}_+'') \vdash \Box(A \to B) \to (C \rhd A \to C \rhd B)$. 
\end{proof}

The axiom scheme $\J{4}_+$ is a strengthening of the inference rule $\R{1}$, and hence in extensions of $\IL^-(\J{4}_+)$, the inference rule $\R{1}$ is redundant. 

We show that $\J{4}_+$ implies $\J{4}$ over $\IL^-$. 

\begin{prop}\label{4P3}
$\IL^-(\J{4}_+) \vdash \J{4}$. 
\end{prop}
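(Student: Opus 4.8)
The plan is to derive $\J{4}$, equivalently $\J{4}'$ by Proposition~\ref{4P1}, from $\IL^-(\J{4}_+)$. Recall that $\J{4}'$ is the scheme $(A \rhd B) \to (B \rhd \bot \to A \rhd \bot)$. So I would argue purely syntactically in $\IL^-(\J{4}_+)$, aiming to show $\IL^-(\J{4}_+) \vdash (A \rhd B) \land (B \rhd \bot) \to A \rhd \bot$.

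First I would use the axiom $\J{6}$, which gives $\IL^- \vdash (B \rhd \bot) \to \Box \neg B$, i.e. $\Box(B \to \bot)$. Then I would apply $\J{4}_+$ in the form $\Box(B \to \bot) \to (A \rhd B \to A \rhd \bot)$. Combining these two implications yields $\IL^-(\J{4}_+) \vdash (B \rhd \bot) \to (A \rhd B \to A \rhd \bot)$, which rearranges to exactly $\J{4}'$. Then Proposition~\ref{4P1} converts this to $\J{4}$, completing the proof. Alternatively, one can avoid $\J{4}'$ and work directly: from $A \rhd B$ and $\Box \neg B$ (which is $\neg \Diamond B$) we get $A \rhd \bot$ by the above, hence $\Box \neg A$ by $\J{6}$, i.e. $\neg \Diamond A$; contraposing gives $\Diamond A \to \Diamond B$ under the hypothesis $A \rhd B$, which is $\J{4}$.

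I do not anticipate a serious obstacle here: the argument is a short chain of propositional reasoning together with one application each of $\J{6}$ and $\J{4}_+$. The only point requiring a little care is the bookkeeping between the $\Box \neg B$ / $\Box(B \to \bot)$ presentations and the $\Diamond$-free reformulation, but since $\IL^- \vdash \Box \neg B \leftrightarrow \Box(B \to \bot)$ (propositional logic inside the box plus $\G{2}$ and Necessitation) and $\Diamond B$ abbreviates $\neg\Box\neg B$, this is routine. If I wanted to bypass Proposition~\ref{4P1} entirely I would simply present the direct derivation of $\Diamond A \to \Diamond B$ from $A \rhd B$ as sketched above.
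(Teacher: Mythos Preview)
Your proposal is correct and essentially identical to the paper's own proof: the paper also uses $\J{6}$ to pass from $B \rhd \bot$ to $\Box(B \to \bot)$, applies $\J{4}_+$ to obtain $A \rhd B \to (B \rhd \bot \to A \rhd \bot)$, and then invokes Proposition~\ref{4P1} to conclude $\J{4}$.
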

\begin{proof}
Since $\IL^- \vdash B \rhd \bot \to \Box \neg B$ by $\J{6}$, $\IL^- \vdash B \rhd \bot \to \Box (B \to \bot)$. 
Then by $\J{4}_+$, we have $\IL^-(\J{4}_+) \vdash A \rhd B \to (B \rhd \bot \to A \rhd \bot)$. 
By Proposition \ref{4P1}, we conclude $\IL^-(\J{4}_+) \vdash \J{4}$. 
\end{proof}

We prove that $\J{4}$ and $\J{4}_+$ have the same frame condition with respect to $\IL^-$-frames. 
This is stated in Visser \cite{Vis88}.  

\begin{prop}[Visser \cite{Vis88}]\label{4P4}
Let $F = \langle W, R, \{S_x\}_{x \in W} \rangle$ be any $\IL^-$-frame. 
Then the following are equivalent:
\begin{enumerate}
	\item $\J{4}_+$ is valid in $F$.  
	\item $\J{4}$ is valid in $F$.  
	\item $\forall x, y, z \in W(y S_x z \Rightarrow x R z)$.  
\end{enumerate}
\end{prop}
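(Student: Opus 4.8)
The plan is to prove the three equivalences by establishing the cycle $(1 \Rightarrow 2)$, $(2 \Rightarrow 3)$, $(3 \Rightarrow 1)$. Implication $(1 \Rightarrow 2)$ is immediate from Proposition \ref{4P3}, which gives $\IL^-(\J{4}_+) \vdash \J{4}$, so validity of $\J{4}_+$ in $F$ forces validity of $\J{4}$ in $F$.

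For $(2 \Rightarrow 3)$, I would argue by contraposition in the style of the $(1 \Rightarrow 2)$ direction of Proposition \ref{1P2}. Suppose there exist $x, y, z \in W$ with $y S_x z$ but not $x R z$. The aim is to build a satisfaction relation $\Vdash$ on $F$ and a point witnessing the failure of an instance of $\J{4}$. Take a fresh propositional variable $p$ and set $u \Vdash p$ iff $u = y$. Then $x \Vdash p \rhd p$: indeed, the only $R$-successor of $x$ satisfying $p$ is $y$, and $y S_x z$ — wait, we need a successor satisfying $p$, and $z$ need not satisfy $p$. So instead I would use two variables, or better: choose $p$ true exactly at $y$ and observe $x \Vdash p \rhd \top$ trivially; that is not an instance of $\J{4}$ either. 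The right move is the standard one: let $p$ hold exactly at $y$ and $q$ hold exactly at $z$ (with $q$ false at $y$, which is fine since $y S_x z$ and in a conversely well-founded frame $z \neq y$ whenever... actually we just need $x R z$ to fail, and if $z = y$ then $x R y$ would give $x R z$, contradiction, so $z \neq y$). Then $x \Vdash p \rhd q$ because $y$ is the unique $p$-point below $x$ and $y S_x z$ with $z \Vdash q$. Also $x \Vdash \Diamond p$ since $x R y$. If $\J{4}$ held at $x$ we would get $x \Vdash \Diamond q$, i.e.\ some $R$-successor of $x$ satisfies $q$; but the only $q$-point is $z$ and $x R z$ fails — contradiction. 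Hence $\J{4}$ is not valid in $F$, completing the contrapositive. One must double-check that $z \Vdash q$ and $z \not\Vdash p$ are consistent, which holds since $z \neq y$.

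For $(3 \Rightarrow 1)$, I would show directly that $\J{4}_+$ is valid under the frame condition. Fix a satisfaction relation $\Vdash$ and a point $x$ with $x \Vdash \Box(A \to B)$ and $x \Vdash C \rhd A$; let $y$ be such that $x R y$ and $y \Vdash C$. Then there is $z$ with $y S_x z$ and $z \Vdash A$. By condition 3, $x R z$, so from $x \Vdash \Box(A \to B)$ we get $z \Vdash A \to B$, hence $z \Vdash B$. Thus $y S_x z$ and $z \Vdash B$, giving $x \Vdash C \rhd B$. This establishes validity of $\J{4}_+$, closing the cycle.

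The main obstacle is the countermodel construction in $(2 \Rightarrow 3)$: one must be careful that the chosen valuation makes $p \rhd q$ true at $x$ while keeping $\Diamond q$ false there, which requires noting $z \neq y$ (forced by the failure of $x R z$ together with $x R y$) so that the valuations of $p$ and $q$ can be assigned independently on $\{y, z\}$. Everything else is a routine unwinding of the satisfaction clauses, and I would present $(1 \Rightarrow 2)$ as a one-line appeal to Proposition \ref{4P3}.
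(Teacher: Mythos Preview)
Your proof is correct and follows the paper's cycle $(1 \Rightarrow 2 \Rightarrow 3 \Rightarrow 1)$ with the same two-variable valuation trick for $(2 \Rightarrow 3)$, differing only cosmetically by arguing that step via contraposition rather than directly (your side remark that $z \neq y$ is forced by $x R y$ together with $\lnot(x R z)$ is exactly what makes the contrapositive go through). In fact your $(3 \Rightarrow 1)$ verifies $\J{4}_+$ as required, whereas the paper's printed argument for that implication actually establishes only $\J{4}$ (an apparent slip in the text), so your version is the one needed to close the cycle.
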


\begin{proof}
$(1 \Rightarrow 2)$: By Proposition \ref{4P3}. 

$(2 \Rightarrow 3)$: Assume that $\mathbf{J4}$ is valid in $F$. Suppose $y S_{x} z$. 
Let $\Vdash$ be a satisfaction relation on $F$ such that for any $u \in W$, $u \Vdash p$ if and only if $u=y$, and $u \Vdash q$ if and only if $u = z$ for some fixed propositional variables $p$ and $q$. 
Then $x \Vdash p \rhd q$ by the definition of $\Vdash$ and our supposition. 
Since $x R y$ and $y \Vdash p$, we have $x \Vdash \Diamond p$. 
Then by the validity of $\J{4}$, $x \Vdash \Diamond q$. 
Hence there exists $u \in W$ such that $x R u$ and $u \Vdash q$. 
By the definition of $\Vdash$, we obtain $x R z$.

$(3 \Rightarrow 1)$: Assume that $\forall x, y, z \in W(y S_x z \Rightarrow x R z)$. 
Suppose $x \Vdash A \rhd B$ and $x \Vdash \Diamond A$. 
Then there exists $y \in W$ such that $xRy$ and $y \Vdash A$ and hence there exists $z \in W$ such that $yS_{x}z$ and $z \Vdash B$. 
By the assumption $x R z$ and therefore we obtain $x \Vdash \Diamond B$. 
That is, $\J{4_{+}}$ is valid in $F$. 
\end{proof}

On the other hand, $\J{4}$ and $\J{4}_+$ can be distinguished by considering $\ILS$-frames. 
That is, these logics have different frame conditions with respect to $\ILS$-frames. 

\begin{prop}\label{4P5}
Let $F = \langle W, R, \{S_x\}_{x \in W} \rangle$ be any $\ILS$-frame. 
Then the following are equivalent: 
\begin{enumerate}
	\item $\J{4}$ is valid in $F$.  
	\item $\forall x, y \in W, \forall V \subseteq W (y S_x V \Rightarrow \exists z \in V (x R z))$. 
\end{enumerate}
\end{prop}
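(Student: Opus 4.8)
The plan is to prove the equivalence by a direct argument in both directions, mimicking the structure of the proof of Proposition \ref{4P4} but adapting the semantic clause for $\rhd$ to the generalized setting. Recall that $x \Vdash A \rhd B$ in an $\ILS$-model means that every $A$-successor $y$ of $x$ has a choice set $V$ with $y S_x V$ and $B$ true throughout $V$, and that $\J{4}$ is equivalent over $\IL^-$ to $\J{4}'$, namely $(A \rhd B) \to (B \rhd \bot \to A \rhd \bot)$, by Proposition \ref{4P1}; since in any $\ILS$-model $x \Vdash B \rhd \bot$ iff $x \Vdash \Box \neg B$ by the computation in the proof of Proposition \ref{0P4}, the principle $\J{4}$ is valid in $F$ precisely when for every satisfaction relation and every $x$, $x \Vdash A \rhd B$ and $x \Vdash \Box \neg B$ together imply $x \Vdash \Box \neg A$.

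For $(2 \Rightarrow 1)$ I would argue contrapositively at the semantic level: suppose the frame condition holds, fix a satisfaction relation $\Vdash$ and a point $x$, assume $x \Vdash A \rhd B$ and $x \Vdash \Diamond A$, and take $y \in W$ with $x R y$ and $y \Vdash A$. Then there is $V \subseteq W$ with $y S_x V$ and $z \Vdash B$ for all $z \in V$; the condition yields some $z \in V$ with $x R z$, and that $z$ witnesses $x \Vdash \Diamond B$. Hence $\J{4}$ is valid in $F$.

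For $(1 \Rightarrow 2)$ I would contrapose and build a falsifying model. Assume the frame condition fails, so there exist $x, y \in W$ and $V \subseteq W$ with $y S_x V$ but $x R z$ fails for every $z \in V$ (in particular $x R y$ by clause 3 of Definition \ref{Def:ILS}). Choose fresh propositional variables $p$ and $q$ and define $\Vdash$ so that $u \Vdash p$ iff $u = y$ and $u \Vdash q$ iff $u \in V$. Monotonicity gives $u S_x U$ for every $U \supseteq V$, so every $p$-successor of $x$ (namely $y$ itself, and only $y$) has the choice set $V$ on which $q$ holds everywhere; thus $x \Vdash p \rhd q$. Also $x R y$ and $y \Vdash p$ give $x \Vdash \Diamond p$. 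But no element of $R[x]$ satisfies $q$, since $q$ holds only on $V$ and $V \cap R[x] = \emptyset$; hence $x \not\Vdash \Diamond q$, so $x \not\Vdash (p \rhd q) \to (\Diamond p \to \Diamond q)$, and $\J{4}$ is not valid in $F$.

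The only subtle point — and the place I would be most careful — is checking in the $(1 \Rightarrow 2)$ direction that $x$ really does satisfy $p \rhd q$: one must confirm there is no $p$-successor of $x$ other than $y$ (immediate from the definition of the valuation, since $p$ is true only at $y$), and that the required nonempty choice set for $y$ exists, which is exactly where $y S_x V$ together with $V \neq \emptyset$ is used (and why $V$ being a legitimate nonempty subset matters). No other clause of Definition \ref{Def:ILS}, not even Monotonicity strictly, is needed beyond this, but it is worth noting explicitly that $V$ itself serves as the witness so Monotonicity is not actually invoked here.
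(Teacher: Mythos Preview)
Your proof is correct and follows essentially the same approach as the paper: the same valuation (with $p$ true only at $y$ and $q$ true exactly on $V$) is used for the frame-to-validity direction, and the same direct semantic unwinding is used for the other direction. The only cosmetic differences are that you frame $(1 \Rightarrow 2)$ contrapositively while the paper argues it directly, and your preliminary remarks on $\J{4}'$ and Monotonicity are not actually needed (as you yourself note).
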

\begin{proof}
$(1 \Rightarrow 2)$: 
Assume that $\J{4}$ is valid in $F$, and suppose $y S_x V$. 
Let $\Vdash$ be a satisfaction relation on $F$ such that for any $u \in W$, $u \Vdash p$ if and only if $u = y$, and $u \Vdash q$ if and only if $u \in V$ for some fixed propositional variables $p$ and $q$. 
Then $x \Vdash p \rhd q$ because $V$ is non-empty. 
Since $x R y$ and $y \Vdash p$, we have $x \Vdash \Diamond p$. 
Then by the validity of $\J{4}$, $x \Vdash \Diamond q$. 
Hence there exists $z \in W$ such that $x R z$ and $z \Vdash q$. 
By the definition of $\Vdash$, we obtain $z \in V$. 

$(2 \Rightarrow 1)$:
Assume $\forall x, y \in W, \forall V \subseteq W(y S_x V \Rightarrow \exists z \in V (x R z))$. 
Suppose $x \Vdash (A \rhd B) \land \Diamond A$. 
Then there exists $y \in W$ such that $x R y$ and $y \Vdash A$, and also there exists a $V \subseteq W$ such that $y S_x V$ and $\forall z \in V (z \Vdash B)$. 
By the assumption, $x R z$ for some $z \in V$. 
Hence $x \Vdash \Diamond B$. 
This shows that $\J{4}$ is valid in $F$. 
\end{proof}

\begin{prop}\label{4P6}
Let $F = \langle W, R, \{S_x\}_{x \in W} \rangle$ be any $\ILS$-frame. 
Then the following are equivalent: 
\begin{enumerate}
	\item $\J{4}_+$ is valid in $F$.  
	\item $\forall x, y \in W, \forall V \subseteq W(y S_x V \Rightarrow y S_x (V \cap R[x]))$. 
\end{enumerate}
\end{prop}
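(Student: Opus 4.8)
The plan is to establish both implications by the usual frame-correspondence argument, building valuations out of fresh propositional variables and reading off the Kripke truth conditions; no induction or auxiliary construction is needed.

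For the direction $(2 \Rightarrow 1)$, I would fix an $\ILS$-model on $F$ with $x \Vdash \Box(A \to B)$ and $x \Vdash C \rhd A$, take any $y$ with $x R y$ and $y \Vdash C$, and choose $V$ with $y S_x V$ and $z \Vdash A$ for all $z \in V$. Condition 2 gives $y S_x (V \cap R[x])$; note that this already forces $V \cap R[x] \neq \emptyset$, since by definition $S_x$ relates $y$ only to non-empty sets. Each $z \in V \cap R[x]$ then satisfies $A$ (being in $V$) and $A \to B$ (being $R$-accessible from $x$, where $\Box(A\to B)$ holds), hence $B$; so $V \cap R[x]$ witnesses $x \Vdash C \rhd B$, and $\J{4}_+$ is valid.

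For $(1 \Rightarrow 2)$, assume $\J{4}_+$ is valid and suppose $y S_x V$; by condition 3 of $\ILS$-frames, $x R y$. Pick three distinct propositional variables $p, q, r$ and the satisfaction relation with $u \Vdash p \iff u = y$, $u \Vdash q \iff u \in V$, and $u \Vdash r \iff u \in V \cap R[x]$. Then $x \Vdash p \rhd q$ (take $V$ itself as the witnessing set) and $x \Vdash \Box(q \to r)$ (if $x R u$ and $u \in V$ then $u \in V \cap R[x]$). Instantiating $\J{4}_+$ with $A := q$, $B := r$, $C := p$ yields $x \Vdash p \rhd r$; applying this at $y$ (which satisfies $p$ and is $R$-accessible from $x$) produces a non-empty $U$ with $y S_x U$ and $U \subseteq V \cap R[x]$, and then Monotonicity gives $y S_x (V \cap R[x])$.

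There is no substantial obstacle here; the only points to watch are the implicit non-emptiness constraint on the second coordinate of $S_x$ (which is exactly why condition 2 can be phrased as an outright relation $y S_x (V \cap R[x])$ rather than merely asserting the existence of some subset of $V \cap R[x]$ related to $y$) and the appeal to Monotonicity in the final step, which---in contrast with the situation of Proposition \ref{4P5}---is genuinely needed for this correspondence.
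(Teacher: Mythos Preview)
Your proposal is correct and follows essentially the same route as the paper's proof: for $(1 \Rightarrow 2)$ both you and the paper construct a valuation with $p$ true only at $y$, $q$ true on $V$, and $r$ true on $V \cap R[x]$, derive $x \Vdash p \rhd r$ from the validity of $\J{4}_+$, obtain $U \subseteq V \cap R[x]$ with $y S_x U$, and finish via Monotonicity; for $(2 \Rightarrow 1)$ both argue directly that $V \cap R[x]$ is a witnessing set for $C \rhd B$. Your remarks on the implicit non-emptiness of $V \cap R[x]$ and on the genuine use of Monotonicity are also in line with the paper's treatment.
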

\begin{proof}
$(1 \Rightarrow 2)$: 
Assume that $\J{4}_+$ is valid in $F$. 
Suppose $y S_x V$. 
Let $\Vdash$ be a satisfaction relation on $F$ such that for any $u \in W$, $u \Vdash p$ if and only if $u = y$, $u \Vdash q$ if and only if $u \in V$, and $u \Vdash r$ if and only if ($u \in V$ and $x R u$), for some fixed propositional variables $p, q$ and $r$. 
Then $x \Vdash p \rhd q$ because $V$ is non-empty. 
Let $y \in W$ be any element such that $x R y$ and $y \Vdash q$, then $y \in V$ and $x R y$. 
This means $y \Vdash r$. 
Therefore $x \Vdash \Box (q \to r)$. 
By the validity of $\J{4}_+$, we obtain $x \Vdash p \rhd r$. 
Since $x R y$ and $y \Vdash p$, there exists a $U \subseteq W$ such that $y S_x U$ and $\forall z \in U(z \Vdash r)$. 
By the definition of $\Vdash$, for each $z \in U$, $z \in V$ and $x R z$. 
That is, $U \subseteq V \cap R[x]$. 
By Monotonicity, we conclude $y S_x (V \cap R[x])$. 

$(2 \Rightarrow 1)$: Assume $\forall x, y \in W, \forall V \subseteq W(y S_x V \Rightarrow y S_x (V \cap R[x]))$. 
Suppose $x \Vdash (A \rhd B) \land \Box(B \to C)$. 
Let $y \in W$ be such that $x R y$ and $y \Vdash A$, then there exists a $V \subseteq W$ such that $y S_x V$ and $\forall z \in V (z \Vdash B)$. 
By the assumption, $y S_x (V \cap R[x])$. 
In particular, for each $z \in V \cap R[x]$, $z \Vdash B$ and $z \Vdash B \to C$, and hence $z \Vdash C$. 
We have shown $x \Vdash A \rhd C$. 
Therefore $\J{4}_+$ is valid in $F$. 
\end{proof}

By Proposition \ref{4P6}, when we consider logics containing $\J{4}_+$, for each $\ILS$-frame $\langle W, R, \{S_x\}_{x \in W} \rangle$, we may assume that for every $x \in W$, $S_x$ is a relation on $R[x] \times (\mathcal{P}(R[x]) \setminus \{\emptyset\})$. 
This is required in the usual definition of $\IL_{\textrm{set}}$-frames (see \cite{Vuk96,Joo98}).  

\subsection{The axiom scheme $\J{2}$}

In this subsection, we discuss the axiom scheme $\J{2}$.
\begin{description}
	\item [$\J{2}$] $(A \rhd B) \land (B \rhd C) \to A \rhd C$. 
\end{description}

As in the case of the axiom $\J{4}$, we introduce the following new axiom schemata $\J{2}_+$ and $\J{2}_+'$ which are stronger than $\J{2}$. 

\begin{description}
	\item [$\J{2}_+$] $(A \rhd (B \lor C)) \land (B \rhd C) \to A \rhd C$. 
	\item [$\J{2}_+'$] $(A \rhd B) \land ((B \land \neg C) \rhd C) \to A \rhd C$. 
\end{description}

\begin{prop}\label{2P1}
The logics $\IL^-(\J{2}_+)$ and $\IL^-(\J{2}_+')$ are deductively equivalent. 
\end{prop}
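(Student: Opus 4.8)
The plan is to prove the two derivability statements $\IL^-(\J{2}_+) \vdash \J{2}_+'$ and $\IL^-(\J{2}_+') \vdash \J{2}_+$ separately. In each direction I would apply the available schema to a single well-chosen substitution instance and then reshape the antecedents using the rules $\R{1}$ and $\R{2}$ (invoking Proposition \ref{0P2} freely). Everything rests on two classical tautologies that are therefore theorems of $\IL^-$, namely $B \to ((B \land \neg C) \lor C)$ and $((B \lor C) \land \neg C) \to B$; these are exactly what $\R{1}$ and $\R{2}$ need as hypotheses.

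For $\IL^-(\J{2}_+) \vdash \J{2}_+'$ I would argue as follows. From $\IL^- \vdash B \to ((B \land \neg C) \lor C)$ the rule $\R{1}$ gives $\IL^- \vdash (A \rhd B) \to (A \rhd ((B \land \neg C) \lor C))$. Now instantiate $\J{2}_+$ with $A := A$, $B := B \land \neg C$, $C := C$, obtaining $\IL^-(\J{2}_+) \vdash (A \rhd ((B \land \neg C) \lor C)) \land ((B \land \neg C) \rhd C) \to A \rhd C$. Combining the two displayed facts yields $\IL^-(\J{2}_+) \vdash (A \rhd B) \land ((B \land \neg C) \rhd C) \to A \rhd C$, which is $\J{2}_+'$.

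For the converse $\IL^-(\J{2}_+') \vdash \J{2}_+$ I would dualize the argument via $\R{2}$. From $\IL^- \vdash ((B \lor C) \land \neg C) \to B$ the rule $\R{2}$ gives $\IL^- \vdash (B \rhd C) \to (((B \lor C) \land \neg C) \rhd C)$. Now instantiate $\J{2}_+'$ with $A := A$, $B := B \lor C$, $C := C$, obtaining $\IL^-(\J{2}_+') \vdash (A \rhd (B \lor C)) \land (((B \lor C) \land \neg C) \rhd C) \to A \rhd C$. Combining gives $\IL^-(\J{2}_+') \vdash (A \rhd (B \lor C)) \land (B \rhd C) \to A \rhd C$, which is $\J{2}_+$.

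There is no real obstacle here: the only genuine content is recognizing which instances to use, i.e.\ that $B \land \neg C$ is the formula to substitute for the middle formula of $\J{2}_+$ in the first direction and $B \lor C$ is the formula to substitute for the middle formula of $\J{2}_+'$ in the second, after which $\R{1}$ (adjusting the consequent of a $\rhd$ upward along an implication) and $\R{2}$ (adjusting the antecedent downward) discharge the remaining side conditions automatically.
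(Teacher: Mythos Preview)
Your proposal is correct and follows essentially the same approach as the paper: in each direction the paper also uses the tautologies $B \to (B \land \neg C) \lor C$ and $(B \lor C) \land \neg C \to B$ together with $\R{1}$ and $\R{2}$, and then applies the appropriate instance of $\J{2}_+$ (resp.\ $\J{2}_+'$) with $B \land \neg C$ (resp.\ $B \lor C$) in the middle position.
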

\begin{proof}
$\IL^-(\J{2}_+) \vdash \J{2}_+'$: 
Since $\IL^- \vdash B \to (B \land \neg C) \lor C$, we have $\IL^- \vdash A \rhd B \to A \rhd ((B \land \neg C) \lor C)$ by the rule $\R{1}$. 
Then $\IL^- \vdash (A \rhd B) \land ((B \land \neg C) \rhd C) \to (A \rhd ((B \land \neg C) \lor C)) \land ((B \land \neg C) \rhd C)$. 
Thus $\IL^-(\J{2}_+) \vdash (A \rhd B) \land ((B \land \neg C) \rhd C) \to A \rhd C$. 

$\IL^-(\J{2}_+') \vdash \J{2}_+$: 
Since $\IL^- \vdash (B \lor C) \land \neg C \to B$, $\IL^- \vdash B \rhd C \to ((B \lor C) \land \neg C) \rhd C$ by the rule $\R{2}$. 
Then $\IL^- \vdash (A \rhd (B \lor C)) \land (B \rhd C) \to (A \rhd (B \lor C)) \land ((B \lor C) \land \neg C) \rhd C$. 
Therefore we conclude $\IL^-(\J{2}_+') \vdash (A \rhd (B \lor C)) \land (B \rhd C) \to A \rhd C$. 
\end{proof}

The axiom scheme $\J{2}_+$ is slightly stronger than $\J{2}$. 
In fact, the following proposition shows that $\J{2}$ and $\J{2}_+$ are equivalent over the logic $\IL^-(\J{1})$. 

\begin{prop}\label{2P2}\leavevmode
\begin{enumerate}
	\item $\IL^-(\J{2}_+) \vdash \J{2}$. 
	\item $\IL^-(\J{1}, \J{2}) \vdash \J{2}_+$. 
\end{enumerate}
\end{prop}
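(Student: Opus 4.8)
The plan is to prove both inclusions by short chains of propositional reasoning about $\rhd$, using only the rules $\R{1}$, $\R{2}$ and the axiom $\J{3}$ already available in $\IL^-$, together with Proposition \ref{1P1}.

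For item 1, I would first observe that since $\IL^- \vdash B \to B \lor C$, the rule $\R{1}$ gives $\IL^- \vdash (A \rhd B) \to A \rhd (B \lor C)$. Consequently $\IL^-(\J{2}_+) \vdash (A \rhd B) \land (B \rhd C) \to \bigl(A \rhd (B \lor C)\bigr) \land (B \rhd C)$, and one application of the axiom $\J{2}_+$ yields $\IL^-(\J{2}_+) \vdash (A \rhd B) \land (B \rhd C) \to A \rhd C$, which is exactly $\J{2}$.

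For item 2, the key step is to derive $(B \lor C) \rhd C$ from the hypothesis $B \rhd C$. By Proposition \ref{1P1} we have $\IL^-(\J{1}) \vdash C \rhd C$, so combining this with $B \rhd C$ via the axiom $\J{3}$ gives $\IL^-(\J{1}) \vdash (B \rhd C) \to (B \lor C) \rhd C$. Now, assuming $\bigl(A \rhd (B \lor C)\bigr) \land (B \rhd C)$, we have both $A \rhd (B \lor C)$ and $(B \lor C) \rhd C$, so a single application of $\J{2}$ delivers $A \rhd C$. Hence $\IL^-(\J{1}, \J{2}) \vdash \J{2}_+$.

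Neither direction presents a real obstacle; both are a few lines. The only point that deserves care is that item 2 genuinely uses $\J{1}$ — specifically the instance $C \rhd C$ — so one should not expect $\J{2}_+$ to follow from $\J{2}$ over $\IL^-$ alone. This is consistent with the remark in the text that $\J{2}$ and $\J{2}_+$ become equivalent precisely over $\IL^-(\J{1})$, and it is the aspect I would double-check most carefully when writing the proof in full.
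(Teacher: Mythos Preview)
Your proposal is correct and follows essentially the same route as the paper: for item~1 the paper also reduces $\J{2}$ to $\J{2}_+$ via $\IL^- \vdash A \rhd B \to A \rhd (B \lor C)$, and for item~2 the paper likewise derives $(B \lor C) \rhd C$ from $B \rhd C$ using $\J{1}$ (in the form $C \rhd C$) together with $\J{3}$, and then applies $\J{2}$.
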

\begin{proof}
1. This is because $\IL^- \vdash A \rhd B \to A \rhd (B \lor C)$. 

2. Since $\IL^-(\J{1}) \vdash B \rhd C \to (B \lor C) \rhd C$ by $\J{1}$ and $\J{3}$, we have $\IL^-(\J{1}, \J{2}) \vdash (A \rhd (B \lor C)) \land (B \rhd C) \to A \rhd C$. 
\end{proof}

We proved in Corollary \ref{4C1} that $\IL^-(\J{2})$ proves $\J{4}$. 
Analogously, we prove that $\J{2}_+$ is stronger than $\J{4}_+$ over $\IL^-$. 

\begin{prop}\label{2P3}
$\IL^-(\J{2}_+) \vdash \J{4}_+$. 
\end{prop}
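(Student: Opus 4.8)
The plan is to derive $\J{4}_+$ directly inside $\IL^-(\J{2}_+)$, using only the rule $\R{1}$ and Proposition~\ref{0P3}. The key observation is that $\J{2}_+$ becomes applicable once we decompose the antecedent $A$ of a $\rhd$-formula as $(A \land \neg B) \lor B$: this is a tautological consequence of $A$, and it separates off the part of $A$ that already entails the target $B$ from the remainder $A \land \neg B$, which is killed by the box hypothesis.

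Fix formulas $A$, $B$, $C$. First, since $\IL^- \vdash A \to (A \land \neg B) \lor B$, the rule $\R{1}$ gives $\IL^- \vdash C \rhd A \to C \rhd ((A \land \neg B) \lor B)$. Next, from the tautology $(A \to B) \to \neg(A \land \neg B)$ together with Necessitation and $\G{2}$ we obtain $\IL^- \vdash \Box(A \to B) \to \Box \neg (A \land \neg B)$, and Proposition~\ref{0P3}.1 (with the substitution $[A \mapsto A \land \neg B,\ B \mapsto B]$) yields $\IL^- \vdash \Box \neg(A \land \neg B) \to (A \land \neg B) \rhd B$; hence $\IL^- \vdash \Box(A \to B) \to (A \land \neg B) \rhd B$.

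Finally, instantiating $\J{2}_+$ with $[A \mapsto C,\ B \mapsto A \land \neg B,\ C \mapsto B]$ gives $\IL^-(\J{2}_+) \vdash \bigl(C \rhd ((A \land \neg B) \lor B)\bigr) \land \bigl((A \land \neg B) \rhd B\bigr) \to C \rhd B$. Chaining the three implications produces $\IL^-(\J{2}_+) \vdash \Box(A \to B) \land (C \rhd A) \to C \rhd B$, which is exactly $\J{4}_+$.

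There is no real obstacle here; the only thing to spot is the right decomposition of the antecedent so that $\J{2}_+$ fires. (Alternatively one could target $\J{4}_+''$ via Proposition~\ref{4P2}, proving $\Box A \land (C \rhd B) \to C \rhd (A \land B)$ using the decomposition $B \leftrightarrow (B \land \neg A) \lor (A \land B)$, but the direct route above is shorter and avoids invoking the equivalences of Proposition~\ref{4P2}.)
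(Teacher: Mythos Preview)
Your proof is correct and follows essentially the same idea as the paper's. The only organizational difference is that the paper applies the equivalent form $\J{2}_+'$, namely $(C \rhd A) \land ((A \land \neg B) \rhd B) \to C \rhd B$, and then invokes Proposition~\ref{2P1}, whereas you apply $\J{2}_+$ directly after the $\R{1}$ step $C \rhd A \to C \rhd ((A \land \neg B) \lor B)$; that $\R{1}$ step is precisely what is hidden inside the proof of Proposition~\ref{2P1}, so the two arguments coincide once unfolded.
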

\begin{proof}
Since $\IL^- \vdash \Box(A \to B) \to \Box \neg (A \land \neg B)$, we have $\IL^- \vdash \Box(A \to B) \to (A \land \neg B) \rhd B$ by Proposition \ref{0P3}.1. 
Then $\IL^-(\J{2}_+') \vdash \Box(A \to B) \to (C \rhd A \to C \rhd B)$. 
By Proposition \ref{2P1}, we obtain $\IL^-(\J{2}_+) \vdash \J{4}_+$. 
\end{proof}

The following corollary is straightforward from Propositions \ref{2P2}.2 and \ref{2P3}. 

\begin{cor}\label{2C1}
$\IL^-(\J{1}, \J{2}) \vdash \J{4}_+$. 
\end{cor}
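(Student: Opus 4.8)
The plan is to chain together the two propositions referenced in the statement, since $\IL^-(\J{1},\J{2})$ already contains everything needed.

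First I would invoke Proposition \ref{2P2}.2 to conclude that $\IL^-(\J{1},\J{2}) \vdash \J{2}_+$. This shows that every theorem of $\IL^-(\J{2}_+)$ is also a theorem of $\IL^-(\J{1},\J{2})$: indeed, $\IL^-(\J{1},\J{2})$ has all the axioms and rules of $\IL^-$, and it now derives every instance of the extra axiom scheme $\J{2}_+$, so any derivation in $\IL^-(\J{2}_+)$ can be replayed in $\IL^-(\J{1},\J{2})$ with each use of $\J{2}_+$ replaced by its derivation.

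Then I would apply Proposition \ref{2P3}, which gives $\IL^-(\J{2}_+) \vdash \J{4}_+$. Combining this with the previous paragraph yields $\IL^-(\J{1},\J{2}) \vdash \J{4}_+$, as desired.

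There is essentially no obstacle here; the only thing to be careful about is the bookkeeping that ``$L \vdash \Sigma$ implies every $L(\Sigma)$-theorem is an $L$-theorem'', which is the standard fact that adding a derivable scheme as an axiom does not increase the set of theorems. So the proof reduces to a one-line composition of Propositions \ref{2P2}.2 and \ref{2P3}.
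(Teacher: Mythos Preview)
Your proposal is correct and follows exactly the paper's approach: the corollary is stated as straightforward from Propositions \ref{2P2}.2 and \ref{2P3}, and that is precisely the composition you carry out.
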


We prove that $\J{2}$ and $\J{2}_+$ have the same frame condition with respect to the $\IL^-$-frames. 

\begin{prop}\label{2P4}
Let $F = \langle W, R, \{S_x\}_{x \in W} \rangle$ be any $\IL^-$-frame. 
Then the following are equivalent: 
\begin{enumerate}
	\item $\J{2}_+$ is valid in $F$. 
	\item $\J{2}$ is valid in $F$. 
	\item $\J{4}$ is valid in $F$ and for any $x \in W$, $S_x$ is transitive. 
\end{enumerate}
\end{prop}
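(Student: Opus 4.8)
The plan is to prove the cycle $(1)\Rightarrow(2)\Rightarrow(3)\Rightarrow(1)$, where two of the three steps are immediate from facts already established. Since $\IL^-(\J{2}_+) \vdash \J{2}$ by Proposition \ref{2P2}.1 and $\IL^-(\J{2}) \vdash \J{4}$ by Corollary \ref{4C1}, and since the induction in the proof of Proposition \ref{0P1} shows more generally that every theorem of an extension $\IL^-(\Sigma)$ is valid in every $\IL^-$-frame validating $\Sigma$, the implication $(1)\Rightarrow(2)$ follows at once, and in $(2)\Rightarrow(3)$ the validity of $\J{4}$ in $F$ is likewise immediate. So the genuine content is: (a) in $(2)\Rightarrow(3)$, that each $S_x$ is transitive; and (b) the implication $(3)\Rightarrow(1)$.

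For (a), assume $\J{2}$ is valid in $F$; then so is $\J{4}$, and hence by Proposition \ref{4P4} the frame condition $\forall x,y,z\in W\,(y S_x z \Rightarrow x R z)$ holds. Suppose $y S_x z$ and $z S_x w$; I want $y S_x w$. Take three fixed propositional variables $p,q,r$ and a satisfaction relation $\Vdash$ on $F$ with $u\Vdash p \iff u=y$, $u\Vdash q \iff u=z$, and $u\Vdash r \iff u=w$ for all $u\in W$. From $y S_x z$ we get $x R y$, and the only $R$-successor of $x$ satisfying $p$ is $y$, which $S_x$-sees the $q$-world $z$; hence $x\Vdash p\rhd q$. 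Using $x R z$ (from the $\J{4}$ condition applied to $y S_x z$) the same reasoning via $z S_x w$ gives $x\Vdash q\rhd r$. By validity of $\J{2}$, $x\Vdash p\rhd r$; applying this at the successor $y$ (which satisfies $p$) produces some $v$ with $y S_x v$ and $v\Vdash r$, forcing $v=w$, i.e.\ $y S_x w$.

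For (b), assume $\J{4}$ is valid in $F$ --- equivalently $\forall x,y,z\in W\,(y S_x z \Rightarrow x R z)$ by Proposition \ref{4P4} --- and that every $S_x$ is transitive. Let $\Vdash$ be arbitrary and suppose $x\Vdash (A\rhd(B\lor C))\land(B\rhd C)$. Pick $y$ with $x R y$ and $y\Vdash A$; then there is $z$ with $y S_x z$ and $z\Vdash B\lor C$. If $z\Vdash C$ we already have an $S_x$-successor of $y$ satisfying $C$. If instead $z\Vdash B$, then $x R z$ by the $\J{4}$ condition, so $x\Vdash B\rhd C$ yields $w$ with $z S_x w$ and $w\Vdash C$, and transitivity of $S_x$ gives $y S_x w$. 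Either way $x\Vdash A\rhd C$, so $\J{2}_+$ is valid in $F$.

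The only delicate point is verifying the auxiliary formula $q\rhd r$ at $x$ in step (a): this is exactly where the hypothesis $\J{4}$ (i.e.\ $y S_x z\Rightarrow x R z$) is indispensable, since without $x R z$ the world $z$ need not be an $R$-successor of $x$ and $x\Vdash q\rhd r$ could fail. I expect no further obstacle; the rest is routine manipulation with satisfaction relations pinpointing single worlds, as in the proofs of Propositions \ref{1P2} and \ref{4P4}.
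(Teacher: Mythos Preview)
Your proof is correct and follows essentially the same route as the paper: $(1\Rightarrow 2)$ via Proposition~\ref{2P2}.1, and $(3\Rightarrow 1)$ by the identical semantic case-split. The only difference is that for $(2\Rightarrow 3)$ the paper simply cites Visser~\cite{Vis88}, whereas you supply the explicit transitivity argument using propositional variables to pin down single worlds; your argument there is fine.

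One small correction to your closing commentary: in step~(a) the hypothesis $\J{4}$ is \emph{not} actually needed to verify $x\Vdash q\rhd r$. You already have $x R z$ directly from the basic $\IL^-$-frame condition applied to $z S_x w$ (not from $\J{4}$ applied to $y S_x z$), and in any case if $x R z$ failed then $x\Vdash q\rhd r$ would hold vacuously, not ``could fail'' as you write. Where $\J{4}$ is genuinely indispensable is in step~(b), to obtain $x R z$ from $y S_x z$ so that $x\Vdash B\rhd C$ can be applied at $z$. This does not affect the correctness of your proof, only the explanatory remark.
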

\begin{proof}
$(1 \Rightarrow 2)$: By Proposition \ref{2P2}.1. 

$(2 \Rightarrow 3)$: This is proved in Visser \cite{Vis88}. 

$(3 \Rightarrow 1)$: Assume that $\mathbf{J4}$ is valid in $F$ and for any $x \in W$, $S_x$ is transitive. 
Suppose $x \Vdash (A \rhd (B \lor C)) \land (B \rhd C)$. 
Let $y \in W$ be any element such that $x R y$ and $y \Vdash A$. 
Then there exists $z \in W$ such that $y S_x z$ and $z \Vdash B \lor C$. 
We shall show that there exists $u \in W$ such that $y S_x u$ and $u \Vdash C$. 
If $z \Vdash C$, then this is done. 
If $z \nVdash C$, then $z \Vdash B$. 
Since $x R z$, by our supposition, there exists $u \in W$ such that $z S_x u$ and $u \Vdash C$. 
By the transitivity of $S_x$, we obtain $y S_x u$. 

Therefore we conclude $x \Vdash A \rhd C$. 
That is to say, $\J{2}_+$ is valid in $F$. 
\end{proof}

We prove that $\J{2}$ and $\J{2}_+$ have different frame conditions with respect to $\ILS$-frames. 

\begin{prop}\label{2P5}
Let $F = \langle W, R, \{S_x\}_{x \in W} \rangle$ be an $\ILS$-frame. 
Then the following are equivalent: 
\begin{enumerate}
	\item $\J{2}$ is valid in $F$.  
	\item $\J{4}$ is valid in $F$ and 
\[
	\forall x, y \in W, \forall V \subseteq W \left(y S_x V\ \&\ \forall z \in V \cap R[x] (z S_x U_z) \Rightarrow y S_x \left(\bigcup_{z \in V \cap R[x]} U_z\right)\right).
\] 
\end{enumerate}
\end{prop}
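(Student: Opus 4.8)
The plan is to prove the two implications separately, in the spirit of Proposition~\ref{2P4} (the $\IL^-$-frame analogue), but keeping careful track of the restriction to $R[x]$ that is characteristic of $\ILS$-semantics.

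For $(1 \Rightarrow 2)$: validity of $\J{2}$ in $F$ yields validity of $\J{4}$ in $F$, since $\IL^-(\J{2}) \vdash \J{4}$ by Corollary~\ref{4C1}; this is the first half of condition~2. For the second half, fix $x, y \in W$ and $V \subseteq W$ with $y S_x V$, together with sets $U_z$ for $z \in V \cap R[x]$ with $z S_x U_z$, and put $U := \bigcup_{z \in V \cap R[x]} U_z$. Choose three fresh propositional variables $p, q, r$ and a satisfaction relation $\Vdash$ on $F$ with $u \Vdash p \iff u = y$, $u \Vdash q \iff u \in V$ and $u \Vdash r \iff u \in U$. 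Then $x \Vdash p \rhd q$: the only $R$-successor of $x$ satisfying $p$ is $y$ (and $x R y$ holds since $y S_x V$), while $y S_x V$ and every element of $V$ satisfies $q$. Also $x \Vdash q \rhd r$: an $R$-successor of $x$ satisfying $q$ is some $z \in V \cap R[x]$, and $z S_x U_z$ with $U_z \subseteq U$, so all elements of $U_z$ satisfy $r$. By validity of $\J{2}$ we get $x \Vdash p \rhd r$; applying this at the $R$-successor $y$ (which satisfies $p$) gives some $V' \subseteq W$ with $y S_x V'$ and $V' \subseteq U$. Since $y S_x V'$ forces $V' \neq \emptyset$, also $U \neq \emptyset$, and Monotonicity yields $y S_x U$, as required.

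For $(2 \Rightarrow 1)$: assume $\J{4}$ is valid in $F$ and condition~2 holds, and suppose $x \Vdash (A \rhd B) \land (B \rhd C)$ under some $\Vdash$; let $y \in W$ with $x R y$ and $y \Vdash A$. From $x \Vdash A \rhd B$ we obtain $V \subseteq W$ with $y S_x V$ and $z \Vdash B$ for all $z \in V$. For each $z \in V \cap R[x]$ we have $x R z$ and $z \Vdash B$, so $x \Vdash B \rhd C$ supplies $U_z \subseteq W$ with $z S_x U_z$ and $w \Vdash C$ for all $w \in U_z$. Condition~2 then gives $y S_x \bigcup_{z \in V \cap R[x]} U_z$; by validity of $\J{4}$ (Proposition~\ref{4P5}) this set is non-empty, and all of its elements satisfy $C$. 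Hence $x \Vdash A \rhd C$, so $\J{2}$ is valid in $F$.

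I expect that the only genuine difficulty lies in pinning down the correct $\ILS$-frame condition in the first place: one must see that it has to quantify over \emph{all} choices of the family $(U_z)_{z \in V \cap R[x]}$ and that the union indexing must be over $V \cap R[x]$, not over $V$. Once that is settled, the two directions are routine; the only steps needing a little attention are, in $(1 \Rightarrow 2)$, verifying that the variable $q$ (true on all of $V$) still witnesses $x \Vdash q \rhd r$ correctly, together with the use of Monotonicity to pass from $V' \subseteq U$ to $y S_x U$, and, in $(2 \Rightarrow 1)$, the appeal to $\J{4}$ to ensure the union serving as witness for $A \rhd C$ is non-empty.
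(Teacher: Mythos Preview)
Your proof is correct and follows essentially the same approach as the paper: in $(1 \Rightarrow 2)$ you use the same three-variable valuation $p, q, r$ and Monotonicity, and in $(2 \Rightarrow 1)$ you make the same appeal to $\J{4}$ for non-emptiness of $\bigcup_{z \in V \cap R[x]} U_z$. The only cosmetic difference is that the paper establishes $V \cap R[x] \neq \emptyset$ via $\J{4}$ \emph{before} invoking the transitivity condition, whereas you apply the condition first and then note non-emptiness; both orderings are fine.
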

\begin{proof}
$(1 \Rightarrow 2)$: 
Assume that $\J{2}$ is valid in $F$. 
Then by Corollary \ref{4C1}, $\J{4}$ is valid in $F$. 
Suppose $y S_x V$ and $\forall z \in V \cap R[x] (z S_x U_z)$. 
Let $\Vdash$ be a satisfaction relation on $F$ such that for any $u \in W$, $u \Vdash p$ if and only if $u = y$, $u \Vdash q$ if and only if $u \in V$, and $u \Vdash r$ if and only if $\exists z \in V \cap R[x] (u \in U_z)$. 
Then $x \Vdash p \rhd q$ and $x \Vdash q \rhd r$. 
By the validity of $\J{2}$, $x \Vdash p \rhd r$. 
Since $x R y$ and $y \Vdash p$, there exists a $U \subseteq W$ such that $y S_x U$ and $\forall w \in U (w \Vdash r)$. 
By the definition of $\Vdash$, $U \subseteq \bigcup_{z \in V \cap R[x]} U_z$. 
By Monotonicity, $y S_x (\bigcup_{z \in V \cap R[x]} U_z)$.

$(2 \Rightarrow 1)$: 
Assume that $\J{4}$ is valid in $F$ and $\forall x, y \in W, \forall V \subseteq W(y S_x V\ \&\ \forall z \in V \cap R[x](z S_x U_z) \Rightarrow y S_x (\bigcup_{z \in V \cap R[x]} U_z))$. 
Suppose $x \Vdash (A \rhd B) \land (B \rhd C)$. 
Let $y \in W$ be any element with $x R y$ and $y \Vdash A$. 
Then there exists a $V \subseteq W$ such that $y S_x V$ and $\forall z \in V (z \Vdash B)$. 
Since $\J{4}$ is valid in $F$, we have $V \cap R[x] \neq \emptyset$. 
Then for each $z \in V \cap R[x]$,  there exists $U_z \subseteq W$ such that $z S_x U_z$ and $\forall w \in U_z (w \Vdash C)$. 
By the assumption, $y S_x (\bigcup_{z \in V \cap R[x]} U_z)$ because the set $\bigcup_{z \in V \cap R[x]} U_z$ is non-empty. 
Also $\forall w \in \bigcup_{z \in V \cap R[x]} U_z (w \Vdash C)$. 
We have shown $w \Vdash A \rhd C$. 
Hence $\J{2}$ is valid in $F$. 
\end{proof}

The condition $\forall x, y \in W, \forall V \subseteq W (y S_x V\ \&\ \forall z \in V \cap R[x] (z S_x U_z) \Rightarrow y S_x (\bigcup_{z \in V \cap R[x]} U_z))$ stated in Proposition \ref{2P5} is required in the usual definition of $\IL_{\textrm{set}}$-frames. 

\begin{prop}\label{2P6}
Let $F = \langle W, R, \{S_x\}_{x \in W} \rangle$ be any $\ILS$-frame. 
Then the following are equivalent: 
\begin{enumerate}
	\item $\J{2}_+$ is valid in $F$.  
	\item $\J{4}$ is valid in $F$ and
\[
	\forall x, y \in W, \forall V_0, V_1 \subseteq W \left(y S_x (V_0 \cup V_1)\ \&\ \forall z \in V_0 \cap R[x] (z S_x U_z) \Rightarrow y S_x \left(\bigcup_{z \in V_0 \cap R[x]} U_z \cup V_1 \right) \right).
\] 
\end{enumerate}
\end{prop}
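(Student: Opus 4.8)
The plan is to follow the pattern of Proposition~\ref{2P5}, the only new ingredient being that the antecedent $A \rhd (B \lor C)$ of $\J{2}_+$ lets the ``first transition'' land in a union $V_0 \cup V_1$, so an auxiliary set $V_1$ must be carried through the whole argument.

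For the direction $(1 \Rightarrow 2)$ I would first record that $\J{4}$ is valid in $F$: by Proposition~\ref{2P2}.1 and Corollary~\ref{4C1} we have $\IL^-(\J{2}_+) \vdash \J{4}$, and since $\J{2}_+$ is valid in $F$, every theorem of $\IL^-(\J{2}_+)$ is valid in $F$ (the rules of $\IL^-$ preserve validity in $\ILS$-frames, as in the proof of Proposition~\ref{0P4}), in particular $\J{4}$. Now fix $x, y \in W$ and $V_0, V_1 \subseteq W$ with $y S_x (V_0 \cup V_1)$ and $z S_x U_z$ for every $z \in V_0 \cap R[x]$; in particular $x R y$. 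I would take a satisfaction relation $\Vdash$ on $F$ with, for some fixed propositional variables $p, q, r$,
\[
	u \Vdash p \iff u = y, \qquad u \Vdash q \iff u \in V_0, \qquad u \Vdash r \iff u \in \textstyle\bigcup_{z \in V_0 \cap R[x]} U_z \cup V_1.
\]
Then $x \Vdash p \rhd (q \lor r)$, witnessed at $y$ by $V_0 \cup V_1$ (points of $V_0$ satisfy $q$, points of $V_1$ satisfy $r$), and $x \Vdash q \rhd r$, witnessed at each $u \in V_0 \cap R[x]$ by $U_u$. Validity of $\J{2}_+$ gives $x \Vdash p \rhd r$, so there is a $U$ with $y S_x U$ and $U \subseteq \{w : w \Vdash r\} = \bigcup_{z \in V_0 \cap R[x]} U_z \cup V_1$; since $U$ is nonempty by the $\ILS$-frame conditions, Monotonicity yields $y S_x \left( \bigcup_{z \in V_0 \cap R[x]} U_z \cup V_1 \right)$, which is the desired property.

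For $(2 \Rightarrow 1)$, assume $\J{4}$ is valid in $F$ together with the displayed condition, and suppose $x \Vdash (A \rhd (B \lor C)) \land (B \rhd C)$; let $y$ satisfy $x R y$ and $y \Vdash A$. From $x \Vdash A \rhd (B \lor C)$ obtain $W_0$ with $y S_x W_0$ and $w \Vdash B \lor C$ for all $w \in W_0$, and set $V_0 := \{w \in W_0 : w \Vdash B\}$ and $V_1 := W_0 \setminus V_0$, so that $V_1 \subseteq \{w : w \Vdash C\}$ and $y S_x (V_0 \cup V_1)$. For each $z \in V_0 \cap R[x]$, from $z \Vdash B$, $x R z$ and $x \Vdash B \rhd C$ pick $U_z$ with $z S_x U_z$ and $w \Vdash C$ for all $w \in U_z$. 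The displayed condition then gives $y S_x \left( \bigcup_{z \in V_0 \cap R[x]} U_z \cup V_1 \right)$, and every element of this set satisfies $C$; hence $x \Vdash A \rhd C$, and since $y$ was arbitrary, $\J{2}_+$ is valid in $F$.

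I expect no genuine obstacle: the argument is a variant of Proposition~\ref{2P5} with one extra set threaded through. The only points needing care, which I would spell out, are the bookkeeping ones: that the sets handed to $S_x$ are nonempty (in $(1 \Rightarrow 2)$ this is automatic, since $U \neq \emptyset$; in $(2 \Rightarrow 1)$, if $V_1 = \emptyset$ then $W_0 = V_0$ is nonempty and, by $\J{4}$ and Proposition~\ref{4P5}, $V_0 \cap R[x] \neq \emptyset$, so some $U_z$ is nonempty), and that the intersections with $R[x]$ in the frame condition match exactly what the truth clause for $\rhd$ quantifies over.
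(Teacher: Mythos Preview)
Your proof is correct and follows essentially the same approach as the paper's own proof: the same choice of satisfaction relation for $(1 \Rightarrow 2)$ and the same split of the witnessing set into $V_0$ and $V_1$ for $(2 \Rightarrow 1)$, with the non-emptiness bookkeeping handled via $\J{4}$ exactly as in the paper. The only cosmetic difference is that you take $V_1 := W_0 \setminus V_0$ whereas the paper takes $V_1 := \{z \in V : z \Vdash C\}$, allowing overlap with $V_0$; this is immaterial to the argument.
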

\begin{proof}
$(1 \Rightarrow 2)$: 
Assume $\J{2}_+$ is valid in $F$. 
Since $\IL^-(\J{2}_+) \vdash \J{4}$, $\J{4}$ is also valid in $F$. 
Suppose $y S_x (V_0 \cup V_1)$ and $\forall z \in V_0 \cap R[x] (z S_x U_z)$. 
Let $\Vdash$ be a satisfaction relation on $F$ such that for any $u \in W$, $u \Vdash p$ if and only if $u = y$, $u \Vdash q$ if and only if $u \in V_0$, and $u \Vdash r$ if and only if ($\exists z \in V_0 \cap R[x] (u \in U_z)$ or $u \in V_1$). 
Then $x \Vdash p \rhd (q \lor r)$ and $x \Vdash q \rhd r$. 
By the validity of $\J{2}_+$, $x \Vdash p \rhd r$. 
Since $x R y$ and $y \Vdash p$, there exists a $U \subseteq W$ such that $y S_x U$ and $\forall w \in U (w \Vdash r)$. 
Then by the definition of $\Vdash$, we have $U \subseteq \bigcup_{z \in V_0 \cap R[x]} U_z \cup V_1$. 
By Monotonicity, $y S_x (\bigcup_{z \in V_0 \cap R[x]} U_z \cup V_1)$.

$(2 \Rightarrow 1)$: 
Assume that $\J{4}$ is valid in $F$ and $\forall x, y \in W, \forall V_0, V_1 \subseteq W(y S_x (V_0 \cup V_1)\ \&\ \forall z \in V_0 \cap R[x](z S_x U_z) \Rightarrow y S_x (\bigcup_{z \in V_0 \cap R[x]} U_z \cup V_1))$. 
Let $x \Vdash (A \rhd (B \lor C)) \land (B \rhd C)$. 
Let $y \in W$ be such that $x R y$ and $y \Vdash A$, then there exists a $V \subseteq W$ such that $y S_x V$ and $\forall z \in V (z \Vdash B \lor C)$. 
Since $\J{4}$ is valid, we have $V \cap R[x] \neq \emptyset$. 
Let $V_0 : = \{z \in V : z \Vdash B\}$ and $V_1 : = \{z \in V : z \Vdash C\}$, then $V = V_0 \cup V_1$. 
In particular, for each $z \in V_0 \cap R[x]$, there exists a $U_z \subseteq W$ such that $z S_x U_z$ and $\forall w \in U_z (w \Vdash C)$. 
By the assumption, we have $y S_x (\bigcup_{z \in V_0 \cap R[x]} U_z \cup V_1)$ because $\bigcup_{z \in V_0 \cap R[x]} U_z \cup V_1$ is non-empty. 
Since $\forall w \in \bigcup_{z \in V_0 \cap R[x]} U_z \cup V_1 (w \Vdash C)$, we obtain $w \Vdash A \rhd C$. 
Therefore $\J{2}_+$ is valid in $F$. 
\end{proof}

\subsection{The axiom scheme $\J{5}$}

We investigate $\J{5}$. 
\begin{description}
	\item [$\J{5}$] $\Diamond A \rhd A$. 
\end{description}

The following proposition is stated in Visser \cite{Vis88}. 

\begin{prop}[Visser \cite{Vis88}]\label{5P1}
Let $F = \langle W, R, \{S_x\}_{x \in W} \rangle$ be any $\IL^-$-frame. 
The following are equivalent: 
\begin{enumerate}
	\item $\J{5}$ is valid in $F$. 
	\item $\forall x, y, z \in W (x R y\ \&\ y R z \Rightarrow y S_x z)$. 
\end{enumerate}
\end{prop}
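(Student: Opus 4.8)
The plan is to prove both directions by the standard frame-correspondence argument, exactly mirroring the proofs of Propositions \ref{1P2} and \ref{4P4}.

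For the direction $(2 \Rightarrow 1)$ I would assume the property $(\ast)$, i.e.\ $\forall x,y,z\in W(xRy\ \&\ yRz \Rightarrow yS_xz)$, and show that $\Diamond A \rhd A$ is valid in $F$. Fix an arbitrary satisfaction relation $\Vdash$ on $F$, an arbitrary $x \in W$, and a formula $A$. To check $x \Vdash \Diamond A \rhd A$, take any $y \in W$ with $xRy$ and $y \Vdash \Diamond A$; by the satisfaction clause for $\Box$ (and the abbreviation $\Diamond A = \lnot\Box\lnot A$), there is $z \in W$ with $yRz$ and $z \Vdash A$. Since $xRy$ and $yRz$, condition 2 gives $yS_xz$, and $z \Vdash A$ supplies exactly the required witness, so $x \Vdash \Diamond A \rhd A$.

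For the direction $(1 \Rightarrow 2)$ I would use the usual ``diagnostic valuation'' method. Assume $\J{5}$ is valid in $F$, and suppose $xRy$ and $yRz$; the goal is $yS_xz$. Pick a fixed propositional variable $p$ and let $\Vdash$ be the satisfaction relation on $F$ determined by $u \Vdash p \iff u = z$ (for all $u \in W$). Then $z \Vdash p$, and since $yRz$ we get $y \Vdash \Diamond p$. Because $\J{5}$ is valid, in particular $x \Vdash \Diamond p \rhd p$; combining this with $xRy$ and $y \Vdash \Diamond p$, the satisfaction clause for $\rhd$ yields some $w \in W$ with $yS_xw$ and $w \Vdash p$. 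By the choice of $\Vdash$ this forces $w = z$, hence $yS_xz$, which is condition 2.

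I expect no genuine obstacle here: neither direction needs induction or any combinatorial bookkeeping, so the whole argument is a direct transcription of the frame-correspondence technique already used repeatedly in Section \ref{Sec:Ext}. The only points deserving a moment's care in the second direction are that the assignment making $p$ true exactly at $z$ indeed extends to a (unique) satisfaction relation on $F$, and that the witness $w$ produced by $\J{5}$ is forced to coincide with $z$ — the latter being immediate since $p$ holds at no world other than $z$.
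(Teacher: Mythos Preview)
Your proposal is correct and follows essentially the same approach as the paper's own proof: the $(2\Rightarrow 1)$ direction is the direct verification you describe, and the $(1\Rightarrow 2)$ direction uses exactly the diagnostic valuation $u\Vdash p \iff u=z$ to force the $S_x$-witness produced by $\J{5}$ to equal $z$.
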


\begin{proof}

$(1 \Rightarrow 2)$: 
Assume that $\mathbf{J5}$ is valid in $F$. Suppose $x R y$ and $y R z$. 
Let $\Vdash$ be a satisfaction relation on $F$ such that for any $u \in W$, $u \Vdash p$ if and only if $u=z$ for some fixed propositional variable $p$. 
Then $y R z$ and $z \Vdash p$, and hence $y \Vdash \Diamond p$. 
Since $x R y$ and $x \Vdash \Diamond p \rhd p$, there exists $u \in W$ such that $y S_x u$ and $u \Vdash p)$. 
By the definition of $\Vdash$, we have $u = z$. 
Therefore $y S_x z$. 

$(2 \Rightarrow 1)$: 
Assume that $\forall x, y, z \in W (x R y\ \&\ y R z \Rightarrow y S_x z)$. 
Let $\Vdash$ be any satisfaction relation on $F$. 
Let $y \in W$ be any element such that $x R y$ and $y \Vdash \Diamond A$. 
Then there exists $z \in W$ such that $y R z$ and $z \Vdash A$. 
By the assumption, $y S_x z$ and hence we obtain $x \Vdash \Diamond A \rhd A$. 
That is, $\J{5}$ is valid in $F$. 
\end{proof}

\begin{prop}\label{5P2}
Let $F = \langle W, R, \{S_x\}_{x \in W} \rangle$ be any $\ILS$-frame. 
Then the following are equivalent: 
\begin{enumerate}
	\item $\J{5}$ is valid in $F$.  
	\item $\forall x, y, z \in W(x R y\ \&\ y R z \Rightarrow y S_x \{z\})$. 
\end{enumerate}
\end{prop}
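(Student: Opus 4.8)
The plan is to prove the two implications separately, following exactly the pattern of Propositions \ref{1P3}, \ref{4P5} and \ref{5P1}: in each direction we choose (or take an arbitrary) satisfaction relation on $F$ and read off the desired conclusion.

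For the direction $(1 \Rightarrow 2)$, I would fix $x, y, z \in W$ with $x R y$ and $y R z$, pick a propositional variable $p$, and consider the satisfaction relation $\Vdash$ on $F$ for which $u \Vdash p$ holds precisely when $u = z$. Since $y R z$ and $z \Vdash p$, we get $y \Vdash \Diamond p$. The validity of $\J{5}$ gives $x \Vdash \Diamond p \rhd p$, so from $x R y$ and $y \Vdash \Diamond p$ we obtain a set $V \subseteq W$ with $V \neq \emptyset$, $y S_x V$, and $w \Vdash p$ for every $w \in V$. As $p$ is satisfied only at $z$, every element of $V$ equals $z$, and since $V$ is non-empty this forces $V = \{z\}$, hence $y S_x \{z\}$.

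For $(2 \Rightarrow 1)$, I would assume the frame condition, take an arbitrary satisfaction relation $\Vdash$ on $F$ and arbitrary $x, y \in W$ with $x R y$ and $y \Vdash \Diamond A$. Then there is $z \in W$ with $y R z$ and $z \Vdash A$, and the hypothesis yields $y S_x \{z\}$; trivially every element of the singleton $\{z\}$ satisfies $A$. Hence $x \Vdash \Diamond A \rhd A$, so $\J{5}$ is valid in $F$.

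The argument is entirely routine and I do not expect any genuine obstacle; the only point that differs from the $\IL^{-}$-frame case (Proposition \ref{5P1}) and deserves a little care is the last step of $(1 \Rightarrow 2)$: because $S_x$ relates a world to a \emph{set} of worlds, one must use both the non-emptiness of the witnessing set $V$ and the fact that the chosen valuation makes $p$ true at the single world $z$ in order to conclude that $V$ is exactly $\{z\}$, not merely a subset of, or a superset of, $\{z\}$.
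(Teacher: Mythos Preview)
Your proposal is correct and follows essentially the same argument as the paper's proof: in both directions the paper makes exactly the choices you describe, and in $(1 \Rightarrow 2)$ it uses the same observation that the witnessing set $V$ is non-empty and contained in $\{z\}$, hence equal to $\{z\}$.
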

\begin{proof}
$(1 \Rightarrow 2)$: 
Assume that $\J{5}$ is valid in $F$. 
Suppose $x R y$ and $y R z$. 
Let $\Vdash$ be a satisfaction relation on $F$ such that for any $u \in W$, $u \Vdash p$ if and only if $u = z$ for some fixed propositional variable $p$.
Then $y R z$ and $z \Vdash p$, and hence $y \Vdash \Diamond p$. 
Since $x R y$ and $x \Vdash \Diamond p \rhd p$, there exists a $V \subseteq W$ such that $y S_x V$ and $\forall w \in V (w \Vdash p)$. 
By the definition of $\Vdash$, we have $V = \{z\}$. 
Therefore $y S_x \{z\}$. 

$(2 \Rightarrow 1)$: 
Assume $\forall x, y, z \in W(x R y\ \&\ y R z \Rightarrow y S_x \{z\})$. 
Let $y \in W$ be any element such that $x R y$ and $y \Vdash \Diamond A$. 
Then there exists $z \in W$ such that $y R z$ and $z \Vdash A$. 
By the assumption, $y S_x \{z\}$. 
Since $\forall w \in \{z\} (w \Vdash A)$, we obtain $x \Vdash \Diamond A \rhd A$. 
That is, $\J{5}$ is valid in $F$. 
\end{proof}

The condition stated in the second clause in Proposition \ref{5P2} is required in the original definition of $\IL_{\textrm{set}}$-frames.

\subsection{The logics $\CL$ and $\IL$}\label{SSec:CLIL}

In this subsection, we show that the logics $\CL$ and $\IL$ are exactly $\IL^-(\J{1}, \J{2})$ and $\IL^-(\J{1}, \J{2}, \J{5})$, respectively. 
Since $\IL^-(\J{1}, \J{2}, \J{5}) $ proves $\J{4}$, $\J{4}_+$ and $\J{2}_+$ by Propositions \ref{4P3}, \ref{2P2} and Corollary \ref{2C1}, our logics studied in this paper are actually sublogics of $\IL$. 
The logic $\CL$ is $\GL$ plus $\J{1}$, $\J{2}$, $\J{3}$ and $\J{4}$. 
Also the logic $\IL$ is $\CL$ plus $\J{5}$. 

\begin{prop}\label{CLP1}\leavevmode
\begin{enumerate}
	\item $\CL \vdash \Box A \leftrightarrow (\neg A) \rhd \bot$. 
	\item $\CL \vdash \Box(A \to B) \to (C \rhd A \to C \rhd B)$. 
	\item $\CL \vdash \Box(A \to B) \to (B \rhd C \to A \rhd C)$. 
\end{enumerate}
\end{prop}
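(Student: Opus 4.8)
The plan is to prove the three claims of Proposition~\ref{CLP1} in order, since each builds on the previous one, working entirely inside the logic $\CL = \GL + \J{1} + \J{2} + \J{3} + \J{4}$ (with Modus Ponens and Necessitation as the only rules). The goal is really to show that $\CL$ contains $\IL^-$, so I must establish $\J{6}$ and the admissibility of $\R{1}$ and $\R{2}$; parts 2 and 3 are exactly the $\Box$-strengthened forms of $\R{1}$ and $\R{2}$ (and, via Proposition~\ref{0P3}.2, imply the rules themselves), while part 1 is $\J{6}$.

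For part~1, the direction $\CL \vdash \Box A \to (\neg A)\rhd\bot$ is immediate from $\J{1}$: since $\GL \vdash \Box A \to \Box(\neg A \to \bot)$ (as $A \to (\neg A \to \bot)$ is a tautology, apply Necessitation and $\G{2}$), the axiom $\J{1}$ with $\neg A, \bot$ gives $\Box(\neg A \to \bot) \to (\neg A)\rhd\bot$. For the converse $\CL \vdash (\neg A)\rhd\bot \to \Box A$, I would use $\J{4}$ in the form $(\neg A)\rhd\bot \to (\Diamond \neg A \to \Diamond\bot)$; since $\GL \vdash \neg\Diamond\bot$ (because $\Box\bot$ is refutable... actually $\Diamond\bot = \neg\Box\neg\bot = \neg\Box\top$, and $\GL \vdash \Box\top$), we get $(\neg A)\rhd\bot \to \neg\Diamond\neg A$, i.e. $(\neg A)\rhd\bot \to \Box A$. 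So part~1 needs only $\J{1}$, $\J{4}$, and propositional/$\GL$ reasoning.

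For part~2, I want $\CL \vdash \Box(A\to B) \to (C\rhd A \to C\rhd B)$. The idea: from $\Box(A\to B)$ and $C\rhd A$ I should be able to pass through an intermediate $\rhd$ step using $\J{2}$. Specifically, $\GL \vdash \Box(A\to B) \to \Box(A \to (A\land B))$, hence by $\J{1}$, $\Box(A\to B) \to A\rhd(A\land B)$; since $A\land B \to B$ is a tautology, $\GL \vdash \Box((A\land B)\to B)$ and again $\J{1}$ gives $(A\land B)\rhd B$ outright. Then $\J{2}$ chains $C\rhd A$, $A\rhd(A\land B)$, $(A\land B)\rhd B$ — actually more directly: $C\rhd A$ together with $A\rhd B$ gives $C\rhd B$ by $\J{2}$, and $A\rhd B$ comes from $\Box(A\to B)$ by $\J{1}$. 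So part~2 is short: $\J{1}$ turns $\Box(A\to B)$ into $A\rhd B$, then $\J{2}$ finishes. For part~3, I want $\CL \vdash \Box(A\to B) \to (B\rhd C \to A\rhd C)$; symmetrically, $\J{1}$ gives $A\rhd B$ from $\Box(A\to B)$, and then $\J{2}$ chains $A\rhd B$ and $B\rhd C$ to $A\rhd C$.

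I do not expect a genuine obstacle here — every step is an application of $\J{1}$, $\J{2}$, or $\J{4}$ preceded by routine $\GL$-derivable facts of the form $\Box(X\to Y)$ obtained by Necessitation on a tautology plus distribution $\G{2}$. The only mild subtlety is in part~1's converse, where one must recall the $\GL$-fact $\GL\vdash\neg\Diamond\bot$ to kill the consequent of the $\J{4}$ instance; but this is standard. After Proposition~\ref{CLP1} is in hand, parts 1--3 respectively show $\CL$ proves $\J{6}$, admits $\R{1}$ (part 2 is the axiom $\J{4}_+$, which is stronger than $\R{1}$ by the remark following Proposition~\ref{4P2}), and admits $\R{2}$ (part 3 with Proposition~\ref{0P3}.2-style reasoning, or directly since $\Box(A\to B)$ follows from $A\to B$ by Necessitation), so that $\IL^- \subseteq \CL \subseteq \IL$ as claimed.
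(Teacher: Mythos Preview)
Your proposal is correct and follows essentially the same approach as the paper: part~1 uses $\J{1}$ for the forward direction and $\J{4}$ together with $\GL\vdash\neg\Diamond\bot$ for the converse, while parts~2 and~3 both reduce to applying $\J{1}$ to obtain $A\rhd B$ from $\Box(A\to B)$ and then chaining with $\J{2}$. The initial detour you sketch for part~2 through $A\land B$ is unnecessary, as you yourself note; the direct argument you land on is exactly the paper's.
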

\begin{proof}
1. $(\rightarrow)$: Since $\CL \vdash \Box A \to \Box (\neg A \to \bot)$, $\CL \vdash \Box A \to (\neg A) \rhd \bot$ by $\J{1}$. 

$(\leftarrow)$: By $\J{4}$, $\CL \vdash (\neg A) \rhd \bot \to (\Diamond \neg A \to \Diamond \bot)$. 
Since $\CL \vdash \neg \Diamond \bot$, $\CL \vdash (\neg A) \rhd \bot \to \neg \Diamond \neg A$. 
That is, $\CL \vdash (\neg A) \rhd \bot \to \Box A$. 

2. This is because $\CL \vdash \Box(A \to B) \to A \rhd B$ by $\J{1}$ and $\CL \vdash (C \rhd A) \land (A \rhd B) \to C \rhd B$ by $\J{2}$. 

3. This is because $\CL \vdash \Box(A \to B) \to A \rhd B$ by $\J{1}$ and $\CL \vdash (A \rhd B) \land (B \rhd C) \to A \rhd C$ by $\J{2}$. 
\end{proof}

\begin{prop}\label{CLP2}
The logics $\CL$ and $\IL^-(\J{1}, \J{2})$ are deductively equivalent. 
\end{prop}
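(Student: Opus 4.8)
The plan is to establish the two inclusions between theorem sets separately, each by the standard argument of checking axioms and admissible rules; all the substantive work has already been carried out in Proposition \ref{CLP1} and Corollary \ref{4C1}.

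First I would show that every theorem of $\IL^-(\J{1}, \J{2})$ is a theorem of $\CL$. For this it suffices to verify that $\CL$ proves each axiom of $\IL^-(\J{1}, \J{2})$ and is closed under each of its inference rules. The axioms $\G{1}, \G{2}, \G{3}$ of $\GL$ together with $\J{1}, \J{2}, \J{3}$ are literally among the axioms of $\CL$, and $\CL$ is closed under Modus Ponens and Necessitation by definition. The axiom $\J{6}$ is exactly Proposition \ref{CLP1}.1. Finally, $\CL$ admits the rules $\R{1}$ and $\R{2}$: if $\CL \vdash A \to B$, then $\CL \vdash \Box(A \to B)$ by Necessitation, and Propositions \ref{CLP1}.2 and \ref{CLP1}.3 then yield $\CL \vdash C \rhd A \to C \rhd B$ and $\CL \vdash B \rhd C \to A \rhd C$ respectively. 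Hence $\CL \vdash A$ whenever $\IL^-(\J{1}, \J{2}) \vdash A$.

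Conversely, I would show that every theorem of $\CL$ is a theorem of $\IL^-(\J{1}, \J{2})$, again by checking axioms and rules. The axioms $\G{1}, \G{2}, \G{3}, \J{1}, \J{2}, \J{3}$ of $\CL$ all belong to $\IL^-(\J{1}, \J{2})$, which is also closed under Modus Ponens and Necessitation. The only axiom of $\CL$ not immediately available is $\J{4}$, and this is provided by Corollary \ref{4C1}, which states $\IL^-(\J{2}) \vdash \J{4}$. Thus $\IL^-(\J{1}, \J{2}) \vdash A$ whenever $\CL \vdash A$, and combining the two inclusions yields the deductive equivalence.

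I do not expect any genuine obstacle: once Proposition \ref{CLP1} and Corollary \ref{4C1} are in place the argument is pure bookkeeping over axioms and rules. The one point demanding a little care is that for the first inclusion one must verify \emph{admissibility} of the derived rules $\R{1}$ and $\R{2}$ in $\CL$ — that is, closure of the set of theorems of $\CL$ under these rules, not merely derivability of the associated implications — but this is immediate from closure of $\CL$ under Necessitation applied to the hypothesis $A \to B$.
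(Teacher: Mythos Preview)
Your proposal is correct and follows exactly the paper's own argument: the paper's proof simply cites Proposition \ref{CLP1} for the inclusion $\CL \vdash \IL^-(\J{1}, \J{2})$ and Corollary \ref{4C1} for the converse. Your version spells out the same verification of axioms and rules in more detail, but the substance is identical.
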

\begin{proof}
$\CL \vdash \IL^-(\J{1}, \J{2})$: This follows from Proposition \ref{CLP1}. 

$\IL^-(\J{1}, \J{2}) \vdash \CL$: This is because $\IL^-(\J{2}) \vdash \J{4}$ by Corollary \ref{4C1}. 
\end{proof}

\begin{cor}\label{CLC1}
The logics $\IL$ and $\IL^-(\J{1}, \J{2}, \J{5})$ are deductively equivalent. 
\end{cor}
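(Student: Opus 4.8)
The plan is to obtain Corollary~\ref{CLC1} directly from Proposition~\ref{CLP2}. By definition $\IL = \CL + \J{5}$, and trivially $\IL^-(\J{1}, \J{2}, \J{5}) = \IL^-(\J{1}, \J{2}) + \J{5}$, so it suffices to observe that adjoining the single axiom scheme $\J{5}$ to the two deductively equivalent logics $\CL$ and $\IL^-(\J{1}, \J{2})$ yields two deductively equivalent logics. The substantive work is already contained in Propositions~\ref{CLP1} and~\ref{CLP2}; what remains is routine bookkeeping.

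First I would verify the inclusion $\IL \vdash A \Rightarrow \IL^-(\J{1}, \J{2}, \J{5}) \vdash A$ by induction on the length of a proof in $\CL + \J{5}$: every axiom of $\CL$ is a theorem of $\IL^-(\J{1}, \J{2})$ by Proposition~\ref{CLP2}, hence of $\IL^-(\J{1}, \J{2}, \J{5})$; each instance of $\J{5}$ is an axiom of $\IL^-(\J{1}, \J{2}, \J{5})$; and the inference rules of $\CL$, namely Modus Ponens and Necessitation, are among the inference rules of $\IL^-$. Then I would verify the converse $\IL^-(\J{1}, \J{2}, \J{5}) \vdash A \Rightarrow \IL \vdash A$, again by induction on proof length: the axioms $\G{1}$--$\G{3}$, $\J{1}$, $\J{2}$, $\J{3}$, $\J{5}$ of $\IL^-(\J{1}, \J{2}, \J{5})$ are all axioms (or instances of axioms) of $\IL$, while the remaining axiom $\J{6}$ of $\IL^-$ is provable in $\CL$ by Proposition~\ref{CLP1}.1, hence in $\IL$; Modus Ponens and Necessitation are common to both systems, and the rules $\R{1}$ and $\R{2}$ are admissible in $\CL$, and therefore in $\IL$, by Proposition~\ref{CLP1}.2 and~\ref{CLP1}.3 together with Necessitation.

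There is essentially no obstacle here: the only point requiring a modicum of care is the general fact that a deductive equivalence between two logics is preserved when the same axiom scheme is added to both, which is valid precisely because the inference rules needed on each side (the standard ones together with the admissibility of $\R{1}$, $\R{2}$ in $\CL$, and the derivability of $\J{6}$ in $\CL$) are available, as guaranteed by the propositions just cited.
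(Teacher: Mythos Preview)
Your proposal is correct and follows exactly the approach the paper intends: the paper states Corollary~\ref{CLC1} without proof, treating it as immediate from Proposition~\ref{CLP2} together with the definition $\IL = \CL + \J{5}$, and you have simply spelled out that immediate argument. One terminological nicety: you write that $\R{1}$ and $\R{2}$ are \emph{admissible} in $\CL$, but the justification you give via Proposition~\ref{CLP1}.2--3 and Necessitation actually shows they are \emph{derivable} rules, which is what is needed for the inference to transfer to the extension $\IL$.
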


Then de Jongh and Veltman's and Ignatiev's theorems are restated as follows: 

\begin{thm}[de Jongh and Veltman \cite{deJVel90}]
For any formula $A$, the following are equivalent: 
\begin{enumerate}
	\item $\IL^-(\J{1}, \J{2}, \J{5}) \vdash A$. 
	\item $A$ is valid in all finite $\IL^-$-frames where all axioms of $\IL^-(\J{1}, \J{2}, \J{5})$ are valid. 
\end{enumerate}
\end{thm}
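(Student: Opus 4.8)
The first implication is the soundness half and is essentially immediate. By Proposition~\ref{0P1} every theorem of $\IL^-$ is valid in all $\IL^-$-frames, and the proof of that proposition in fact shows that Modus Ponens, Necessitation, $\R{1}$ and $\R{2}$ preserve validity in any \emph{fixed} $\IL^-$-frame. Now let $F$ be a finite $\IL^-$-frame in which all axioms of $\IL^-(\J{1},\J{2},\J{5})$ are valid; in particular the instances of $\J{1}$, $\J{2}$ and $\J{5}$ are valid in $F$ by hypothesis. A routine induction on the length of a proof in $\IL^-(\J{1},\J{2},\J{5})$ then shows that every theorem, in particular $A$, is valid in $F$. (By Propositions~\ref{1P2}, \ref{2P4} and \ref{5P1} these frames are precisely the finite Veltman frames of de Jongh and Veltman, and by Corollary~\ref{CLC1} the logic in question is exactly $\IL$.)

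For the converse I would argue by contraposition with a Henkin-style construction. Assume $\IL^-(\J{1},\J{2},\J{5}) \nvdash A$. Fix a finite set $\Phi$ of formulas adequate for $A$: it contains all subformulas of $A$, is closed under single negations, and, for every $B \rhd C \in \Phi$, also contains $\Box \neg C$ and the auxiliary formulas needed to witness failures of $\Box$- and $\rhd$-statements, so that all relevant reasoning stays inside $\Phi$. Let $W$ be the set of all maximal $\IL^-(\J{1},\J{2},\J{5})$-consistent subsets of $\Phi$; this set is finite, and it is non-empty since $\neg A$ belongs to some member of $W$. Define $R$ on $W$ by $x R y \iff \{B,\Box B : \Box B \in x\} \subseteq y$ and $\Box B \in y$ for some $\Box B \in \Phi$ with $\Box B \notin x$; finiteness of $\Phi$ together with the usual $\GL$-style rank argument makes $R$ transitive and conversely well-founded. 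The subtle part is the definition of the relations $S_x$ on $R[x]$: following de Jongh and Veltman one defines, for $y,z \in R[x]$, $y S_x z$ by a condition expressing that passing from $y$ to $z$ respects all interpretability obligations recorded in $x$, and then closes $S_x$ so that it is reflexive on $R[x]$, transitive, and contains every pair $\langle y,z\rangle$ with $x R y$ and $y R z$. Finally set $x \Vdash p \iff p \in x$.

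I would then verify two things. First, the frame conditions: $y S_x y$ whenever $x R y$ (so $\J{1}$ is valid in the frame, by Proposition~\ref{1P2}); each $S_x$ is transitive and $\J{4}$ is valid (so $\J{2}$ is valid, by Proposition~\ref{2P4}); and $x R y\ \&\ y R z \Rightarrow y S_x z$ (so $\J{5}$ is valid, by Proposition~\ref{5P1}); all three are built into the definition of $S_x$. Second, the Truth Lemma: for every $x \in W$ and every $B \in \Phi$, $x \Vdash B \iff B \in x$, by induction on $B$. The Boolean and $\Box$ cases are routine. The crucial case is $B \equiv C \rhd D$: if $C \rhd D \in x$ the definition of $S_x$ directly supplies, for any $S_x$-relevant $C$-world, an $S_x$-successor containing $D$; if $C \rhd D \notin x$ one must produce a $y$ with $x R y$, $C \in y$, and $z \nVdash D$ for every $z$ with $y S_x z$, and here the construction of $y$ and the precise shape of $S_x$ must dovetail. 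Granting the Truth Lemma, any member of $W$ containing $\neg A$ is a world in a finite $\IL^-$-frame validating all axioms of $\IL^-(\J{1},\J{2},\J{5})$ at which $A$ fails, which is the contrapositive of the claim.

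The main obstacle is exactly the simultaneous design of $R$ and of the family $\{S_x\}_{x \in W}$: the relations $S_x$ must be generous enough to be reflexive on $R[x]$, transitive, and to absorb every $R$-step below $x$ (the three frame conditions above), yet restrictive enough that, whenever $\neg(C \rhd D) \in x$, some $C$-world below $x$ has no $S_x$-successor forcing $D$ — otherwise the $\rhd$-case of the Truth Lemma breaks. Reconciling these competing demands is the heart of de Jongh and Veltman's argument; an equivalent route is to run a step-by-step ``construction model'' in which worlds are added one at a time to cure outstanding $\neg\Box$- and $\neg{\rhd}$-deficiencies, which automatically yields a finite model and is the form most readily subsumed under the general completeness theorem of Section~\ref{Sec:Compl}.
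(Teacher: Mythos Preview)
Your soundness argument is fine. The gap is in the completeness direction, and it lies exactly where you yourself locate ``the main obstacle'': taking the worlds to be bare $\Phi$-maximal consistent sets will not work for $\IL$. If $W$ consists only of such sets, then a single set $\Delta$ may have to play incompatible roles: as an $E$-critical successor of $\Gamma$ (produced to refute some $D \rhd E \in \Phi \setminus \Gamma$), every $S_\Gamma$-successor of $\Delta$ must avoid $E$; yet the very same $\Delta$ may also be reached as an $R$-successor of some $\Delta'$ with $\Gamma \prec \Delta' \prec \Delta$, and then the $\J{5}$-condition forces $\Delta' S_\Gamma \Delta$ and transitivity propagates further $S_\Gamma$-links through $\Delta$. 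Once you close $S_\Gamma$ under reflexivity, transitivity, and $R$-steps (as you propose), there is no uniform ``interpretability-obligation'' clause on unlabelled sets that simultaneously blocks $E$-successors at the critical copy of $\Delta$ and allows them at the others; the $(\Leftarrow)$-case of the Truth Lemma for $\rhd$ fails.

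The device that resolves this, both in de Jongh--Veltman and in the paper's own proof (Theorem~\ref{CT2}), is to build in multiplicity: worlds are pairs $\langle \Gamma, \tau \rangle$ where $\tau$ is a finite sequence of formulas from $\Phi_\rhd$, with $\langle \Gamma, \tau \rangle R \langle \Delta, \sigma \rangle$ requiring $\tau \subsetneq \sigma$. The tag $\tau$ records, in its entry just after $\tau$'s prefix corresponding to the current base world, which formula $C$ the present world is meant to be critical for. The relation $S_{\langle \Gamma, \tau\rangle}$ is then defined using this tag together with the $C$-critical-successor relation $\prec_C^*$ and the condition $\Box{\sim}C \in \Delta$; Lemmas~\ref{PL2}, \ref{PL3}.b and \ref{PL4}.c do the work in the Truth Lemma. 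Finiteness is recovered by bounding $|\tau|$ via $\rank(\Gamma) + |\tau| \leq \rank(\Gamma_0)$. Your ``step-by-step construction'' alternative could also supply the needed copies, but your main sketch, with unlabelled worlds and a closure definition of $S_x$, does not.
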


\begin{thm}[Ignatiev \cite{Ign91}]
For any formula $A$, the following are equivalent: 
\begin{enumerate}
	\item $\IL^-(\J{1}, \J{2}) \vdash A$. 
	\item $A$ is valid in all finite $\IL^-$-frames where all axioms of $\IL^-(\J{1}, \J{2})$ are valid. 
\end{enumerate}
\end{thm}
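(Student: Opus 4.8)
The plan is to deduce this from Ignatiev's original completeness theorem for $\CL$ by matching up the two semantics. Recall that a $\CL$-frame is a triple $\langle W, R, \{S_x\}_{x \in W}\rangle$ in which $\langle W, R\rangle$ is a $\GL$-frame and each $S_x$ is a transitive and reflexive relation on $R[x]$, and that Ignatiev \cite{Ign91} proved that $\CL$ is sound and complete with respect to the class of all finite $\CL$-frames. Since, by Proposition \ref{CLP2}, $\IL^-(\J{1}, \J{2})$ and $\CL$ prove exactly the same formulas, it suffices to establish the following coincidence of frame classes: a finite $\IL^-$-frame validates all axioms of $\IL^-(\J{1}, \J{2})$ if and only if it is a finite $\CL$-frame.

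For the coincidence, first note that by Proposition \ref{0P1} all axioms of $\IL^-$ are valid in every $\IL^-$-frame, so ``all axioms of $\IL^-(\J{1}, \J{2})$ are valid in $F$'' reduces to ``$\J{1}$ and $\J{2}$ are valid in $F$''. Now let $F = \langle W, R, \{S_x\}_{x\in W}\rangle$ be a finite $\IL^-$-frame. If $\J{1}$ and $\J{2}$ are valid in $F$, then by Proposition \ref{1P2} we get $x R y \Rightarrow y S_x y$, so each $S_x$ is reflexive on $R[x]$; by Proposition \ref{2P4} (the implication $(2)\Rightarrow(3)$) each $S_x$ is transitive and $\J{4}$ is valid in $F$; and by Proposition \ref{4P4} (the implication $(2)\Rightarrow(3)$) we get $y S_x z \Rightarrow x R z$, so combined with the $\IL^-$-frame condition $y S_x z \Rightarrow x R y$ this makes each $S_x$ a relation on $R[x]$. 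Hence $F$ is a finite $\CL$-frame. Conversely, if $F$ is a finite $\CL$-frame then each $S_x \subseteq R[x]\times R[x]$, so $y S_x z \Rightarrow x R y$ and $F$ is a finite $\IL^-$-frame; reflexivity of $S_x$ on $R[x]$ yields validity of $\J{1}$ by Proposition \ref{1P2}, while transitivity of $S_x$ together with $y S_x z \Rightarrow x R z$ yields validity of $\J{4}$, hence of $\J{2}$, by Propositions \ref{4P4} and \ref{2P4}. Chaining equivalences,
\begin{align*}
\IL^-(\J{1}, \J{2}) \vdash A
&\iff \CL \vdash A \\
&\iff A \text{ is valid in all finite } \CL\text{-frames} \\
&\iff A \text{ is valid in all finite } \IL^-\text{-frames validating } \J{1} \text{ and } \J{2},
\end{align*}
which is the assertion of the theorem. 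The companion theorem of de Jongh and Veltman \cite{deJVel90} for $\IL$ is handled the same way, additionally using Corollary \ref{CLC1} and Proposition \ref{5P1} to match $\J{5}$ with the Veltman-frame property $(\ast)$, namely $x R y\ \&\ y R z \Rightarrow y S_x z$.

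The only substantial ingredient imported from outside is Ignatiev's completeness theorem itself, whose proof runs along Henkin-style lines: given $\CL \nvdash A$, one builds a finite tree of maximal $\CL$-consistent subsets of a suitable finite adequate set, defines $R$ along the tree, and then defines each $S_x$ so that it is reflexive and transitive on $R[x]$ while witnessing every demand $B \rhd C$ present at the relevant node. Were one to reprove this from scratch rather than cite it, I expect the main obstacle to be exactly that last design step: arranging the $S_x$ to be simultaneously reflexive on $R[x]$, transitive, contained in $R[x]\times R[x]$, and rich enough to realize all $\rhd$-statements, while still securing the truth lemma. Since Section \ref{Sec:Compl} of this paper develops a general completeness method that subsumes this logic, the reduction sketched above is the most economical route at the present stage.
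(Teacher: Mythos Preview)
Your proposal is correct. At the point where this theorem appears in the paper it is simply cited to Ignatiev as a restatement, with no proof given; your argument supplies exactly the semantic identification (via Propositions \ref{1P2}, \ref{2P4}, \ref{4P4}) needed to justify that restatement, and the chain of equivalences is sound.

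The paper does, however, later give its own self-contained proof as a special case of Theorem \ref{CT1}, and that route is genuinely different from yours. There the authors build the countermodel directly: worlds are pairs $\langle \Gamma, B\rangle$ with $\Gamma$ a $\Phi$-maximal $L$-consistent set and $B\in\Phi_\rhd$, $R$ is induced by $\prec$, and the relation $S_{\langle\Gamma,B\rangle}$ is defined via the condition $\mathcal{C}_L$ tailored to $\CL$ (namely: $\langle\Gamma,B\rangle R \langle\Theta,D\rangle$, and if $\Gamma\prec_C\Delta$ then $D\equiv C$, $\Gamma\prec_C\Theta$, ${\sim}C\in\Theta$). The existence lemmas of Section \ref{Sec:Lem} then drive the Truth Lemma. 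Your reduction is more economical given that Ignatiev's result is available and the frame-correspondence propositions are already in hand; the paper's direct construction buys a uniform treatment of ten logics at once, and in particular yields a somewhat simpler countermodel for $\CL$ than Ignatiev's original (pairs $\langle\Gamma,B\rangle$ rather than longer tagged sequences), as the authors themselves note.
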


In the following, we identify $\CL$ with $\IL^-(\J{1}, \J{2})$, and $\IL$ with $\IL^-(\J{1}, \J{2}, \J{5})$.

\section{Lemmas for proofs of modal completeness theorems}\label{Sec:Lem}

In this section, we prepare some definitions and lemmas for our proofs of the modal completeness theorems of several logics. 
In this section, let $L$ be any consistent logic containing $\IL^-$. 
For a set $\Phi$ of formulas, define $\Phi_{\rhd} : = \{B :$ there exists a formula $C$ such that either $B \rhd C \in \Phi$ or $C \rhd B \in \Phi\}$. 
For each formula $A$, let ${\sim}A : \equiv \begin{cases} B & \text{if}\ A\ \text{is of the form}\ \neg B \\ \neg A & \text{otherwise} \end{cases}$. 
We say a finite set $\Gamma$ of formulas is \textit{$L$-consistent} if $L \nvdash \bigwedge \Gamma \to \bot$, where $\bigwedge \Gamma$ is a conjunction of all elements of $\Gamma$. 
Also we say $\Gamma \subseteq \Phi$ is \textit{$\Phi$-maximally $L$-consistent} if $\Gamma$ is $L$-consistent and for any $A \in \Phi$, either $A \in \Gamma$ or ${\sim}A \in \Gamma$. 
Notice that if $\Gamma$ is $\Phi$-maximally $L$-consistent and $L \vdash \bigwedge \Gamma \to A$ for $A \in \Phi$, then $A \in \Gamma$. 

\begin{defn}
A set $\Phi$ of formulas is said to be \textit{adequate} if it satisfies the following conditions: 
\begin{enumerate}
	\item $\Phi$ is closed under taking subformulas and applying $\sim$; 
	\item $\bot \in \Phi_\rhd$; 
	\item If $B, C \in \Phi_\rhd$, then $B \rhd C \in \Phi$; 
	\item If $B \in \Phi_\rhd$, then $\Box {\sim}B \in \Phi$; 
	\item If $B, C_1, \ldots, C_m, D_1, \ldots, D_n \in \Phi_\rhd$, then $\Box \left(B \to \bigvee_{i = 1}^m C_i \lor \bigvee_{j = 1}^n \Diamond D_j \right) \in \Phi$. 
\end{enumerate}
\end{defn}
Note that $\Box$ is in our language as a symbol, and $\Box A$ is not an abbreviation for $(\neg A) \rhd \bot$. Then the following proposition holds. 

\begin{prop}\label{PP1}
Every finite set of formulas is contained in some finite adequate set. 
\end{prop}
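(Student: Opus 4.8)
The plan is to prove Proposition \ref{PP1} by showing that starting from an arbitrary finite set $\Phi_0$ of formulas, one can close it off under the five conditions defining adequacy in finitely many steps, with the process provably terminating. The key observation that makes termination possible is that conditions 2--5 only ever add formulas built out of the members of $(\Phi)_\rhd$, which itself is controlled by the formulas of the form $B \rhd C$ already present; and crucially, closing under subformulas (condition 1) of the newly added $\Box$-formulas from conditions 4 and 5 does not generate any \emph{new} $\rhd$-subformulas, because those $\Box$-formulas contain no occurrence of $\rhd$ at all (they are $\Box$ applied to a Boolean combination of the $C_i$, $D_j$ and $B$, where $\Diamond D_j$ unfolds to $\neg\Box\neg D_j$). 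So the set $(\Phi)_\rhd$ stabilizes quickly.

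Concretely, I would proceed as follows. First, let $\Phi_1$ be the closure of $\Phi_0$ under subformulas and under $\sim$; this is finite since each formula has finitely many subformulas and $\sim$ applied twice returns essentially the same formula (up to the obvious case distinction), so condition 1 contributes only boundedly many formulas. Next, I would identify the finite set $T := (\Phi_1)_\rhd \cup \{\bot\}$ of ``antecedent/consequent'' formulas; the point is that $(\Phi_1)_\rhd$ is already a finite set of formulas each of which is a subformula of something in $\Phi_1$, hence all of bounded size. Then define $\Phi$ to be the union of $\Phi_1$ together with: (i) $\{B \rhd C : B, C \in T\}$ (condition 3, noting $\bot \in T$ handles condition 2); (ii) $\{\Box{\sim}B : B \in T\}$ (condition 4); (iii) all formulas $\Box\left(B \to \bigvee_{i=1}^m C_i \lor \bigvee_{j=1}^n \Diamond D_j\right)$ with $B, C_i, D_j \in T$ (condition 5) --- there are finitely many of these since $T$ is finite and the disjunctions range over subsets of $T$; and finally (iv) the closure of everything added in (i)--(iii) under subformulas and $\sim$. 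The resulting $\Phi$ is finite and contains $\Phi_0$.

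It then remains to check that this $\Phi$ is genuinely adequate, i.e.\ that the single pass above is already closed. The main point to verify --- and the step I expect to require the most care --- is that $(\Phi)_\rhd = T$, or at least that $(\Phi)_\rhd \subseteq T$, so that conditions 2--5 applied to $\Phi$ do not demand anything beyond what was added. This reduces to checking that none of the formulas added in (ii), (iii), (iv), nor their subformulas or $\sim$-images, is of the form $B' \rhd C'$ or occurs as the left/right argument of a $\rhd$. For (ii) and (iii) this is clear: $\Box{\sim}B$ and the big $\Box$-formula of condition 5 have no $\rhd$ anywhere inside them (every subformula of $\Diamond D_j = \neg\Box\neg D_j$ is either such a negation/box or a subformula of $D_j \in T \subseteq (\Phi_1)_\rhd$, and subformulas of members of $(\Phi_1)_\rhd$ lie in $\Phi_1$ already by condition 1 on $\Phi_1$). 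For (i), the formulas $B \rhd C$ do contribute $B$ and $C$ to $(\Phi)_\rhd$, but $B, C \in T$ by construction, so nothing new appears. Hence $(\Phi)_\rhd = T$ (the inclusion $T \subseteq (\Phi)_\rhd$ holds because every $B \in (\Phi_1)_\rhd$ arose from some $\rhd$-formula already in $\Phi_1 \subseteq \Phi$, and $\bot \in (\Phi)_\rhd$ by (i) with $C = \bot$), and one verifies conditions 1--5 for $\Phi$ directly from its definition. A clean way to package the argument is to define a monotone operator sending a set $\Psi$ to its one-step closure under 1--5, observe it preserves finiteness, and show that $\Phi$ above is a fixed point; the termination/finiteness is exactly the content of the remarks about $(\Phi)_\rhd$ stabilizing.
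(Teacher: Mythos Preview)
The paper does not supply a proof of this proposition at all; it is stated and immediately used. Your proposal therefore goes well beyond the paper by actually carrying out the construction, and your overall strategy---close under subformulas and ${\sim}$ first, freeze $T := (\Phi_1)_\rhd \cup \{\bot\}$, add in one pass all formulas demanded by conditions 2--5 relative to $T$, close again under subformulas and ${\sim}$, and then verify that $(\Phi)_\rhd$ has not grown beyond $T$---is correct and is the natural way to do it.

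One phrasing issue is worth tightening. You write that $\Box{\sim}B$ and the big $\Box$-formula of condition~5 ``have no $\rhd$ anywhere inside them''. That is not literally true: elements $B, C_i, D_j$ of $T$ may themselves contain $\rhd$ (e.g.\ if $(p \rhd q) \rhd r$ was in the original set, then $p \rhd q \in T$). What you need, and what your parenthetical actually argues, is the weaker statement that every $\rhd$-formula occurring as a subformula of these newly added $\Box$-formulas already lies in $\Phi_1$; this holds because any such $\rhd$-subformula must sit inside some $B, C_i, D_j \in T \subseteq \Phi_1 \cup \{\bot\}$, and $\Phi_1$ is closed under subformulas. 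With that correction the verification that $(\Phi)_\rhd = T$ goes through, and conditions 1--5 for $\Phi$ follow exactly as you outline. You should also make explicit that in condition~5 you take one formula per choice of $B$ and per pair of \emph{subsets} of $T$ (with a fixed ordering in the disjunctions), so that only finitely many formulas are added; you gesture at this, but it is what guarantees finiteness.
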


Until the end of this section, we fix some finite adequate set $\Phi$. 
Let $K_L : = \{\Gamma \subseteq \Phi : \Gamma$ is $\Phi$-maximally $L$-consistent$\}$. 
Then $K_L$ is also a finite set. 

\begin{defn}
Let $\Gamma, \Delta \in K_L$ and $C \in \Phi_\rhd$. 
\begin{enumerate}
	\item $\Gamma \prec \Delta : \iff$ 1. for any $\Box B \in \Phi$, if $\Box B \in \Gamma$, then $B, \Box B \in \Delta$ and 2. there exists $\Box B \in \Phi$ such that $\Box B \notin \Gamma$ and $\Box B \in \Delta$. 
	\item $\Gamma \prec_C \Delta : \iff \Gamma \prec \Delta$ and for any $B \in \Phi$, if $B \rhd C \in \Gamma$, then ${\sim}B \in \Delta$. 
	\item $\Gamma \prec_C^* \Delta : \iff \Gamma \prec \Delta$ and for any $B \in \Phi$, if $B \rhd C \in \Gamma$, then ${\sim}B, \Box {\sim}B \in \Delta$. 
\end{enumerate}
\end{defn}

The relation $\prec_C^*$ was introduced by de Jongh and Veltman \cite{deJVel90}, and $\Gamma \prec_C^* \Delta$ is read as ``$\Delta$ is a \textit{$C$-critical successor} of $\Gamma$''\footnote{For every set $S$ of formulas, more general notion of assuring successor $\prec_S$ was introduced and investigated in Goris et al.~\cite{GBJM20}. 
Then $\prec_C^*$ is exactly $\prec_{\{\neg C\}}$. 
However, in this paper, $\prec_C$ and $\prec_C^*$ are sufficient for our purpose. }. 
The relation $\prec_C$ was introduced by Ingatiev \cite{Ign91}. 
Obviously, $\Gamma \prec_C^* \Delta$ implies $\Gamma \prec_C \Delta$. 

\begin{lem}\label{PL1}
For $\Gamma, \Delta \in K_L$, if $\Gamma \prec \Delta$, then $\Gamma \prec_{\bot}^* \Delta$. 
\end{lem}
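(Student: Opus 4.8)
The plan is to observe that, via the axiom $\J{6}$, the formula $B \rhd \bot$ is equivalent over $\IL^-$ to $\Box {\sim}B$, and then to exploit the fact that the relation $\prec$ already transports boxed formulas from $\Gamma$ to $\Delta$ together with their contents, so that no extra work beyond bookkeeping with the adequate set is needed.

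First I would record the equivalence $\IL^- \vdash B \rhd \bot \leftrightarrow \Box {\sim}B$. Indeed, $\neg {\sim}B$ is a tautological consequence of $B$ and vice versa (one checks the two cases in the definition of ${\sim}$), so Proposition \ref{0P2} gives $\IL^- \vdash B \rhd \bot \leftrightarrow (\neg {\sim}B) \rhd \bot$, and the axiom $\J{6}$ gives $\IL^- \vdash (\neg {\sim}B) \rhd \bot \leftrightarrow \Box {\sim}B$. Since $L$ contains $\IL^-$, in particular $L \vdash \bigwedge \Gamma \to \Box {\sim}B$ whenever $B \rhd \bot \in \Gamma$.

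Now suppose $\Gamma \prec \Delta$. By definition, $\Gamma \prec_{\bot}^* \Delta$ requires in addition only that ${\sim}B, \Box {\sim}B \in \Delta$ for every $B \in \Phi$ with $B \rhd \bot \in \Gamma$, so fix such a $B$. Since $B \rhd \bot \in \Gamma \subseteq \Phi$, we have $B \in \Phi_{\rhd}$, whence clause 4 in the definition of adequate set yields $\Box {\sim}B \in \Phi$. Combining $L \vdash \bigwedge \Gamma \to \Box {\sim}B$ with the $\Phi$-maximal $L$-consistency of $\Gamma$ gives $\Box {\sim}B \in \Gamma$. Applying clause 1 of the definition of $\Gamma \prec \Delta$ to $\Box {\sim}B \in \Phi \cap \Gamma$, we obtain ${\sim}B, \Box {\sim}B \in \Delta$, as required.

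There is no genuine obstacle in this argument: the only point that must not be overlooked is that $\Box {\sim}B$ has to lie in the adequate set $\Phi$ before $\Phi$-maximal consistency of $\Gamma$ can be used to place it in $\Gamma$, and this membership is precisely what clause 4 of adequacy guarantees once we know $B \in \Phi_{\rhd}$.
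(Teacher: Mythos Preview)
Your proof is correct and follows essentially the same approach as the paper's own proof. The paper's argument is more terse—it simply asserts $\Box {\sim}B \in \Gamma$ ``by $\J{6}$'' and then ${\sim}B, \Box {\sim}B \in \Delta$—while you spell out the tautological passage from $B \rhd \bot$ to $(\neg {\sim}B) \rhd \bot$ via Proposition~\ref{0P2} and explicitly verify $\Box {\sim}B \in \Phi$ through clause~4 of adequacy before invoking $\Phi$-maximal consistency.
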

\begin{proof}
Suppose $\Gamma \prec \Delta$.  
If $B \rhd \bot \in \Gamma$, then $\Box {\sim}B \in \Gamma$ by $\J{6}$. 
Then ${\sim}B, \Box {\sim}B \in \Delta$. 
This means $\Gamma \prec_{\bot}^* \Delta$. 
\end{proof}

\begin{lem}\label{PL2}
Let $\Gamma, \Delta, \Theta \in K_L$ and $C \in \Phi_\rhd$. 
If $\Gamma \prec_C^* \Delta$ and $\Delta \prec \Theta$, then $\Gamma \prec_C^* \Theta$. 
\end{lem}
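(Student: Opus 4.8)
The goal is to show that the relation $\prec_C^*$ composes on the right with $\prec$, i.e. that a $C$-critical successor of $\Gamma$ followed by a $\prec$-successor is again a $C$-critical successor of $\Gamma$. The plan is to verify the two defining clauses of $\Gamma \prec_C^* \Theta$ in turn: first that $\Gamma \prec \Theta$, and then that for every $B \in \Phi$ with $B \rhd C \in \Gamma$ we have ${\sim}B, \Box{\sim}B \in \Theta$.

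First I would handle transitivity of $\prec$. From $\Gamma \prec_C^* \Delta$ we get $\Gamma \prec \Delta$, and we are given $\Delta \prec \Theta$. For clause~1 of $\Gamma \prec \Theta$: if $\Box B \in \Phi$ and $\Box B \in \Gamma$, then $\Gamma \prec \Delta$ gives $B, \Box B \in \Delta$; in particular $\Box B \in \Delta$, so $\Delta \prec \Theta$ gives $B, \Box B \in \Theta$. For clause~2: $\Delta \prec \Theta$ already supplies some $\Box B \in \Phi$ with $\Box B \notin \Delta$ and $\Box B \in \Theta$; since $\Gamma \prec \Delta$ forces every boxed formula in $\Gamma$ (lying in $\Phi$) to reappear in $\Delta$, from $\Box B \notin \Delta$ we conclude $\Box B \notin \Gamma$, which witnesses clause~2 for $\Gamma \prec \Theta$. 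Hence $\Gamma \prec \Theta$.

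Next I would handle the critical-successor clause. Suppose $B \in \Phi$ and $B \rhd C \in \Gamma$. Since $\Gamma \prec_C^* \Delta$, we have ${\sim}B, \Box{\sim}B \in \Delta$. Now $\Box{\sim}B$ lies in $\Phi$: this is exactly the role of the adequacy condition that $\Box{\sim}B \in \Phi$ whenever $B \in \Phi_\rhd$ (and $B \in \Phi_\rhd$ because $B \rhd C \in \Gamma \subseteq \Phi$). So $\Box{\sim}B \in \Phi$ and $\Box{\sim}B \in \Delta$, whence $\Delta \prec \Theta$ gives ${\sim}B, \Box{\sim}B \in \Theta$. Since $B$ was arbitrary, this establishes the second clause, and therefore $\Gamma \prec_C^* \Theta$.

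The argument is essentially bookkeeping with the definitions; the only point that requires care — and the step I expect to be the crux — is noticing that $\Box{\sim}B$ is guaranteed to be in the adequate set $\Phi$, so that the clause-1 transfer along $\Delta \prec \Theta$ can be applied to it. This is precisely why adequacy includes closure of $\Phi$ under $\Box{\sim}(\cdot)$ on elements of $\Phi_\rhd$, and without it the lemma would fail. (Note that the analogous statement for $\prec_C$ in place of $\prec_C^*$ does \emph{not} follow this way, since ${\sim}B$ alone need not be boxed; this is one reason the stronger relation $\prec_C^*$ is used in the completeness construction.)
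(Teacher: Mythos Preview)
Your proof is correct and follows the same approach as the paper's, though more explicitly: the paper's three-line proof tacitly assumes transitivity of $\prec$ and that $\Box{\sim}B \in \Phi$, while you spell both of these out (the latter via adequacy condition~4). Your observation that the analogous statement fails for $\prec_C$ is a nice addition.
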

\begin{proof}
Suppose $\Gamma \prec_C^* \Delta$ and $\Delta \prec \Theta$. 
If $B \rhd C \in \Gamma$, then $\Box {\sim}B \in \Delta$. 
Then ${\sim}B, \Box {\sim}B \in \Theta$. 
Therefore $\Gamma \prec_C^* \Theta$. 
\end{proof}

\begin{lem}\label{PL3}
Let $\Gamma \in K_L$ and $D, E \in \Phi_\rhd$. 
If $D \rhd E \notin \Gamma$, then there exists $\Delta \in K_L$ such that $D \in \Delta$ and $\Gamma \prec_E \Delta$. 
Moreover: 
\renewcommand{\theenumi}{\alph{enumi}}
\begin{enumerate}
	\item If $L$ contains $\J{5}$, then we can find $\Delta$ such that in addition $\Box {\sim}E \in \Delta$ holds. 
	\item If $L$ contains $\J{2}$ and $\J{5}$, then we can find $\Delta$ such that in addition $\Gamma \prec_E^* \Delta$ and $\Box {\sim}E \in \Delta$ hold. 
\end{enumerate}
\renewcommand{\theenumi}{\arabic{enumi}}
\end{lem}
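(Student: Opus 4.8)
My plan is to handle all three assertions with one argument, the only variation being the choice of an auxiliary ``one-step'' formula $\theta$ whose box will simultaneously witness strictness of $\prec$ and pack into $\Delta$ all the formulas that the conclusion demands. First I would fix enumerations $C_1,\dots,C_n$ of $\{C\in\Phi : C\rhd E\in\Gamma\}$ and $\Box X_1,\dots,\Box X_m$ of $\{\Box X\in\Phi : \Box X\in\Gamma\}$, and set $\theta := D\to\bigvee_{j\le n}C_j$ for the unadorned statement, $\theta := D\to\bigl(\bigvee_{j\le n}C_j\lor\Diamond E\bigr)$ for~(a), and $\theta := D\to\bigl(\bigvee_{j\le n}C_j\lor\bigvee_{j\le n}\Diamond C_j\lor\Diamond E\bigr)$ for~(b). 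The point of this shape is twofold: $\Box\theta$ lies in $\Phi$ by the fifth clause of the definition of adequate set (take $B:=D$, with the diamond-disjuncts among $C_1,\dots,C_n,E\in\Phi_\rhd$), and $\IL^-$ proves $\neg\theta$ equivalent to the conjunction of $D$, of all the ${\sim}C_j$, in cases~(a),(b) also of $\Box{\sim}E$, and in case~(b) also of all the $\Box{\sim}C_j$ — and each of these conjuncts is in $\Phi$ (the boxed ones by the fourth clause of adequacy).

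The heart of the proof is the claim $\Box\theta\notin\Gamma$, and I would establish it by contradiction. Assuming $\Box\theta\in\Gamma$: by $\J{6}$, $\IL^-\vdash\Box\theta\leftrightarrow(\neg\theta)\rhd\bot$, so $\bigwedge\Gamma\vdash_L(\neg\theta)\rhd\bot$, and then $\bigwedge\Gamma\vdash_L(\neg\theta)\rhd E$ by rule $\R{1}$ (using $\IL^-\vdash\bot\to E$). Now each remaining disjunct of $\theta$ collapses to $E$ over $\Gamma$: $C_j\rhd E\in\Gamma$ by choice of $C_j$; $\Diamond E\rhd E$ is an instance of $\J{5}$; and $\Diamond C_j\rhd E$ follows from the $\J{5}$-instance $\Diamond C_j\rhd C_j$ together with $C_j\rhd E\in\Gamma$ by $\J{2}$. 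Finitely many applications of $\J{3}$ then give $\bigwedge\Gamma\vdash_L\bigl(\neg\theta\lor\bigvee_j C_j\lor\bigvee_j\Diamond C_j\lor\Diamond E\bigr)\rhd E$ (only the disjuncts occurring in the relevant $\theta$ being present), and since $\IL^-$ proves $D\to\bigl(\neg\theta\lor\bigvee_j C_j\lor\bigvee_j\Diamond C_j\lor\Diamond E\bigr)$ by a trivial case split on which conjunct of $\neg\theta$ fails, rule $\R{2}$ yields $\bigwedge\Gamma\vdash_L D\rhd E$. As $D\rhd E\in\Phi$ and $\Gamma$ is $\Phi$-maximally $L$-consistent, $D\rhd E\in\Gamma$, contradicting the hypothesis. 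Observe that the unadorned claim uses only $\J{3}$ (besides $\J{6},\R{1},\R{2}$), part~(a) uses $\J{5}$ as well, and part~(b) uses $\J{5}$ and $\J{2}$ as well — precisely the hypotheses in the statement.

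With $\Box\theta\notin\Gamma$ in hand, I would finish with the standard $\GL$-style construction. Let $\Theta$ be $\{D,\Box\theta\}\cup\{X_i,\Box X_i : i\le m\}\cup\{{\sim}C_j : j\le n\}$, together with $\Box{\sim}E$ in cases~(a),(b) and all $\Box{\sim}C_j$ in case~(b); all its members lie in $\Phi$. Since $\bigwedge\Theta$ is $L$-equivalent to $\neg\theta\land\Box\theta\land\bigwedge_i(X_i\land\Box X_i)$, an $L$-inconsistency of $\Theta$ would give $L\vdash\bigwedge_i(X_i\land\Box X_i)\land\Box\theta\to\theta$; necessitating, distributing $\Box$, and using $\Box X_i\to\Box\Box X_i$ and $\bigwedge_i\Box X_i\to\Box\bigwedge_i X_i$ one gets $L\vdash\bigwedge_i\Box X_i\to\Box(\Box\theta\to\theta)$, hence $L\vdash\bigwedge_i\Box X_i\to\Box\theta$ by $\G{3}$, and therefore $\Box\theta\in\Gamma$, a contradiction. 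So $\Theta$ is $L$-consistent; I extend it to some $\Delta\in K_L$. Then $D\in\Delta$; every $\Phi$-formula provably entailed by $\neg\theta$ belongs to $\Delta$, in particular each ${\sim}C_j$, each $\Box{\sim}C_j$ in case~(b), and $\Box{\sim}E$ in cases~(a),(b); the inclusion $\{X_i,\Box X_i : i\le m\}\subseteq\Delta$ yields clause~1 of $\Gamma\prec\Delta$; and $\Box\theta\in\Delta\setminus\Gamma$ with $\Box\theta\in\Phi$ yields clause~2. Reading off, $\Gamma\prec_E\Delta$ with $D\in\Delta$ in all cases, additionally $\Box{\sim}E\in\Delta$ in case~(a), and additionally $\Box{\sim}E\in\Delta$ and $\Gamma\prec_E^*\Delta$ in case~(b).

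The main obstacle is really the second paragraph: one has to engineer $\theta$ so that $\neg\theta$ forces exactly the formulas wanted in $\Delta$, so that every diamond-disjunct of $\theta$ can be ``absorbed into $E$'' using only the axioms available for that part, and so that $\Box\theta$ nonetheless stays inside $\Phi$ — which is exactly the role of the boxed disjunctions permitted by the fifth adequacy clause. Everything else, including the $\GL$-bookkeeping and the $\Phi$-membership verifications, is routine.
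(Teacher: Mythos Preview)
Your proposal is correct and follows essentially the same approach as the paper: the paper likewise sets $X=\{G:G\rhd E\in\Gamma\}$, takes the formula $\Box(D\to\bigvee X)$ (respectively with $\Diamond E$ added for~(a) and with $\{\Diamond G:G\in X\}\cup\{\Diamond E\}$ added for~(b)), shows it lies outside $\Gamma$, and then runs the standard L\"ob argument on the corresponding set $Y_i$. The only cosmetic difference is that the paper derives $\Box\theta\notin\Gamma$ via Proposition~\ref{0P3}.2 (the derived scheme $\Box(A\to B)\to(B\rhd C\to A\rhd C)$) rather than your equivalent route through $\J{6}$, the tautology $D\to\neg\theta\lor\text{(consequent of }\theta)$, and $\R{2}$.
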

\begin{proof}
Suppose $D \rhd E \notin \Gamma$. 
Let $X : = \{G : G \rhd E \in \Gamma\}$. 
Then $\Box(D \to \bigvee X) \in \Phi$. 
By $\J{3}$, we have $\IL^- \vdash \bigwedge \Gamma \to \bigvee X \rhd E$. 

\begin{itemize}
\item Suppose, for the contradiction, that $\Box(D \to \bigvee X) \in \Gamma$. 
Then $\IL^- \vdash \bigwedge \Gamma \to (\bigvee X \rhd E \to D \rhd E)$ by Proposition \ref{0P3}.2. 
Hence $\IL^- \vdash \bigwedge \Gamma \to D \rhd E$, and thus $D \rhd E \in \Gamma$. 
This contradicts our supposition. 
Therefore $\Box(D \to \bigvee X) \notin \Gamma$. 

Let 
\[
	Y_0 :=\{B, \Box B : \Box B \in \Gamma\} \cup \{D, \Box(D \to \bigvee X)\} \cup \{{\sim}G : G \in X\}, 
\]
then $Y_0 \subseteq \Phi$. 
Suppose that the set $Y_0$ were $L$-inconsistent. 
Then for some $\Box B_1, \ldots, \Box B_k \in \Gamma$, 
\begin{align*}
	L & \vdash \bigwedge_{i = 1}^k (B_i \land \Box B_i) \to (\Box(D \to \bigvee X) \to (D \to \bigvee X)),\\
	L & \vdash \bigwedge_{i = 1}^k \Box B_i \to \Box (\Box(D \to \bigvee X) \to (D \to \bigvee X)),\\
	L & \vdash \bigwedge \Gamma \to \Box (D \to \bigvee X). 
\end{align*}
Thus $\Box (D \to \bigvee X) \in \Gamma$, and this is a contradiction. 
We have shown that $Y_0$ is $L$-consistent. 

Let $\Delta \in K_L$ be such that $Y_0 \subseteq \Delta$. 
Then $D \in \Delta$. 
Since $\Box(D \to \bigvee X) \in \Delta \setminus \Gamma$, $\Gamma \prec \Delta$. 
Moreover, if $G \rhd E \in \Gamma$, then $G \in X$, and hence ${\sim}G \in \Delta$. 
This means $\Gamma \prec_E \Delta$. 

\item a. Assume that $L$ contains $\J{5}$. 
Let $X_1 : = X \cup \{\Diamond E\}$. 
Then $\Box(D \to \bigvee X_1) \in \Phi$. 
If $\Box(D \to \bigvee X_1) \in \Gamma$, then $\IL^- \vdash \bigwedge \Gamma \to (\bigvee X_1 \rhd E \to D \rhd E)$. 
Since $\IL^- \vdash \bigwedge \Gamma \to \bigvee X \rhd E$ and $L \vdash \Diamond E \rhd E$ by $\J{5}$, we obtain $L \vdash \bigwedge \Gamma \to \bigvee X_1 \rhd E$. 
Thus $L \vdash \bigwedge \Gamma \to D \rhd E$ and $D \rhd E \in \Gamma$. 
This is a contradiction. 
Therefore $\Box(D \to \bigvee X_1) \notin \Gamma$. 

Let 
\[
	Y_1 :=\{B, \Box B : \Box B \in \Gamma\} \cup \{D, \Box(D \to \bigvee X_1)\} \cup \{{\sim}G : G \in X\} \cup \{\Box {\sim}E\}. 
\]
Then it can be proved that $Y_1$ is also an $L$-consistent subset of $\Phi$ as above. 
Let $\Delta \in K_L$ be such that $Y_1 \subseteq \Delta$. 
Then $\Delta$ satisfies the required conditions.  

\item b. Assume that $L$ contains $\J{2}$ and $\J{5}$. 
Let $X_2 : = X \cup \{\Diamond G : G \in X\} \cup \{\Diamond E\}$. 
Then $\Box(D \to \bigvee X_2) \in \Phi$. 
For each $G \in X$, we have $L \vdash \bigwedge \Gamma \to (\Diamond G \rhd G) \land (G \rhd E)$ by $\J{5}$. 
Then by $\J{2}$, $L \vdash \bigwedge \Gamma \to \Diamond G \rhd E$. 
Therefore we obtain $L \vdash \bigwedge \Gamma \to \bigwedge_{G \in X} (\Diamond G \rhd E)$. 
Since we also have $\IL^- \vdash \bigwedge \Gamma \to \bigvee X \rhd E$ and $L \vdash \Diamond E \rhd E$, we get $L \vdash \bigwedge \Gamma \to \bigvee X_2 \rhd E$. 
This implies $\Box(D \to \bigvee X_2) \notin \Gamma$. 

Let 
\[
	Y_2 :=\{B, \Box B : \Box B \in \Gamma\} \cup \{D, \Box(D \to \bigvee X_2)\} \cup \{{\sim}G, \Box {\sim}G : G \in X\} \cup \{\Box {\sim}E\}. 
\]
Then $Y_2$ is also an $L$-consistent subset of $\Phi$, and any $\Delta \in K_L$ with $Y_2 \subseteq \Delta$ is a desired set. 
\end{itemize}
\end{proof}

\begin{lem}\label{PL4}
Let $\Gamma, \Delta \in K_L$ and $D, E, F \in \Phi_\rhd$. 
If $D \rhd E \in \Gamma$, $\Gamma \prec_F \Delta$ and $D \in \Delta$, then there exists $\Theta \in K_L$ such that $E \in \Theta$ and ${\sim}F \in \Theta$. 
Moreover: 
\renewcommand{\theenumi}{\alph{enumi}}
\begin{enumerate}
	\item If $L$ contains $\J{4}_+$, then we can find $\Theta$ such that in addition $\Gamma \prec \Theta$ holds. 
	\item If $L$ contains $\J{2}_+$, then we can find $\Theta$ such that in addition $\Gamma \prec_F \Theta$ holds. 
	\item If $L$ contains $\J{2}_+$ and $\J{5}$, then we can find $\Theta$ such that in addition $\Gamma \prec_F^* \Theta$ and $\Box {\sim}F \in \Theta$ hold. 
\end{enumerate}
\renewcommand{\theenumi}{\arabic{enumi}}
\end{lem}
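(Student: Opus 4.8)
The plan is to mimic the construction in Lemma \ref{PL3}, but this time we build a $\Phi$-maximally $L$-consistent set $\Theta$ containing $E$ and ${\sim}F$, starting from the fact that $D \rhd E \in \Gamma$, $\Gamma \prec_F \Delta$, and $D \in \Delta$. First I would set $X := \{G : G \rhd F \in \Gamma\}$, so that $\Gamma \prec_F \Delta$ gives ${\sim}G \in \Delta$ for every $G \in X$, and I would consider the candidate seed set
\[
	Y_0 := \{B, \Box B : \Box B \in \Gamma\} \cup \{E, {\sim}F\} \cup \{{\sim}G : G \in X\}.
\]
The crux is to show $Y_0$ is $L$-consistent. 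If it were not, then for some $\Box B_1, \dots, \Box B_k \in \Gamma$ we would have $L \vdash \bigwedge_i (B_i \land \Box B_i) \land E \land {\sim}F \to \bigvee X$, hence (working inside a $\Box$ and using that $\Gamma$ contains each $\Box B_i$ together with $G2$, $G3$) $L \vdash \bigwedge \Gamma \to \Box(E \land {\sim}F \to \bigvee X)$, which by propositional reasoning is $L \vdash \bigwedge \Gamma \to \Box(E \to \bigvee X \lor F)$. Combining with $D \rhd E \in \Gamma$ via $\J{4}_+$-style reasoning available already in $\IL^-$ through $\R{1}$ — namely $\IL^- \vdash \Box(E \to \bigvee X \lor F) \to (D \rhd E \to D \rhd (\bigvee X \lor F))$ — we would get $D \rhd (\bigvee X \lor F) \in \Gamma$; but $\J{3}$ gives $\IL^- \vdash \bigwedge \Gamma \to \bigvee X \rhd F$, and $F \rhd F$... here one needs $\J{1}$, which $L$ need not contain, so the bare-$\IL^-$ argument stalls. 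I expect the actual route is: $Y_0$ is $L$-consistent because otherwise, picking $\Delta$-witnesses carefully, the assumption $D \rhd E \in \Gamma$ together with $\Gamma \prec_F \Delta$ and $D \in \Delta$ already forces a witness for $E$ outside the ``$F$-forbidden'' zone, and the inconsistency of $Y_0$ would contradict this. Concretely, the key inequality to verify is that $L \nvdash \bigwedge \Gamma \to \Box(E \to \bigvee X \lor F)$ follows from $D \rhd E \in \Gamma$ and $\Box(D \to \dots) \notin \Gamma$ (the latter coming from $\Gamma \prec_F \Delta$, $D \in \Delta$, and the defining property of $\prec$, since $\Delta$ contains a box not in $\Gamma$ and all of $\Gamma$'s boxes). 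Once $Y_0$ is shown $L$-consistent, extend it to $\Theta \in K_L$; then $E, {\sim}F \in \Theta$ as required.

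For the three ``moreover'' clauses I would enlarge the seed set in the style of Lemma \ref{PL3}. For (a), with $L \supseteq \IL^-(\J{4}_+)$, put
\[
	Y_a := Y_0 \cup \{\Box(E \to \bigvee X \lor F) : \text{a suitable such formula}\},
\]
more precisely add $\Box(E \to \bigvee X \lor F)$ itself; one checks $\Box(E \to \bigvee X \lor F) \notin \Gamma$ exactly because otherwise $\J{4}_+$ plus $\J{3}$ (and $\J{4}_+ \vdash \J{4}$ via Proposition \ref{4P3}, giving $\bigvee X \rhd F$ absorbed appropriately) would yield $D \rhd E \notin \Gamma$ or a contradiction; then $\Box(E \to \bigvee X \lor F) \in \Theta \setminus \Gamma$ witnesses $\Gamma \prec \Theta$. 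For (b), with $L \supseteq \IL^-(\J{2}_+)$, strengthen the added box to one that also forbids each $G \in X$, i.e. work with $\bigvee X_{1} := \bigvee X \lor F$ replaced by a larger disjunction and add $\{{\sim}G : G \in X\}$ is already in; the point of $\J{2}_+$ (which proves $\J{4}_+$ by Proposition \ref{2P3}) is to chain $D \rhd E$ and $E \rhd$(something) so that the resulting $\Theta$ still refutes each $G \rhd F \in \Gamma$, giving $\Gamma \prec_F \Theta$; concretely $\J{2}_+'$ in the form $(D \rhd E) \land ((E \land \neg F) \rhd F) \to D \rhd F$ is what rules out the bad case. For (c), with $L \supseteq \IL^-(\J{2}_+, \J{5})$, add moreover $\Box {\sim}F$ and $\{\Box {\sim}G : G \in X\}$ to the seed, exactly as the passage from $Y_1$ to $Y_2$ in Lemma \ref{PL3}: $\J{5}$ supplies $\Diamond G \rhd G$ and $\Diamond F \rhd F$, which combined with $\J{2}_+$ let one absorb the diamonds into the relevant interpretability statements, so that the enlarged box still fails to lie in $\Gamma$; the resulting $\Theta$ then satisfies $\Gamma \prec_F^* \Theta$ and $\Box {\sim}F \in \Theta$.

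In each case the routine part is the $L$-consistency check for the seed set, which follows the by-now-standard pattern: assume inconsistency, extract finitely many $\Box B_i \in \Gamma$, push the implication under a $\Box$ using necessitation and $\G{2}$, use $\Box B_i \to \Box\Box B_i$ (from $\G{3}$) and membership in $\Gamma$ to discharge the hypotheses, and arrive at a $\Box(\dots) \in \Gamma$ contradicting the fact — established first — that this box is \emph{not} in $\Gamma$. The genuinely delicate step, and the one I would spend the most care on, is precisely establishing that the relevant box $\Box(D \to \bigvee X \lor F)$ (or its variants with the extra diamonds/boxes) is not a member of $\Gamma$: this is where the hypotheses $D \rhd E \in \Gamma$, $\Gamma \prec_F \Delta$, $D \in \Delta$ genuinely interact, and where the extra axioms $\J{4}_+$, $\J{2}_+$, $\J{5}$ are invoked to handle the successively stronger conclusions. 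I expect the bookkeeping of which disjuncts and which boxed formulas to include — so that $\Theta$ simultaneously contains $E$, excludes $F$, refutes all of $X$, and (in cases a–c) sits in the right accessibility relation to $\Gamma$ — to be the main obstacle, rather than any single clever trick.
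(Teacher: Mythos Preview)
There is a genuine gap in your base case, and a related one in case (a).

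For the base case your seed set $Y_0 = \{B, \Box B : \Box B \in \Gamma\} \cup \{E, {\sim}F\} \cup \{{\sim}G : G \in X\}$ is too ambitious and can simply be inconsistent. Take the situation $E \rhd F \in \Gamma$ (nothing in the hypotheses excludes this, since $L$ need not contain $\J{2}$). Then $E \in X$, so $Y_0$ contains both $E$ and ${\sim}E$. Your own diagnosis that ``the bare-$\IL^-$ argument stalls'' is correct, but the fix is not a cleverer consistency argument for $Y_0$; it is to abandon $Y_0$. The base conclusion asks only for $E, {\sim}F \in \Theta$, with no relation between $\Gamma$ and $\Theta$ at all, so the paper just checks that $\{E, {\sim}F\}$ is $L$-consistent: otherwise $L \vdash E \to F$, whence by the \emph{rule} $\R{1}$ (not the axiom $\J{4}_+$) $L \vdash D \rhd E \to D \rhd F$, so $D \rhd F \in \Gamma$, and then $\Gamma \prec_F \Delta$ forces ${\sim}D \in \Delta$, contradicting $D \in \Delta$.

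The same over-reach bites in case (a). You propose to add $\Box(E \to \bigvee X \lor F)$ and keep $\{{\sim}G : G \in X\}$ in the seed. But to show $\Box(E \to \bigvee X \lor F) \notin \Gamma$ you would need, from $D \rhd (\bigvee X \lor F)$ and $\bigvee X \rhd F$, to conclude $D \rhd F$; that is $\J{2}_+$, not $\J{4}_+$. The paper instead drops $X$ entirely for (a), since the target is only $\Gamma \prec \Theta$, not $\Gamma \prec_F \Theta$: it takes $Y_1 = \{B, \Box B : \Box B \in \Gamma\} \cup \{E, {\sim}F, \Box(E \to F)\}$ and shows $\Box(E \to F) \notin \Gamma$ directly from $\J{4}_+$ (if it were in $\Gamma$, then $D \rhd F \in \Gamma$, same contradiction as above). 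The new box then witnesses $\Gamma \prec \Theta$.

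Your sketches for (b) and (c) are on the right track and match the paper: for (b) one uses $\Box(E \to \bigvee X \lor F)$ and $\J{2}_+'$ exactly as you indicate; for (c) one further enlarges to $X_1 = X \cup \{\Diamond G : G \in X\} \cup \{\Diamond F\}$ and adds $\Box {\sim}F$ and the $\Box {\sim}G$'s, using $\J{5}$ and $\J{2}$ to absorb the diamonds. The moral is that the seed set should be calibrated to exactly what the conclusion demands at each stage, rather than front-loaded.
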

\begin{proof}
Suppose $D \rhd E \in \Gamma$, $\Gamma \prec_F \Delta$ and $D \in \Delta$. 
\begin{itemize}
	\item Suppose, towards a contradiction, that the set $\{E, {\sim}F\}$ is $L$-inconsistent. 
	Then $L \vdash E \to F$. 
	By the rule $\mathbf{R1}$, we have $L \vdash D \rhd E \to D \rhd F$, and hence $D \rhd F \in \Gamma$. 
	Since $\Gamma \prec_F \Delta$, we have ${\sim}D \in \Delta$. 
	This contradicts the $L$-consistency of $\Delta$. 
	Therefore $\{E, {\sim}F\}$ is $L$-consistent. 
	Let $\Theta \in K_L$ be such that $\{E, {\sim}F\} \subseteq \Theta$, and then $\Theta$ satisfies the required conditions. 
	
	\item a. Assume that $L$ contains $\J{4}_+$. 
If $\Box (E \to F) \in \Gamma$, then by $\J{4}_+$, $L \vdash \bigwedge \Gamma \to (D \rhd E \to D \rhd F)$. 
Since $D \rhd E \in \Gamma$, we obtain $D \rhd F \in \Gamma$. 
Then ${\sim}D \in \Delta$ because $\Gamma \prec_F \Delta$, and this is a contradiction. 
Therefore  $\Box (E \to F) \notin \Gamma$. 

Suppose $Y_1 : = \{B, \Box B : \Box B \in \Gamma\} \cup \{E, {\sim}F, \Box(E \to F)\}$ were $L$-inconsistent. 
Then there would be $\Box B_1, \ldots, \Box B_k \in \Gamma$ such that
\begin{align*}
	L & \vdash \bigwedge_{i=1}^k (B_i \land \Box B_i) \to (\Box (E \to F) \to (E \to F)),\\
	L & \vdash \bigwedge_{i=1}^k \Box B_i \to \Box (\Box (E \to F) \to (E \to F)),\\
	L & \vdash \bigwedge \Gamma \to \Box (E \to F). 
\end{align*}
Then $\Box (E \to F) \in \Gamma$, and this is a contradiction. 
Thus $Y_1$ is $L$-consistent. 
Let $\Theta \in K_L$ be such that $Y_1 \subseteq \Theta$. 
Then $\Box(E \to F) \in \Theta \setminus \Gamma$, and hence we conclude $\Gamma \prec \Theta$. 

	\item b. Assume that $L$ contains $\J{2}_+$. 
Let $X : = \{G : G \rhd F \in \Gamma\}$. 
If $\Box (E \to \bigvee X \lor F) \in \Gamma$, then $L \vdash \bigwedge \Gamma \to \Box (E \land \neg F \to \bigvee X)$, and hence $L \vdash \bigwedge \Gamma \to (\bigvee X \rhd F \to (E \land \neg F) \rhd F)$ by Proposition \ref{0P3}.2. 
Since $L \vdash \bigwedge \Gamma \to \bigvee X \rhd F$, we have $L \vdash \bigwedge \Gamma \to (E \land \neg F) \rhd F$. 
Since $D \rhd E \in \Gamma$, $L \vdash \bigwedge \Gamma \to D \rhd F$ by $\J{2}_+'$. 
Thus $D \rhd F \in \Gamma$. 
Then ${\sim}F \in \Delta$ because $\Gamma \prec_F \Delta$, and this is a contradiction. Hence $\Box (E \to \bigvee X \lor F) \notin \Gamma$. 

Let 
\[
	Y_2 : = \{B, \Box B : \Box B \in \Gamma\} \cup \{E, {\sim}F, \Box(E \to \bigvee X \lor F)\} \cup \{{\sim}G : G \in X\}, 
\]
then $Y_2$ is $L$-consistent. 
Let $\Theta \in K_L$ be such that $Y_2 \subseteq \Theta$. 
Then $\Box(E \to \bigvee X \lor F) \in \Theta \setminus \Gamma$, and hence $\Gamma \prec \Theta$. 
Moreover, $\Gamma \prec_F \Theta$. 

	\item c. Assume that $L$ contains $\J{2}_+$ and $\J{5}$. 
Let $X : = \{G : G \rhd F \in \Gamma\}$ and $X_1 : = X \cup \{\Diamond G : G \in X\} \cup \{\Diamond F\}$. 
Then $\Box (E \to \bigvee X_1 \lor F) \in \Phi$. 
For each $G \in X$, $L \vdash \bigwedge \Gamma \to (\Diamond G \rhd G) \land (G \rhd F)$ by $\J{5}$. 
Then $L \vdash \bigwedge \Gamma \to \Diamond G \rhd F$ by $\J{2}$. 
Since $L \vdash \bigwedge \Gamma \to \bigvee X \rhd F$ and $L \vdash \Diamond F \rhd F$, we obtain $L \vdash \bigwedge \Gamma \to \bigvee X_1 \rhd F$. 

Suppose, towards a contradiction, $\Box (E \to \bigvee X_1 \lor F) \in \Gamma$. 
Then $L \vdash \bigwedge \Gamma \to \Box (E \land \neg F \to \bigvee X_1)$, and thus $L \vdash \bigwedge \Gamma \to (\bigvee X_1 \rhd F \to (E \land \neg F) \rhd F)$ by Proposition \ref{0P3}.2. 
Hence $L \vdash \bigwedge \Gamma \to (E \land \neg F) \rhd F$. 
Since $D \rhd E \in \Gamma$, by $\J{2}_+'$, we have $L \vdash \bigwedge \Gamma \to D \rhd F$. 
Thus $D \rhd F \in \Gamma$. 
Then ${\sim}F \in \Delta$ because $\Gamma \prec_F \Delta$. 
This is a contradiction. 
Hence we obtain $\Box (E \to \bigvee X_1 \lor F) \notin \Gamma$. 

Let 
\[
	Y_3 : = \{B, \Box B : \Box B \in \Gamma\} \cup \{E, {\sim}F, \Box(E \to \bigvee X_1 \lor F)\} \cup \{{\sim}G, \Box {\sim}G : G \in X\} \cup \{\Box {\sim}F\}, 
\]
then we can prove that $Y_3$ is $L$-consistent. 
Let $\Theta \in K_L$ be such that $Y_3 \subseteq \Theta$. 
Then $\Box(E \to \bigvee X_1 \lor F) \in \Theta \setminus \Gamma$, and hence $\Gamma \prec \Theta$. 
Moreover, $\Gamma \prec_F^* \Theta$. 
\end{itemize}
\end{proof}

\begin{lem}\label{PL5}
Assume that $L$ contains $\J{4}$. 
Let $\Gamma, \Delta \in K_L$ and $D, E \in \Phi_\rhd$. 
If $D \rhd E \in \Gamma$, $\Gamma \prec \Delta$ and $D \in \Delta$, then there exists $\Theta \in K_L$ such that $\Gamma \prec \Theta$ and $E \in \Theta$. 
\end{lem}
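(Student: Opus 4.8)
The plan is to reduce the statement to Lemma \ref{PL3}, exploiting the fact that $L$ proves $\J{4}'$ by Proposition \ref{4P1}. Concretely, I would first establish the auxiliary claim that $E \rhd \bot \notin \Gamma$, and then feed this non-membership into Lemma \ref{PL3}.

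For the claim, assume towards a contradiction that $E \rhd \bot \in \Gamma$. Since $D \rhd E \in \Gamma$ by hypothesis and $L \vdash (D \rhd E) \to (E \rhd \bot \to D \rhd \bot)$ by $\J{4}'$, we get $L \vdash \bigwedge \Gamma \to D \rhd \bot$; as $D, \bot \in \Phi_\rhd$ (the latter by condition 2 of adequacy), the formula $D \rhd \bot$ lies in $\Phi$, hence $D \rhd \bot \in \Gamma$ by maximal $L$-consistency of $\Gamma$. Then $\Box {\sim}D \in \Gamma$ by $\J{6}$ (exactly as in the proof of Lemma \ref{PL1}; note $\Box {\sim}D \in \Phi$ by adequacy). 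From $\Gamma \prec \Delta$ we obtain ${\sim}D \in \Delta$, which together with $D \in \Delta$ contradicts the $L$-consistency of $\Delta$. This proves $E \rhd \bot \notin \Gamma$.

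Now apply Lemma \ref{PL3} with its two $\Phi_\rhd$-parameters taken to be $E$ and $\bot$: since $E \rhd \bot \notin \Gamma$, there is $\Theta \in K_L$ with $E \in \Theta$ and $\Gamma \prec_\bot \Theta$. Because $\Gamma \prec_\bot \Theta$ entails $\Gamma \prec \Theta$ by definition, such a $\Theta$ witnesses the lemma.

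I do not anticipate a genuine obstacle here. The only point requiring a bit of care is the bookkeeping with $\sim$ and the adequacy conditions, which is needed to guarantee that $D \rhd \bot$, $\Box {\sim}D$ and $E \rhd \bot$ all belong to $\Phi$, so that maximal $L$-consistency of $\Gamma$ can be used to pass from provability to membership; the substantive content is already contained in Lemma \ref{PL3} and Proposition \ref{4P1}.
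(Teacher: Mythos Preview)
Your proof is correct and follows essentially the same strategy as the paper's: both arguments use $\J{4}$ (equivalently $\J{4}'$) together with $D \in \Delta$ to conclude that $\Box {\sim}E \notin \Gamma$ (equivalently $E \rhd \bot \notin \Gamma$), and then extract a suitable $\Theta$. The only difference is cosmetic: you invoke Lemma~\ref{PL3} as a black box, whereas the paper redoes the (slightly simpler) construction by hand, setting $Y = \{B, \Box B : \Box B \in \Gamma\} \cup \{E, \Box {\sim}E\}$ and using $\Box {\sim}E \in \Theta \setminus \Gamma$ as the witness for $\Gamma \prec \Theta$.
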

\begin{proof}
Since $D \rhd E \in \Gamma$, $L \vdash \bigwedge \Gamma \to (\Diamond D \to \Diamond E)$ by $\J{4}$. 
If $\Box {\sim}E \in \Gamma$, then $\Box {\sim}D \in \Gamma$. 
Since $\Gamma \prec \Delta$, we have ${\sim}D \in \Delta$, a contradiction. 
Thus $\Box {\sim}E \notin \Gamma$. 

Let $Y : = \{B, \Box B : \Box B \in \Gamma\} \cup \{E, \Box {\sim}E\}$, then it is proved that $Y$ is $L$-consistent. 
Thus for some $\Theta \in K_L$, $Y \subseteq \Theta$. 
Since $\Box {\sim}E \in \Theta \setminus \Gamma$, we obtain $\Gamma \prec \Theta$. 
\end{proof}

\begin{lem}\label{PL6}
Assume that $L$ contains $\J{2}$. 
Let $\Gamma, \Delta \in K_L$ and $D, E, F \in \Phi_\rhd$. 
If $D \rhd E \in \Gamma$, $\Gamma \prec_F \Delta$ and $D \in \Delta$, then there exists $\Theta \in K_L$ such that $\Gamma \prec_F \Theta$ and $E \in \Theta$. 
Moreover:
\renewcommand{\theenumi}{\alph{enumi}}
\begin{enumerate}
	\item If $L$ contains $\J{5}$, then we can find $\Theta$ such that in addition  $\Gamma \prec_F^* \Theta$ and $\Box {\sim}F \in \Theta$ hold. 
\end{enumerate}
\renewcommand{\theenumi}{\arabic{enumi}}
\end{lem}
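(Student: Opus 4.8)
The plan is to follow the pattern of Lemmas \ref{PL3} and \ref{PL4}: I build $\Theta$ as a $\Phi$-maximally $L$-consistent extension of a suitable finite set $Y \subseteq \Phi$ assembled from $\Gamma$, the formula $E$, and a single progress witness of the form $\Box(E \to \bigvee Z) \notin \Gamma$. Throughout, write $X := \{G : G \rhd F \in \Gamma\}$; by $\J{3}$ we have $L \vdash \bigwedge \Gamma \to \bigvee X \rhd F$, and this is essentially the only place where $\J{2}$ (and, for clause (a), $\J{5}$) will be used.

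For the first assertion, take
\[
  Y := \{B, \Box B : \Box B \in \Gamma\} \cup \{E,\ \Box(E \to \bigvee X)\} \cup \{{\sim}G : G \in X\},
\]
which lies in $\Phi$ by the adequacy clauses. First I check $\Box(E \to \bigvee X) \notin \Gamma$: if it belonged to $\Gamma$, then Proposition \ref{0P3}.2 would give $L \vdash \bigwedge \Gamma \to (\bigvee X \rhd F \to E \rhd F)$, hence $E \rhd F \in \Gamma$, and then $\J{2}$ together with $D \rhd E \in \Gamma$ would give $D \rhd F \in \Gamma$; since $\Gamma \prec_F \Delta$ this forces ${\sim}D \in \Delta$, contradicting $D \in \Delta$. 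Next, $Y$ is $L$-consistent by the Löb argument used for $Y_0$ in Lemma \ref{PL3}: the unboxed part $E \land \bigwedge_{G \in X}{\sim}G$ of $Y$ is provably equivalent to $\neg(E \to \bigvee X)$, so were $Y$ $L$-inconsistent we would get $L \vdash \bigwedge_{i=1}^{k}(B_i \land \Box B_i) \to (\Box(E \to \bigvee X) \to (E \to \bigvee X))$ for some $\Box B_1, \ldots, \Box B_k \in \Gamma$, and then necessitation, the $\GL$-theorem $\Box A \to \Box \Box A$, and $\G{3}$ would put $\Box(E \to \bigvee X)$ into $\Gamma$, a contradiction. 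Any $\Theta \in K_L$ with $Y \subseteq \Theta$ then works: $E \in \Theta$; $\Box(E \to \bigvee X) \in \Theta \setminus \Gamma$ gives $\Gamma \prec \Theta$; and ${\sim}G \in \Theta$ for every $G \in X$ gives $\Gamma \prec_F \Theta$.

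For clause (a), where $L$ also contains $\J{5}$, I enlarge the disjunction and add the negations of the new disjuncts: set $X_1 := X \cup \{\Diamond G : G \in X\} \cup \{\Diamond F\}$ and
\[
  Y_1 := \{B, \Box B : \Box B \in \Gamma\} \cup \{E,\ \Box(E \to \bigvee X_1)\} \cup \{{\sim}G, \Box{\sim}G : G \in X\} \cup \{\Box{\sim}F\}.
\]
By $\J{5}$ we have $L \vdash \Diamond G \rhd G$ and $L \vdash \Diamond F \rhd F$, so $\J{2}$ yields $L \vdash \bigwedge \Gamma \to \Diamond G \rhd F$ for each $G \in X$, and hence $L \vdash \bigwedge \Gamma \to \bigvee X_1 \rhd F$ by $\J{3}$; the argument of the previous paragraph, now run through $\bigvee X_1 \rhd F$, again gives $\Box(E \to \bigvee X_1) \notin \Gamma$. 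Since $\Box{\sim}H \leftrightarrow \neg \Diamond H$, the unboxed part $E \land \bigwedge_{G \in X}({\sim}G \land \Box{\sim}G) \land \Box{\sim}F$ of $Y_1$ is provably equivalent to $\neg(E \to \bigvee X_1)$, so the same Löb argument shows $Y_1$ is $L$-consistent. Any $\Theta \in K_L$ with $Y_1 \subseteq \Theta$ then satisfies $E \in \Theta$, $\Gamma \prec \Theta$ (witnessed by $\Box(E \to \bigvee X_1)$), $\Box{\sim}F \in \Theta$, and $\Gamma \prec_F^* \Theta$, the last because $B \rhd F \in \Gamma$ implies $B \in X$ and hence ${\sim}B, \Box{\sim}B \in \Theta$.

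The only genuinely delicate point is the bookkeeping in the consistency step: one has to recognize that the unboxed conjuncts of $Y$ (respectively $Y_1$) are collectively the negation of the implication that occurs boxed in the same set, so that $\G{3}$ applies after necessitation, and one has to use transitivity in $\GL$ to move the hypotheses $\Box B_i$ and the boxed implication under an outer $\Box$. That every auxiliary formula — $\Box(E \to \bigvee X_1)$, $\Box{\sim}G$, $\Box{\sim}F$ — lies in $\Phi$ is immediate from the definition of an adequate set.
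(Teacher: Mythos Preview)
Your argument is correct and follows essentially the same route as the paper's own proof: the same sets $X$, $X_1$, $Y$, $Y_1$ are used, the key step $\Box(E \to \bigvee X) \notin \Gamma$ (respectively $\Box(E \to \bigvee X_1) \notin \Gamma$) is derived in the same way via Proposition~\ref{0P3}.2 and $\J{2}$, and the consistency of $Y$ and $Y_1$ is obtained by the same L\"ob/$\G{3}$ reduction. Your explicit identification of the ``unboxed part'' of $Y_1$ with $\neg(E \to \bigvee X_1)$ is a helpful expansion of what the paper leaves implicit.
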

\begin{proof}
Let $X : = \{G : G \rhd F \in \Gamma\}$. 
Suppose, towards a contradiction, that $\Box (E \to \bigvee X) \in \Gamma$. 
Then by Proposition \ref{0P3}.2, $\IL^- \vdash \bigwedge \Gamma \to (\bigvee X \rhd F \to E \rhd F)$. 
Since $\IL^- \vdash \bigwedge \Gamma \to \bigvee X \rhd F$, $\IL^- \vdash \bigwedge \Gamma \to E \rhd F$. 
Also since $D \rhd E \in \Gamma$, we obtain $L \vdash \bigwedge \Gamma \to D \rhd F$ by $\J{2}$. 
Thus $D \rhd F \in \Gamma$. 
Since $\Gamma \prec_F \Delta$, ${\sim}D \in \Delta$ and hence this contradicts the $L$-consistency of $\Delta$. 
Therefore $\Box (E \to \bigvee X) \notin \Gamma$. 

Let $Y_0 : = \{B, \Box B : \Box B \in \Gamma\} \cup \{E, \Box(E \to \bigvee X)\} \cup \{{\sim}G : G \in X\}$, then $Y_0$ is $L$-consistent. 
Let $\Theta \in K_L$ be such that $Y_0 \subseteq \Theta$. 
Then $\Theta$ is a desired set. 

a. Assume that $L$ contains $\J{5}$. 
Let $X_1 : = X \cup \{\Diamond G : G \in X\} \cup \{\Diamond F\}$. 
If $\Box (E \to \bigvee X_1) \in \Gamma$, then $E \rhd F \in \Gamma $ as in the proof of Lemma \ref{PL3}.b. 
Since $D \rhd E \in \Gamma$, we have $D \rhd F \in \Gamma$ by applying $\J{2}$.  
This contradicts $\Gamma \prec_F \Theta$ and $E \in \Theta$. 
Therefore $\Box (E \to \bigvee X_1)$ is not in $\Gamma$. 
Let $Y_1 : = \{B, \Box B : \Box B \in \Gamma\} \cup \{E, \Box(E \to \bigvee X_1)\} \cup \{{\sim}G, \Box {\sim}G : G \in X\} \cup \{\Box {\sim}F\}$. 
Then $Y_1$ is $L$-consistent. 
Let $\Theta \in K_L$ be such that $Y_1 \subseteq \Theta$. 
\end{proof}

\begin{table}[ht]
\centering
\begin{tabular}{|l||c|c|c|c|c||l|}
\hline
 & $E \in \Theta$ & ${\sim}F \in \Theta$ & $\Gamma \prec \Theta$ & $\Gamma \prec_F \Theta$ & \begin{tabular}{c} $\Gamma \prec_F^* \Theta$ \\ \& $\Box {\sim}F \in \Theta$ \end{tabular} & \\
\hline
 & $\checkmark$ & $\checkmark$ & & & & Lemma \ref{PL4} \\
\hline
$\J{4}$ & $\checkmark$ & & $\checkmark$ & & & Lemma \ref{PL5} \\
\hline
$\J{4}_+$ & $\checkmark$ & $\checkmark$ & $\checkmark$ & & & Lemma \ref{PL4}.a\\
\hline
$\J{2}$ & $\checkmark$ & & & $\checkmark$ & & Lemma \ref{PL6}\\
\hline
$\J{2}$, $\J{5}$ & $\checkmark$ & & & & $\checkmark$ & Lemma \ref{PL6}.a\\
\hline
$\J{2}_+$ & $\checkmark$ & $\checkmark$ & & $\checkmark$ & & Lemma \ref{PL4}.b\\
\hline
$\J{2}_+$, $\J{5}$ & $\checkmark$ & $\checkmark$ & & & $\checkmark$ & Lemma \ref{PL4}.c \\
\hline
\end{tabular}
\caption{Conclusions of Lemmas \ref{PL4}, \ref{PL5} and \ref{PL6}}\label{Tab1}
\end{table}

Lemmas \ref{PL4} and \ref{PL6} state that if $D \rhd E$, $\Gamma \prec_F \Delta$ and $D \in \Delta$, then there exists $\Theta \in K_L$ having several properties depending on each logic $L$. 
The statement of Lemma \ref{PL5} is similar except that $\Gamma \prec \Delta$ is assumed instead of $\Gamma \prec_F \Delta$. 
To compare the properties that $\Theta$ is assured to have, we summarize conclusions of these lemmas in Table \ref{Tab1}. 
For example, the fifth line of the table shows that if $L$ contains $\J{2}$, then such a set $\Theta$ satisfying $E \in  \Theta$ and $\Gamma \prec_F \Theta$ is obtained by Lemma \ref{PL6}.
Note that the assumptions of each lemma are omitted in the table for the sake of simplicity.

\section{Modal completeness with respect to $\IL^-$-frames}\label{Sec:Compl}

In the previous sections, we dealt with the additional axioms $\J{1}$, $\J{2}$, $\J{2}_{+}$, $\J{4}$, $\J{4}_{+}$ and $\J{5}$. 
From Corollary \ref{4C1} and Propositions \ref{4P3}, \ref{2P2} and \ref{2P3}, we know that there are twenty different logics obtained by adding some of these axioms to $\IL^-$. 
In this section, we prove modal completeness theorems with respect to $\IL^-$-frames for twelve of them. 
Figure \ref{Fig1} represents the interrelations between these twelve logics. 
In the figure, each line segment shows that the logic on the right side is a proper extension of the logic on the left side, where properness comes from our investigations in Section  \ref{Sec:Ext}. 
It follows that no more line segments can be drawn in the figure. 
The remaining eight logics are investigated in Sections \ref{Sec:Incompl} and \ref{Sec:GCompl}. 

\begin{figure}[ht]
\centering
\begin{tikzpicture}
\node (IL-) at (0,1.5) {$\IL^-$};
\node (IL5) at (3,0) {$\IL^-(\J{5})$};
\node (IL1) at (3,1.5) {$\IL^-(\J{1})$};
\node (IL4) at (3,3){$\IL^-(\J{4}_+)$};
\node (IL15) at (6,0){$\IL^-(\J{1}, \J{5})$};
\node (IL45) at (6,1.5){$\IL^-(\J{4}_+, \J{5})$};
\node (IL14) at (6,3){$\IL^-(\J{1}, \J{4}_+)$};
\node (IL2) at (6,4.5){$\IL^-(\J{2}_+)$};
\node (IL145) at (9,1.5){$\IL^-(\J{1}, \J{4}_+, \J{5})$};
\node (IL25) at (9,3){$\IL^-(\J{2}_+, \J{5})$};
\node (CL) at (9,4.5){$\CL$};
\node (IL) at (12,3){$\IL$};
\draw [-] (IL5)--(IL-);
\draw [-] (IL1)--(IL-);
\draw [-] (IL4)--(IL-);
\draw [-] (IL15)--(IL5);
\draw [-] (IL45)--(IL5);
\draw [-] (IL15)--(IL1);
\draw [-] (IL14)--(IL1);
\draw [-] (IL45)--(IL4);
\draw [-] (IL14)--(IL4);
\draw [-] (IL2)--(IL4);
\draw [-] (IL145)--(IL15);
\draw [-] (IL145)--(IL45);
\draw [-] (IL25)--(IL45);
\draw [-] (IL145)--(IL14);
\draw [-] (CL)--(IL14);
\draw [-] (IL25)--(IL2);
\draw [-] (CL)--(IL2);
\draw [-] (IL)--(IL145);
\draw [-] (IL)--(IL25);
\draw [-] (IL)--(CL);
\end{tikzpicture}
\caption{Sublogics of $\IL$ complete with respect to $\IL^-$-frames}
\label{Fig1}
\end{figure}
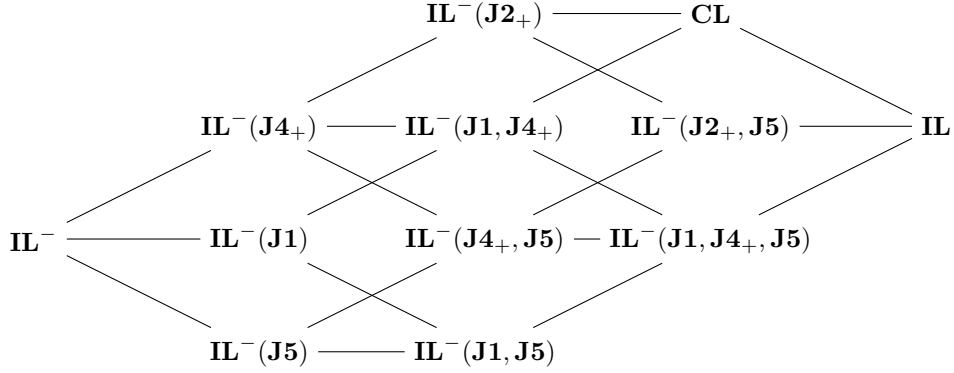

First, we prove the completeness theorem for logics in Figure \ref{Fig1} other than $\IL^-(\J{2}_+, \J{5})$ and $\IL$. 
Secondly, we prove the completeness theorem for logics $\IL^-(\J{2}_+, \J{5})$ and $\IL$. 
Our proof technique of the second completeness theorem is essentially same as in the proof of de Jongh and Veltman \cite{deJVel90}. 
However, the detail of our proof is different from that of proofs presented in \cite{deJVel90} and \cite{JapdeJ98}.
Furthermore, our proof of the first completeness theorem admits a simpler technique than that of the second theorem. 
More precisely, in the proof of the second theorem, the universe of a countermodel is defined as a set of tuples $\langle \Gamma, \tau \rangle$ where $\Gamma$ is a $\Phi$-maximal $L$-consistent subset of a finite adequate set $\Phi$ and $\tau$ is a finite sequence of formulas in $\Phi$. 
On the other hand, in our proof of the first theorem, we simply consider tuples $\langle \Gamma, B \rangle$ where $B$ is a formula in $\Phi$ to define a countermodel. 
As a consequence, our proof of the completeness theorem of the logic $\CL$ is simpler than Ignatiev's proof in \cite{Ign91}. 

First, we prove the completeness theorem for logics other than $\IL^-(\J{2}_+, \J{5})$ and $\IL$. 

\begin{thm}\label{CT1}
Let $L$ be one of the logics $\IL^-$, $\IL^-(\J{4}_+)$, $\IL^-(\J{1})$, $\IL^-(\J{5})$, $\IL^-(\J{2}_+)$, $\IL^-(\J{1}, \J{4}_+)$, $\IL^-(\J{4}_+, \J{5})$, $\IL^-(\J{1}, \J{5})$, $\CL$ and $\IL^-(\J{1}, \J{4}_+, \J{5})$. 
Then for any formula $A$, the following are equivalent: 
\begin{enumerate}
	\item $L \vdash A$. 
	\item $A$ is valid in all (finite) $\IL^-$-frames where all axioms of $L$ are valid. 
\end{enumerate}
\end{thm}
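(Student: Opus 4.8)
The plan is to establish both directions of the equivalence. The direction $(1 \Rightarrow 2)$ is soundness: by Proposition \ref{0P1} every theorem of $\IL^-$ is valid in all $\IL^-$-frames, and the frame conditions corresponding to each of the axioms $\J{1}$, $\J{4}_+$, $\J{5}$ (and $\J{2}_+$) were shown in Section \ref{Sec:Ext} (Propositions \ref{1P2}, \ref{4P4}, \ref{5P1}, \ref{2P4}) to be preserved exactly by the relevant frame classes; so if $L \vdash A$, then $A$ is valid in every $\IL^-$-frame where all axioms of $L$ hold. The substantial direction is $(2 \Rightarrow 1)$, which I would prove by contraposition: assuming $L \nvdash A$, I will build a finite $\IL^-$-model, based at some world, in which all axioms of $L$ are valid and $A$ fails.

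For the countermodel construction I would fix a finite adequate set $\Phi$ containing $A$ (Proposition \ref{PP1}), and take for the set of worlds the pairs $\langle \Gamma, B\rangle$ where $\Gamma \in K_L$ is a $\Phi$-maximally $L$-consistent set and $B \in \Phi$ is a ``tracking'' formula recording which critical cone the world sits in — with a designated root world carrying a trivial tracking component. I would define $\langle \Gamma, B\rangle\, R\, \langle \Delta, C\rangle$ essentially by $\Gamma \prec \Delta$ (with the appropriate bookkeeping on the second coordinate), and define $\langle \Delta_1, C_1 \rangle\, S_{\langle \Gamma, B\rangle}\, \langle \Delta_2, C_2\rangle$ using the relations $\prec_C$ / $\prec_C^*$ from Section \ref{Sec:Lem}. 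The forcing relation is $x \Vdash p \iff p \in \Gamma$ for the underlying $\Gamma$ of $x$. The key is a Truth Lemma: for every $D \in \Phi$ and every world $x$ with underlying set $\Gamma$, $x \Vdash D \iff D \in \Gamma$. The Boolean and $\Box$ cases are routine (using the conversely-well-founded $\prec$); the $\rhd$ case splits into: if $D \rhd E \notin \Gamma$, use Lemma \ref{PL3} to produce a witnessing $R$-successor forcing $D$ with no $S_x$-successor forcing $E$; and if $D \rhd E \in \Gamma$, given any $R$-successor forcing $D$, use Lemma \ref{PL4} (for $\IL^-$, $\IL^-(\J{4}_+)$, $\IL^-(\J{2}_+)$ and their $\J{1}$-variants) or Lemma \ref{PL5} (when $\J{4}$ but not $\J{4}_+$ is present — but here $\J{4}$ always comes via $\J{4}_+$) to produce an $S_x$-successor forcing $E$. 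Crucially, for the logics in this list the bookkeeping can be done with a single formula $B$ in the second coordinate rather than a sequence, because none of the ten logics contains $\J{2}$ together with $\J{5}$ in the strong form requiring the iterated-successor argument of de Jongh–Veltman; this is exactly the simplification announced before the theorem.

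After the Truth Lemma, I would verify that the constructed frame genuinely satisfies the frame conditions corresponding to each axiom of $L$: $\J{1}$ gives reflexivity $y S_x y$ for $x R y$, via Proposition \ref{1P2}, which must be built into the definition of $S_x$; $\J{4}_+$ gives $y S_x z \Rightarrow x R z$ (Proposition \ref{4P4}), which the $\prec_C$-based definition of $S_x$ must respect; $\J{5}$ gives $x R y\,\&\,y R z \Rightarrow y S_x z$ (Proposition \ref{5P1}), again designed into $S_x$; and $\J{2}_+$ gives transitivity of $S_x$ (Proposition \ref{2P4}), which follows from the transitivity-type properties of $\prec_C$ established in Lemmas \ref{PL2} and \ref{PL4}.b. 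Finally, since $A \in \Phi$ and $\{{\sim}A\}$ is $L$-consistent, pick $\Gamma \in K_L$ with ${\sim}A \in \Gamma$, observe the root can be taken to see a world with underlying set $\Gamma$ (or take $\Gamma$ itself paired with the trivial component), and the Truth Lemma yields a world falsifying $A$. As the model is finite, this also gives the finite model property stated in the theorem.

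I expect the main obstacle to be the simultaneous, uniform treatment of all ten logics: the definition of $S_x$ must be chosen once, using the $\prec_C$ / $\prec_C^*$ relations flexibly enough that, depending on which axioms lie in $L$, the needed frame condition ($\J{1}$-reflexivity, $\J{4}_+$, $\J{5}$, $\J{2}_+$-transitivity) is automatically satisfied while the $\rhd$-case of the Truth Lemma still goes through via the correct lemma from Table \ref{Tab1}. Getting the second-coordinate bookkeeping right — in particular ensuring that $S_x$-successors are honest $R$-successors when $\J{4}_+ \in L$, and that the ``$C$-critical'' constraint is correctly inherited along $R$ (Lemma \ref{PL2}) so that nested $\rhd$-formulas are handled — is the delicate part; everything else is a routine induction.
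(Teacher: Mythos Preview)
Your proposal is correct and follows essentially the same route as the paper: worlds are pairs $\langle \Gamma, B\rangle$ with $\Gamma \in K_L$ and $B \in \Phi_\rhd$, $R$ is given by $\prec$, $S_x$ is defined from the $\prec_C$ relations, and the Truth Lemma for $\rhd$ is handled via Lemmas \ref{PL3} and \ref{PL4} exactly as you describe. The only point worth flagging is that the paper does \emph{not} try to make a single uniform choice of $S_x$ covering all ten logics at once; instead it introduces a logic-dependent side condition $\mathcal{C}_L$ (five clauses, one per pair of logics), so your anticipated ``main obstacle'' is resolved by case-splitting rather than by a single clever definition.
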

\begin{proof}
$(1 \Rightarrow 2)$: Obvious. 

$(2 \Rightarrow 1)$: 
Suppose $L \nvdash A$. 
Let $\Phi$ be a finite adequate set of formulas with ${\sim}A \in \Phi$. 
The existence of such a set $\Phi$ is guaranteed by Proposition \ref{PP1}. 
By the supposition, there exists $\Gamma_0 \in K_L$ such that ${\sim}A \in \Gamma_0$. 

Let $M = \langle W, R, \{S_x\}_{x \in W}, \Vdash \rangle$ be a model satisfying the following clauses: 
\begin{enumerate}
	\item $W = \{\langle \Gamma, B \rangle : \Gamma \in K_L$ and $B \in \Phi_\rhd\}$; 
	\item $\langle \Gamma, B \rangle R \langle \Delta, C \rangle \iff \Gamma \prec \Delta$; 
	\item $\langle \Delta, C \rangle S_{\langle \Gamma, B \rangle} \langle \Theta, D \rangle \iff \langle \Gamma, B \rangle R \langle \Delta, C \rangle$ and the condition $\mathcal{C}_L$ which is defined below holds; 
	\item $\langle \Gamma, B \rangle \Vdash p \iff p \in \Gamma$. 
\end{enumerate}

The condition $\mathcal{C}_L$ depends on $L$ as follows:

\begin{itemize}
	\item $L \in \{\IL^-, \IL^-(\J{1})\}$: If $\Gamma \prec_C \Delta$, then ${\sim}C \in \Theta$. 
	\item $L \in \{\IL^-(\J{4}_+), \IL^-(\J{1}, \J{4}_+)\}$: $\langle \Gamma, B \rangle R \langle \Theta, D \rangle$ and if $\Gamma \prec_C \Delta$, then ${\sim}C \in \Theta$. 
	\item $L \in \{\IL^-(\J{2}_+), \CL\}$: $\langle \Gamma, B \rangle R \langle \Theta, D \rangle$ and if $\Gamma \prec_C \Delta$, then $D \equiv C$, $\Gamma \prec_C \Theta$ and ${\sim}C \in \Theta$. 
	\item $L \in \{\IL^-(\J{5}), \IL^-(\J{1}, \J{5})\}$: If $\Gamma \prec_C \Delta$ and $\Box {\sim}C \in \Delta$, then ${\sim}C \in \Theta$. 
	\item $L \in \{\IL^-(\J{4}_+, \J{5}), \IL^-(\J{1}, \J{4}_+, \J{5})\}$: $\langle \Gamma, B \rangle R \langle \Theta, D \rangle$ and if $\Gamma \prec_C \Delta$ and $\Box {\sim}C \in \Delta$, then ${\sim}C \in \Theta$. 
\end{itemize}

Here $D \equiv C$ means that formulas $D$ and $C$ are identical. 
Since $\bot \in \Phi_\rhd$, we have $\langle \Gamma_0, \bot \rangle \in W$ and therefore $W$ is non-empty. 
Also $W$ is finite and $R$ is a transitive and conversely well-founded binary relation on $W$. 
Thus $\langle W, R, \{S_x\}_{x \in W} \rangle$ is an $\IL^-$-frame.

\begin{lem}
Every axiom of $L$ is valid in the frame $F = \langle W, R, \{S_x\}_{x \in W} \rangle$ of $M$. 
\end{lem}
\begin{proof}
We distinguish the following several cases: 
\begin{itemize}
	\item $L = \IL^-(\J{1})$: Suppose $\langle \Gamma, B \rangle R \langle \Delta, C \rangle$. 
	If $\Gamma \prec_C \Delta$, then ${\sim}C \in \Delta$ because $C \rhd C \in \Gamma$. 
Thus $\langle \Delta, C \rangle S_{\langle \Gamma, B \rangle} \langle \Delta, C \rangle$ by the definition of $\mathcal{C}_L$. 
Therefore $\J{1}$ is valid in $F$ by Proposition \ref{1P2}. 

	\item $L = \IL^-(\J{4}_+)$: If $\langle \Delta, C \rangle S_{\langle \Gamma, B \rangle} \langle \Theta, D \rangle$, then $\langle \Gamma, B \rangle R \langle \Theta, D \rangle$. 
By Proposition \ref{4P4}, $\J{4}_+$ is valid in $F$. 

	\item $L = \IL^-(\J{2}_+)$: As in the case of $\IL^-(\J{4}_+)$, $\J{4}_+$ is valid in $F$. 
	Suppose $\langle \Delta_0, C_0 \rangle S_{\langle \Gamma, B \rangle} \langle \Delta_1, C_1 \rangle$ and $\langle \Delta_1, C_1 \rangle S_{\langle \Gamma, B \rangle} \langle \Delta_2, C_2 \rangle$. 
Then $\langle \Gamma, B \rangle R \langle \Delta_2, C_2 \rangle$. 
If $\Gamma \prec_{C_0} \Delta_0$, then $C_1 \equiv C_0$ and $\Gamma \prec_{C_0} \Delta_1$. 
Since $\langle \Delta_1, C_0 \rangle S_{\langle \Gamma, B \rangle} \langle \Delta_2, C_2 \rangle$ and $\Gamma \prec_{C_0} \Delta_1$, we have $C_2 \equiv C_0$, $\Gamma \prec_{C_0} \Delta_2$ and ${\sim}C_0 \in \Delta_2$. 
Thus we obtain $\langle \Delta_0, C_0 \rangle S_{\langle \Gamma, B \rangle} \langle \Delta_2, C_2 \rangle$. 
Therefore $\J{2}_+$ is valid in $F$ by Proposition \ref{2P4}. 

	\item $L = \IL^-(\J{5})$: Suppose $\langle \Gamma, B \rangle R \langle \Delta, C \rangle$ and $\langle \Delta, C \rangle R \langle \Theta, D \rangle$. 
If $\Gamma \prec_C \Delta$ and $\Box {\sim}C \in \Delta$, then ${\sim}C \in \Theta$ because $\Delta \prec \Theta$. 
Thus $\langle \Delta, C \rangle S_{\langle \Gamma, B \rangle} \langle \Theta, D \rangle$ holds. 
Then by Proposition \ref{5P1}, $\J{5}$ is valid in $F$. 

\item For other cases, the lemma is proved in a similar way as above. 
\end{itemize}
\end{proof}

\begin{lem}[Truth Lemma]
For any formula $C \in \Phi$ and any $\langle \Gamma, B \rangle \in W$, $C \in \Gamma$ if and only if $\langle \Gamma, B \rangle \Vdash C$. 
\end{lem}
\begin{proof}
We prove by induction on the construction of $C$. 
We only give a proof of the case $C \equiv (D \rhd E)$. 

$(\Rightarrow)$: 
Assume $D \rhd E \in \Gamma$. 
Let $\langle \Delta, F \rangle$ be any element of $W$ such that $\langle \Gamma, B \rangle R \langle \Delta, F \rangle$ and $\langle \Delta, F \rangle \Vdash D$. 
Then by induction hypothesis, $D \in \Delta$. 
We distinguish the following two cases. 

\begin{itemize}
	\item If $\Gamma \prec_F \Delta$, then by Lemma \ref{PL4}, there exists $\Theta \in K_L$ such that $E \in \Theta$ and ${\sim}F \in \Theta$. 
	Moreover, if $L \vdash \J{4}_+$, $\Gamma \prec \Theta$ holds. 
	Also if $L \vdash \J{2}_+$, $\Gamma \prec_F \Theta$ holds. 

	\item If $\Gamma \nprec_F \Delta$, by Lemma \ref{PL4}, there exists $\Theta \in K_L$ such that $E \in \Theta$ because $\Gamma \prec_\bot \Delta$. 
	Moreover, if $L \vdash \J{4}_+$, $\Gamma \prec \Theta$ holds. 
\end{itemize}

	In either case, we have $\langle \Theta, F \rangle \in W$. 
	Also $E \in \Theta$ and $\langle \Delta, F \rangle S_{\langle \Gamma, B \rangle} \langle \Theta, F \rangle$. 
	Then by induction hypothesis, $\langle \Theta, F \rangle \Vdash E$. 
	Therefore we conclude $\langle \Gamma, B \rangle \Vdash D \rhd E$.

$(\Leftarrow)$: 
Assume $D \rhd E \notin \Gamma$. 
	By Lemma \ref{PL3}, there exists $\Delta \in K_L$ such that $D \in \Delta$ and $\Gamma \prec_E \Delta$. 
	Moreover, if $L$ contains $\J{5}$, then $\Box {\sim}E \in \Delta$ also holds. 
	Since $\langle \Delta, E \rangle \in W$, $\langle \Delta, E \rangle \Vdash D$ by induction hypothesis. 
	Let $\langle \Theta, F \rangle$ be any element of $W$ with $\langle \Delta, E \rangle S_{\langle \Gamma, B \rangle} \langle \Theta, F \rangle$. 
	By the definitions of the relations $S_{\langle \Gamma, B \rangle}$ and the condition $\mathcal{C}_L$, we have ${\sim}E \in \Theta$ in all cases of $L$. 
	By induction hypothesis, $\langle \Theta, F \rangle \nVdash E$. 
	Therefore we obtain $\langle \Gamma, B \rangle \nVdash D \rhd E$. 
\end{proof}

Since $\langle \Gamma_0, \bot \rangle \in W$ and $A \notin \Gamma_0$, $\langle \Gamma_0, \bot \rangle \nVdash A$ by Truth Lemma. 
Therefore $A$ is not valid in the frame of $M$. 
\end{proof}

Our proof of Theorem \ref{CT1} cannot be applied to logics containing both $\J{2}$ and $\J{5}$. 
For example, for $L = \IL^-(\J{2}_+, \J{5})$, the condition $\mathcal{C}_L$ which is used to define the relations $S_{\langle \Gamma, B \rangle}$ might be as follows: 
$\langle \Gamma, B \rangle R \langle \Theta, D \rangle$ and if $\Gamma \prec_C \Delta$ and $\Box {\sim}C \in \Delta$, then $D \equiv C$, $\Gamma \prec_C \Theta$ and ${\sim}C \in \Theta$. 
Then $\J{5}$ is no longer valid in the resulting frame $\langle W, R, \{S_x\}_{x \in W} \rangle$. 
To avoid this obstacle, as mentioned above, for the modal completeness of such logics, we consider tuples $\langle \Gamma, \tau \rangle$ as members of the universe of our countermodel, where $\tau$ is a finite sequence of formulas. 
%This idea is originated in de Jongh and Veltman \cite{deJVel90}.  

For finite sequences $\tau$ and $\sigma$ of formulas, $\tau \subseteq \sigma$ denotes that $\tau$ is an initial segment of $\sigma$. 
Also $\tau \subsetneq \sigma$ denotes that $\tau$ is a proper initial segment of $\sigma$, that is, $\tau \subseteq \sigma$ and $|\tau| < |\sigma|$, where $|\tau|$ is the length of $\tau$. 
Let $\tau \ast \langle B \rangle$ be the sequence obtained from $\tau$ by concatenating $B$ as the last element. 

\begin{thm}\label{CT2}
Let $L$ be one of the logics $\IL^-(\J{2}_+, \J{5})$ and $\IL$. 
Then for any formula $A$, the following are equivalent: 
\begin{enumerate}
	\item $L \vdash A$. 
	\item $A$ is valid in all (finite) $\IL^-$-frames where all axioms of $L$ are valid. 
\end{enumerate}
\end{thm}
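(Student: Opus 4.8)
The plan is to follow the de Jongh--Veltman method, but, as the discussion following Theorem \ref{CT1} makes clear, the worlds of the countermodel can no longer be tuples $\langle\Gamma,B\rangle$; they must carry a whole ``critical history''. Suppose $L\nvdash A$, fix by Proposition \ref{PP1} a finite adequate set $\Phi$ with ${\sim}A\in\Phi$, and choose $\Gamma_0\in K_L$ with ${\sim}A\in\Gamma_0$. I would take as worlds the coherent pairs $\langle\Gamma,\tau\rangle$ with $\Gamma\in K_L$ and $\tau=\langle C_1,\dots,C_n\rangle$ a finite sequence of formulas from $\Phi_\rhd$, where coherence means that $\tau$ can be read off a chain of critical steps terminating at $\Gamma$ (each $C_{i+1}$ being a formula for which the relevant step is a starred $C_{i+1}$-critical successor, with $\Box{\sim}C_{i+1}$ true at the next point). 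The root is $\langle\Gamma_0,\langle\rangle\rangle$, and $\Vdash$ is fixed by $\langle\Gamma,\tau\rangle\Vdash p\iff p\in\Gamma$. The relation $R$ is defined so that $\langle\Gamma,\tau\rangle R\langle\Delta,\sigma\rangle$ forces $\tau\subseteq\sigma$, $\Gamma\prec\Delta$, and, when $\tau\subsetneq\sigma$ with $C$ the $(|\tau|+1)$-st entry of $\sigma$, also $\Gamma\prec_C^*\Delta$; and $S_{\langle\Gamma,\tau\rangle}$ is defined in the spirit of the conditions $\mathcal{C}_L$ of Theorem \ref{CT1}: for $R$-successors $\langle\Delta,\sigma\rangle$, $\langle\Theta,\rho\rangle$ of $\langle\Gamma,\tau\rangle$, we set $\langle\Delta,\sigma\rangle S_{\langle\Gamma,\tau\rangle}\langle\Theta,\rho\rangle$ when $\rho$ agrees with $\sigma$ at level $|\tau|+1$ (the critical formula $C$ ``active at $\langle\Gamma,\tau\rangle$'', taken to be $\bot$ when $\sigma=\tau$), ${\sim}C\in\Theta$, and $\Gamma\prec_C\Theta$ --- so that at the base level $C\equiv\bot$ the clause is vacuous and $S_{\langle\Gamma,\tau\rangle}$ then reaches every $R$-successor, which is what will yield $\J{5}$.

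The verification then has three parts. First, $F=\langle W,R,\{S_x\}_{x\in W}\rangle$ is a finite $\IL^-$-frame: finiteness holds once $|\tau|$ is bounded, and it is, because a critical extension by $C$ records $\Box{\sim}C\in\Delta$, which persists up every $R$-chain, so $C$ cannot head a fresh extension twice; transitivity of $R$ uses transitivity of $\prec$, of $\subseteq$, and of $\prec_C^*$ (Lemma \ref{PL2}); converse well-foundedness holds since along any $R$-chain either $\tau$ strictly grows (bounded) or $\Gamma$ strictly gains a boxed formula of $\Phi$ (bounded). Second, every axiom of $L$ is valid in $F$, read off the correspondences of Section \ref{Sec:Ext}: for $\IL^-(\J{2}_+,\J{5})$ it suffices to get $\J{2}_+$ and $\J{5}$ since $\J{4}$, $\J{4}_+$ follow (Propositions \ref{4P3}, \ref{2P3}), and for $\IL$ one adds $\J{1}$. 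Now $yS_xz\Rightarrow xRz$ is built in, so $\J{4}$ holds (Proposition \ref{4P4}); $S_x$ is transitive by transitivity of $\subseteq$ and of the critical relations, so $\J{2}_+$ holds (Proposition \ref{2P4}); $xRy\ \&\ yRz\Rightarrow yS_xz$ holds because $yRz$ only extends $y$'s sequence, which therefore still agrees with $x$'s at level $|\tau|+1$, while criticality is inherited via Lemma \ref{PL2}, so $\J{5}$ holds (Proposition \ref{5P1}); and for $\IL$, since $C\rhd C\in\Gamma$ every critical successor carries ${\sim}C$, giving $xRy\Rightarrow yS_xy$ and hence $\J{1}$ (Proposition \ref{1P2}).

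Third, and the heart of the matter, is the Truth Lemma $C\in\Gamma\iff\langle\Gamma,\tau\rangle\Vdash C$, by induction on $C$, the only nontrivial case being $C\equiv(D\rhd E)$. For $D\rhd E\in\Gamma$: given an $R$-successor $\langle\Delta,\sigma\rangle$ of $\langle\Gamma,\tau\rangle$ with $D\in\Delta$, let $F$ be the critical formula active at $\langle\Gamma,\tau\rangle$ (the $(|\tau|+1)$-st entry of $\sigma$, or $\bot$); Lemma \ref{PL4}.c --- applicable since both logics prove $\J{2}_+$ and $\J{5}$, and for $F\equiv\bot$ via Lemma \ref{PL1} --- returns $\Theta\in K_L$ with $E\in\Theta$, $\Gamma\prec_F^*\Theta$ and $\Box{\sim}F\in\Theta$, and these last two facts are exactly what let me hang $\Theta$ on the sequence tree as a coherent world $\langle\Theta,\rho\rangle$ with $\langle\Delta,\sigma\rangle S_{\langle\Gamma,\tau\rangle}\langle\Theta,\rho\rangle$; by the induction hypothesis $\langle\Theta,\rho\rangle\Vdash E$, so $\langle\Gamma,\tau\rangle\Vdash D\rhd E$. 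For $D\rhd E\notin\Gamma$: Lemma \ref{PL3}.b yields $\Delta$ with $D\in\Delta$, $\Gamma\prec_E^*\Delta$ and $\Box{\sim}E\in\Delta$, so $\langle\Delta,\tau\ast\langle E\rangle\rangle$ is a coherent $R$-successor of $\langle\Gamma,\tau\rangle$ satisfying $D$ by the induction hypothesis, and every $S_{\langle\Gamma,\tau\rangle}$-successor of it carries ${\sim}E$ by the definition of $S$, hence fails $E$; thus $\langle\Gamma,\tau\rangle\nVdash D\rhd E$. Finally $\langle\Gamma_0,\langle\rangle\rangle\nVdash A$, so $A$ is refuted on the finite $\IL^-$-frame $F$, which proves $(2\Rightarrow 1)$, the converse being routine soundness.

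I expect the crux to be precisely the point the paper isolates before the statement: arranging $R$, the coherence condition, and $S_x$ so that transitivity of $S_x$ (needed for $\J{2}_+$) and the universal edge property $xRy\ \&\ yRz\Rightarrow yS_xz$ (needed for $\J{5}$) coexist with the Truth Lemma, whereas the naive $\mathcal{C}_L$ for $\IL^-(\J{2}_+,\J{5})$ quoted there destroys $\J{5}$. The resolution is that the sequence carried by a world localizes ``the critical formula to worry about'' to the level of the particular point $x$ under consideration, instead of fixing it globally as in Theorem \ref{CT1}; since passing to an $R$-successor only lengthens the sequence and, through $\prec_C^*$ and Lemma \ref{PL2}, preserves criticality, the $S_x$-edges forced by $\J{5}$ never certify a spurious $\rhd$-formula, and the extra conjuncts $\Gamma\prec_F^*\Theta$ and $\Box{\sim}F\in\Theta$ delivered by Lemma \ref{PL4}.c are exactly what keep the witnessing worlds coherent. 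Making this bookkeeping close up --- in particular pinning down the coherence condition and the behaviour of $R$ and $S_x$ on proper sequence extensions --- is the delicate part; once it is in place, the rest reduces to the lemmas of Section \ref{Sec:Lem} and the frame correspondences of Section \ref{Sec:Ext}.
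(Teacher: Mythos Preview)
Your approach is essentially the de Jongh--Veltman method the paper also follows: worlds carry finite sequences of critical formulas, and the clash between $\J{2}_+$ and $\J{5}$ is resolved because an $R$-step only lengthens the sequence, so the critical formula at level $|\tau|+1$ is inherited along both $R$ and $S_x$. The paper packages the bookkeeping somewhat differently and more simply: it drops your coherence condition entirely (all sequences are admitted as worlds), defines $R$ as just $\Gamma\prec\Delta$ together with $\tau\subsetneq\sigma$ with no criticality clause, places the whole criticality test inside $S$ as a \emph{conditional} (``if $\tau\ast\langle C\rangle\subseteq\sigma$, $\Gamma\prec_C^*\Delta$ and $\Box{\sim}C\in\Delta$, then the analogous facts hold for $\Theta,\rho$''), and secures finiteness via the rank bound $\rank(\Gamma)+|\tau|\le\rank(\Gamma_0)$ rather than your non-repetition argument. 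Your variant buys a slightly more intuitive $S$ at the cost of a heavier $R$ and the extra coherence notion; the paper's buys uniformity and a one-line finiteness proof.

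One genuine glitch in your version as stated: you allow $\tau=\sigma$ in $R$, and then your appeal to Lemma~\ref{PL2} does not give transitivity. If $\langle\Gamma,\tau\rangle R\langle\Delta,\tau\rangle R\langle\Theta,\rho\rangle$ with $\tau\subsetneq\rho$ and $C$ the $(|\tau|+1)$-st entry of $\rho$, you have $\Delta\prec_C^*\Theta$ but need $\Gamma\prec_C^*\Theta$, and Lemma~\ref{PL2} runs in the wrong direction for that. The fix is to require $\tau\subsetneq\sigma$ in $R$, as the paper does; then the ``$C=\bot$ when $\sigma=\tau$'' fallback is never needed, and since every $\prec$-step is already a $\prec_\bot^*$-step by Lemma~\ref{PL1}, you can always extend the sequence by $\bot$ when no genuine critical formula is active.
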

\begin{proof}
$(1 \Rightarrow 2)$: Obvious. 

$(2 \Rightarrow 1)$: 
Suppose $L \nvdash A$. 
Let $\Phi$ be any finite adequate set with ${\sim}A \in \Phi$. 
Let $\Gamma_0 \in K_L$ be such that ${\sim}A \in \Gamma_0$. 

For each $\Gamma \in K_L$, we define the rank of $\Gamma$ (write $\rank(\Gamma)$) as follows: $\rank(\Gamma) : = \sup\{\rank(\Delta) + 1 : \Gamma \prec \Delta\}$, where $\sup \emptyset = 0$. 
This is well-defined because $\prec$ is conversely well-founded. 

Let $M = \langle W, R, \{S_x\}_{x \in W}, \Vdash \rangle$ be a model satisfying the following clauses: 
\begin{enumerate}
	\item $W = \{\langle \Gamma, \tau \rangle : \Gamma \in K_L$ and $\tau$ is a finite sequence of elements of $\Phi_\rhd$ with $\rank(\Gamma) + |\tau| \leq \rank(\Gamma_0)\}$; 
	\item $\langle \Gamma, \tau \rangle R \langle \Delta, \sigma \rangle \iff \Gamma \prec \Delta$ and $\tau \subsetneq \sigma$; 
	\item $\langle \Delta, \sigma \rangle S_{\langle \Gamma, \tau \rangle} \langle \Theta, \rho \rangle \iff \langle \Gamma, \tau \rangle R \langle \Delta, \sigma \rangle$, $\langle \Gamma, \tau \rangle R \langle \Theta, \rho \rangle$ and if $\tau \ast \langle C \rangle \subseteq \sigma$, $\Gamma \prec_C^* \Delta$ and $\Box {\sim}C \in \Delta$, then $\tau \ast \langle C \rangle \subseteq \rho$, $\Gamma \prec_C^* \Theta$ and ${\sim}C, \Box {\sim}C \in \Theta$; 
	\item $\langle \Gamma, \tau \rangle \Vdash p \iff p \in \Gamma$. 
\end{enumerate}

Let $\epsilon$ be the empty sequence. 
Then $\rank(\Gamma_0) + |\epsilon| = \rank(\Gamma_0)$, and hence $\langle \Gamma_0, \epsilon \rangle \in W$. 
Therefore $W$ is a non-empty set. 
Also $W$ is finite because of the condition $\rank(\Gamma) + |\tau| \leq \rank(\Gamma_0)$. 
Then $\langle W, R, \{S_x\}_{x \in W} \rangle$ is an $\IL^-$-frame.

\begin{lem}
Every axiom of $L$ is valid in the frame $F = \langle W, R, \{S_x\}_{x \in W} \rangle$ of $M$. 
\end{lem}
\begin{proof}
$\J{2}_+$: 
By the definition of $S$, $\J{4}$ is obviously valid in $F$. 
Assume $\langle \Delta_0, \sigma_0 \rangle S_{\langle \Gamma, \tau \rangle} \langle \Delta_1, \sigma_1 \rangle$ and $\langle \Delta_1, \sigma_1 \rangle S_{\langle \Gamma, \tau \rangle} \langle \Delta_2, \sigma_2 \rangle$. 
Suppose $\tau \ast \langle C \rangle \subseteq \sigma_0$, $\Gamma \prec_C^* \Delta_0$ and $\Box {\sim}C \in \Delta_0$. 
Then $\tau \ast \langle C \rangle \subseteq \sigma_1$, $\Gamma \prec_C^* \Delta_1$ and $\Box {\sim}C \in \Delta_1$ because $\langle \Delta_0, \sigma_0 \rangle S_{\langle \Gamma, \tau \rangle} \langle \Delta_1, \sigma_1 \rangle$. 
Then also $\tau \ast \langle C \rangle \subseteq \sigma_2$, $\Gamma \prec_C^* \Delta_2$ and ${\sim}C, \Box {\sim}C \in \Delta_2$ because $\langle \Delta_1, \sigma_1 \rangle S_{\langle \Gamma, \tau \rangle} \langle \Delta_2, \sigma_2 \rangle$.  
Thus we obtain $\langle \Delta_0, \sigma_0 \rangle S_{\langle \Gamma, \tau \rangle} \langle \Delta_2, \sigma_2 \rangle$. 
Therefore $\J{2}_+$ is valid in $F$ by Proposition \ref{2P6}. 

$\J{5}$: 
Assume that $\langle \Gamma, \tau \rangle R \langle \Delta, \sigma \rangle$ and $\langle \Delta, \sigma \rangle R \langle \Theta, \rho \rangle$. 
Suppose $\tau \ast \langle C \rangle \subseteq \sigma$, $\Gamma \prec_C^* \Delta$ and $\Box {\sim}C \in \Delta$. 
	Since $\sigma \subsetneq \rho$, we have $\tau \ast \langle C \rangle \subseteq \rho$. 
	Since $\Gamma \prec_C^* \Delta$ and $\Delta \prec \Theta$, $\Gamma \prec_C^* \Theta$ by Lemma \ref{PL2}. 
	Also we have ${\sim}C, \Box {\sim}C \in \Theta$ because $\Delta \prec \Theta$. 
	Therefore we obtain $\langle \Delta, \sigma \rangle S_{\langle \Gamma, \tau \rangle} \langle \Theta, \rho \rangle$. 
	By Proposition \ref{5P1}, $\J{5}$ is valid in $F$. 

At last, we assume $L = \IL$ and show that $\J{1}$ is valid in $F$. 
Suppose $\langle \Gamma, \tau \rangle R \langle \Delta, \sigma \rangle$, $\Gamma \prec_C^* \Delta$ and $\Box {\sim}C \in \Delta$. 
Since $C \rhd C \in \Gamma$, ${\sim}C \in \Delta$. 
Thus we have $\langle \Delta, \sigma \rangle S_{\langle \Gamma, \tau \rangle} \langle \Delta, \sigma \rangle$. 
By Proposition \ref{1P2}, $\J{1}$ is valid in $F$. 
\end{proof}

\begin{lem}[Truth Lemma]
For any formula $C \in \Phi$ and any $\langle \Gamma, \tau \rangle \in W$, $C \in \Gamma$ if and only if $\langle \Gamma, \tau \rangle \Vdash C$. 
\end{lem}
\begin{proof}
This is proved by induction on the construction of $C$, and we prove only for $C \equiv (D \rhd E)$. 

$(\Rightarrow)$: 
Assume $D \rhd E \in \Gamma$. 
Let $\langle \Delta, \sigma \rangle$ be any element of $W$ such that $\langle \Gamma, \tau \rangle R \langle \Delta, \sigma \rangle$ and $\langle \Delta, \sigma \rangle \Vdash D$. 
Then by induction hypothesis, $D \in \Delta$. 
We distinguish the following two cases. 

\begin{itemize}
	\item If $\tau \ast \langle F \rangle \subseteq \sigma$, $\Gamma \prec_F^* \Delta$ and $\Box {\sim}F \in \Delta$ for some $F$, then by Lemma \ref{PL4}, there exists $\Theta \in K_L$ such that $E \in \Theta$, $\Gamma \prec_F^* \Theta$ and ${\sim}F, \Box {\sim}F \in \Theta$. 
Let $\rho : = \tau \ast \langle F \rangle$. 

	\item If not, by Lemma \ref{PL4}, there exists $\Theta \in K_L$ such that $E \in \Theta$ and $\Gamma \prec \Theta$ because $\Gamma \prec_\bot \Delta$. 
Let $\rho : = \tau \ast \langle \bot \rangle$. 
\end{itemize}

	In either case, we have $\rank(\Theta) + 1 \leq \rank(\Gamma)$ and $|\rho| = |\tau| + 1$. Then we obtain 
\[
	\rank(\Theta) + |\rho| = \rank(\Theta) + 1 + |\tau| \leq \rank(\Gamma) + |\tau| \leq \rank(\Gamma_0). 
\]
	It follows $\langle \Theta, \rho \rangle \in W$. 
	By the definition of $S$, we have $\langle \Delta, \sigma \rangle S_{\langle \Gamma, \tau \rangle} \langle \Theta, \rho \rangle$. 
	Also by induction hypothesis, $\langle \Theta, \rho \rangle \Vdash E$. 
	Therefore we conclude $\langle \Gamma, \tau \rangle \Vdash D \rhd E$. 
	
$(\Leftarrow)$: 
Assume $D \rhd E \notin \Gamma$. 
	By Lemma \ref{PL3}, there exists $\Delta \in K_L$ such that $D \in \Delta$, $\Gamma \prec_E^* \Delta$ and $\Box {\sim}E \in \Delta$. 
	Let $\sigma : = \tau \ast \langle E \rangle$, then it is proved that $\langle \Delta, \sigma \rangle$ is an element of $W$ as above. 
	Then $\langle \Delta, \sigma \rangle \Vdash D$ by induction hypothesis. 

	Let $\langle \Theta, \rho \rangle$ be any element of $W$ with $\langle \Delta, \sigma \rangle S_{\langle \Gamma, \tau \rangle} \langle \Theta, \rho \rangle$. 
	Since $\tau \ast \langle E \rangle = \sigma$, $\Gamma \prec_E^* \Delta$ and $\Box {\sim}E \in \Delta$, we have ${\sim}E \in \Theta$ by the definition of $S$. 
	By induction hypothesis, $\langle \Theta, \rho \rangle \nVdash E$. 
	Therefore we conclude $\langle \Gamma, \tau \rangle \nVdash D \rhd E$. 
\end{proof}

Since $\langle \Gamma_0, \epsilon \rangle \in W$ and $A \notin \Gamma_0$, $\langle \Gamma_0, \epsilon \rangle \nVdash A$ by Truth Lemma. 
Therefore $A$ is not valid in the frame of $M$. 
\end{proof}

As a corollary to Theorems \ref{CT1} and \ref{CT2}, we have the decidability of these logics. 

\begin{cor}\label{CC1}
Every logic shown in Figure \ref{Fig1} is decidable. 
\end{cor}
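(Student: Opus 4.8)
The plan is to obtain decidability of each of the twelve logics from the finite model property already established in Theorems \ref{CT1} and \ref{CT2}, together with the fact that every logic $L$ appearing in Figure \ref{Fig1} is finitely axiomatized: it is $\IL^-$ augmented by finitely many of the schemata $\J{1}$, $\J{4}_+$, $\J{2}_+$, $\J{5}$, with the same effective inference rules as $\IL^-$. The key observation is that the proofs of Theorems \ref{CT1} and \ref{CT2} do not merely produce \emph{some} finite countermodel, but a countermodel whose size is bounded by a computable function of the refuted formula; this turns the finite model property into a genuine decision procedure.

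Concretely, given a formula $A$ with $L \nvdash A$, the completeness proofs fix a finite adequate set $\Phi$ with ${\sim}A \in \Phi$. By Proposition \ref{PP1} we may take $\Phi$ to be a canonically chosen finite adequate set generated from $\{A\}$, so that $|\Phi|$, and hence $|\Phi_\rhd|$ and $|K_L|$, are computable from $A$. The universe $W$ of the countermodel is then a subset of $K_L \times \Phi_\rhd$ in the proof of Theorem \ref{CT1}, and in the proof of Theorem \ref{CT2} it consists of pairs $\langle \Gamma, \tau \rangle$ with $\rank(\Gamma) + |\tau| \le \rank(\Gamma_0) \le |K_L|$; in either case $|W|$ is bounded by a computable function $g(|\Phi|)$, uniformly in $L$. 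Consequently, $L \nvdash A$ if and only if there exists an $\IL^-$-frame with at most $g(|\Phi|)$ elements in which every axiom of $L$ is valid but $A$ is not.

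It then suffices to note that this last condition is decidable. Up to isomorphism there are only finitely many $\IL^-$-frames of a given finite size, and they can be effectively enumerated. For a fixed finite frame $F$, validity of $A$ is decidable, since it is enough to check the finitely many satisfaction relations obtained by assigning the finitely many propositional variables of $A$ to subsets of $W$. Validity of the axioms of $L$ in $F$ is likewise decidable: even though $L$ has infinitely many axiom instances, the frame correspondences of Section \ref{Sec:Ext} --- Proposition \ref{1P2} for $\J{1}$, Proposition \ref{4P4} for $\J{4}_+$, Proposition \ref{2P4} for $\J{2}_+$, and Proposition \ref{5P1} for $\J{5}$ --- reduce the validity of each axiom scheme of $L$ in an $\IL^-$-frame to a single first-order condition on $\langle W, R, \{S_x\}_{x \in W} \rangle$, which is decidable for finite $F$. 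Thus the procedure is: compute $g(|\Phi|)$, list all $\IL^-$-frames of size at most $g(|\Phi|)$, and search for one validating the axioms of $L$ but refuting $A$; output ``$L \nvdash A$'' if one is found and ``$L \vdash A$'' otherwise. Soundness ($(1 \Rightarrow 2)$ of the two theorems) and the bounded finite model property ($(2 \Rightarrow 1)$) guarantee correctness; alternatively one may run this bounded search in parallel with an enumeration of $L$-derivations.

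The argument is essentially routine, and I do not expect a genuine obstacle. The only point requiring a little care is extracting the explicit computable bound $g$ from the two completeness constructions and checking that it is uniform over the twelve logics --- but this is immediate from the definitions of $W$ in the respective proofs. No new semantic or syntactic ideas are needed beyond Theorems \ref{CT1}, \ref{CT2} and the frame correspondence results of Section \ref{Sec:Ext}.
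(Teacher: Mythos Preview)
Your proposal is correct and is essentially the argument the paper has in mind: the paper simply states the corollary without proof, treating it as an immediate consequence of the finite model property established in Theorems \ref{CT1} and \ref{CT2} together with the recursive axiomatization of each logic. Your write-up spells out the standard details (computable bound on countermodel size, decidable frame conditions via Propositions \ref{1P2}, \ref{4P4}, \ref{2P4}, \ref{5P1}) that the paper leaves implicit.
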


Since every $\IL^-$-frame can be transformed into an $\ILS$-frame, we obtain the following corollary. 

\begin{cor}\label{CC2}
Let $L$ be one of twelve logics in Figure \ref{Fig1} and let $A$ be any formula. 
Then the following are equivalent: 
\begin{enumerate}
	\item $L \vdash A$. 
	\item $A$ is valid in all (finite) $\ILS$-frames in which all axioms of $L$ are valid. 
\end{enumerate}
\end{cor}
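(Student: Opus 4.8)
The plan is to derive both implications from the modal completeness theorems with respect to $\IL^-$-frames (Theorems \ref{CT1} and \ref{CT2}) together with the fact, recorded in the paragraph following Definition \ref{Def:ILS}, that every $\IL^-$-frame can be turned into an $\ILS$-frame without changing which formulas are valid on it.

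For $(1 \Rightarrow 2)$ I would argue by induction on the length of a proof of $A$ in $L$, exactly as in the proof of Proposition \ref{0P4}. Let $F$ be an $\ILS$-frame in which all axioms of $L$ are valid. By Proposition \ref{0P4}, every theorem of $\IL^-$ is valid in $F$, and by the argument given there the rules Modus Ponens, Necessitation, $\R{1}$ and $\R{2}$ preserve validity in $F$. The additional axioms of $L$ are among $\J{1}$, $\J{2}_+$, $\J{4}_+$ and $\J{5}$, and these are valid in $F$ by hypothesis. Hence every theorem of $L$ is valid in $F$; in particular $A$ is.

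For $(2 \Rightarrow 1)$ I would prove the contrapositive. Suppose $L \nvdash A$. Since $L$ is one of the twelve logics of Figure \ref{Fig1}, Theorem \ref{CT1} or Theorem \ref{CT2} supplies a finite $\IL^-$-model $M = \langle W, R, \{S_x\}_{x \in W}, \Vdash \rangle$ whose underlying frame $F = \langle W, R, \{S_x\}_{x \in W} \rangle$ validates all axioms of $L$, but with $x \nVdash A$ for some $x \in W$. Define $S_x' \subseteq W \times (\mathcal{P}(W) \setminus \{\emptyset\})$ by $y S_x' V :\iff \exists z \in V(y S_x z)$, as in the paragraph after Definition \ref{Def:ILS}; then $F' := \langle W, R, \{S_x'\}_{x \in W} \rangle$ is a finite $\ILS$-frame. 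The point is that validity of an arbitrary formula in $F'$ coincides with its validity in $F$: a satisfaction relation on either frame is uniquely determined by its restriction to propositional variables, and for matching valuations the induced satisfaction relations on $F$ and $F'$ agree on all formulas, which is precisely what was established after Definition \ref{Def:ILS}. Consequently every axiom of $L$, being valid in $F$, is valid in $F'$, while $A$ is not valid in $F'$, since it fails in the $\ILS$-model induced by $M$. Thus $A$ is not valid in all finite $\ILS$-frames validating the axioms of $L$, which is what we wanted.

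The only slightly delicate point is the transfer of \emph{validity}, as opposed to mere truth in a single model, between $F$ and $F'$; this rests on the bijective correspondence between valuations of propositional variables and satisfaction relations on each frame, which has already been made explicit, so I do not expect a genuine obstacle and the corollary follows immediately.
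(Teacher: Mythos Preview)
Your argument is correct and is precisely the intended one: the paper states the corollary without proof, noting only that it follows because every $\IL^-$-frame can be transformed into an $\ILS$-frame; you have simply unpacked this, using Proposition \ref{0P4} for soundness and Theorems \ref{CT1}, \ref{CT2} together with the frame transformation after Definition \ref{Def:ILS} for the converse. One minor inaccuracy: for $\CL$ and $\IL$ the additional axiom is $\J{2}$ rather than $\J{2}_+$, but this is harmless since your $(1\Rightarrow 2)$ argument only needs that the extra axioms, whatever they are, are valid in $F$ by hypothesis.
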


\section{Modal incompleteness with respect to $\IL^-$-frames}\label{Sec:Incompl}

In this section, we prove the modal incompleteness of eight logics shown in Figure \ref{Fig2} with respect to $\IL^-$-frames. 
As in Figure \ref{Fig1}, no more line segments can be drawn in the figure.

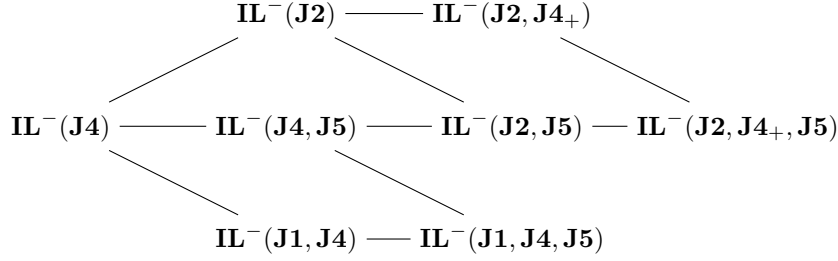
\begin{figure}[ht]
\centering
\begin{tikzpicture}
\node (IL4) at (0,1.5) {$\IL^-(\J{4})$};
\node (IL14) at (3,0) {$\IL^-(\J{1}, \J{4})$};
\node (IL45) at (3,1.5) {$\IL^-(\J{4}, \J{5})$};
\node (IL2) at (3,3){$\IL^-(\J{2})$};
\node (IL145) at (6,0){$\IL^-(\J{1}, \J{4}, \J{5})$};
\node (IL25) at (6,1.5){$\IL^-(\J{2}, \J{5})$};
\node (IL24) at (6,3){$\IL^-(\J{2}, \J{4}_+)$};
\node (IL245) at (9,1.5){$\IL^-(\J{2}, \J{4}_+, \J{5})$};
\draw [-] (IL14)--(IL4);
\draw [-] (IL45)--(IL4);
\draw [-] (IL2)--(IL4);
\draw [-] (IL145)--(IL14);
\draw [-] (IL145)--(IL45);
\draw [-] (IL25)--(IL45);
\draw [-] (IL25)--(IL2);
\draw [-] (IL24)--(IL2);
\draw [-] (IL245)--(IL25);
\draw [-] (IL245)--(IL24);
\end{tikzpicture}
\caption{Sublogics of $\IL$ incomplete with respect to $\IL^-$-frames}
\label{Fig2}
\end{figure}

First, we prove incompleteness of the logics $\IL^-(\J{2})$, $\IL^-(\J{2}, \J{4}_+)$, $\IL^-(\J{2}, \J{5})$ and $\IL^-(\J{2}, \J{4}_+, \J{5})$. 

\begin{prop}\label{ICP1}
$\IL^-(\J{2}, \J{4}_+, \J{5}) \nvdash \J{2}_+$. 
\end{prop}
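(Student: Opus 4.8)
The proposition asserts a non\nobreakdash-derivability, so the natural route is semantic: I would exhibit an $\ILS$-frame $F$ in which all axioms of $\IL^-(\J{2},\J{4}_+,\J{5})$ are valid while $\J{2}_+$ is not. Given such an $F$, the conclusion is immediate by soundness: the axioms of $\IL^-$ are valid in every $\ILS$-frame by Proposition~\ref{0P4}, the schemata $\J{2},\J{4}_+,\J{5}$ are valid in $F$ by construction, and the inference rules of $\IL^-$ preserve validity in $\ILS$-frames (by the same arguments as in the proof of Proposition~\ref{0P4}); hence every theorem of $\IL^-(\J{2},\J{4}_+,\J{5})$ is valid in $F$, so $\J{2}_+$ cannot be one. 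Note that one is forced to use $\ILS$-frames rather than $\IL^-$-frames here, since by Proposition~\ref{2P4} the schemata $\J{2}$ and $\J{2}_+$ have the same frame condition on $\IL^-$-frames and so cannot be separated by them.

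To build $F$ I would use the frame-condition characterisations from Section~\ref{Sec:Ext}: Propositions~\ref{4P5} and~\ref{4P6} for $\J{4}$ and $\J{4}_+$, Proposition~\ref{5P2} for $\J{5}$, Proposition~\ref{2P5} for $\J{2}$, and Proposition~\ref{2P6} for $\J{2}_+$. By Proposition~\ref{4P6} I may take each $S_x$ to be a relation into $\mathcal{P}(R[x])\setminus\{\emptyset\}$, which makes the conditions for $\J{4}$ and $\J{4}_+$ automatic. Concretely, take $W=\{x,a,c\}$ with $R=\{(x,a),(x,c)\}$, so that $a$ and $c$ are $R$-maximal and the $\J{5}$-condition imposes nothing; set $S_a=S_c=\emptyset$; and let $S_x$ be the smallest monotone relation with $a S_x \{c\}$ and $c S_x \{a,c\}$, that is, $a S_x \{c\}$, $a S_x \{a,c\}$, $c S_x \{a,c\}$ and nothing else. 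Then $\J{2}_+$ fails at $x$: with $y=c$, $V_0=\{a\}$, $V_1=\{c\}$ and $U_a=\{c\}$, the premises $c S_x (V_0\cup V_1)=c S_x \{a,c\}$ and $a S_x U_a=a S_x \{c\}$ hold, but the required conclusion $c S_x (U_a\cup V_1)=c S_x \{c\}$ does not; hence by Proposition~\ref{2P6}, $\J{2}_+$ is not valid in $F$.

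The one step requiring care --- the main obstacle --- is checking that $F$ does satisfy the $\J{2}$-condition of Proposition~\ref{2P5}, since the transitivity-style closure built into that condition threatens to force $c S_x \{c\}$ and so spoil the counterexample. The point that makes it work is that $c$ has no $S_x$-successor: whenever a set $V$ with $c\in V$ satisfies $y S_x V$, no family $(U_z)_{z\in V}$ with $z S_x U_z$ exists, so the $\J{2}$-implication is vacuous for that $V$, which blocks any unwanted propagation. After that observation the remaining instances of the $\J{2}$-condition at index $x$ (only $(y,V)\in\{(a,\{c\}),(a,\{a,c\}),(c,\{a,c\})\}$) are a short finite check, all of whose conclusions lie among $a S_x \{a,c\}$ and $c S_x \{a,c\}$, which hold; and $\J{4}$, $\J{4}_+$, $\J{5}$ are immediate as noted. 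An alternative presentation avoids the frame-condition propositions and instead fixes a valuation on $F$ falsifying a concrete instance of $\J{2}_+$ at $x$ and verifies the matching instances of $\J{2}$, $\J{4}_+$, $\J{5}$ directly, but the route via the propositions already proved is shorter.
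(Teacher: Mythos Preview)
Your overall strategy coincides with the paper's: exhibit an $\ILS$-frame validating $\J{2}$, $\J{4}_+$ and $\J{5}$ but not $\J{2}_+$, and appeal to soundness. Your frame is different from the paper's---you use three worlds where the paper uses four---and your frame does work: the verifications that $\J{4}$, $\J{4}_+$ and $\J{5}$ hold are correct, the witness $(y,V_0,V_1,U_a)=(c,\{a\},\{c\},\{c\})$ does refute the $\J{2}_+$-condition of Proposition~\ref{2P6}, and the short finite check you sketch at the end does establish the $\J{2}$-condition of Proposition~\ref{2P5}.

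However, the explanation you give for the $\J{2}$-check contains an outright error. You write that ``$c$ has no $S_x$-successor'', and conclude that the $\J{2}$-implication is vacuous whenever $c\in V$. But you yourself defined $c S_x \{a,c\}$, so $c$ \emph{does} have an $S_x$-successor; indeed your refutation of $\J{2}_+$ uses exactly this. Moreover every $V$ with $yS_xV$ in your frame contains $c$, so the vacuity argument, were it correct, would dispose of all instances---yet you then list all three instances as ``remaining''. Drop the vacuity paragraph entirely and rely on the finite check: for each $(y,V)\in\{(a,\{c\}),(a,\{a,c\}),(c,\{a,c\})\}$, any admissible family $(U_z)$ has $U_c\supseteq\{a,c\}$, hence the union is $\{a,c\}$ and the required $yS_x\{a,c\}$ holds. (A minor point: under the paper's Definition~\ref{Def:ILS}, Monotonicity ranges over all $U\subseteq W$, so ``the smallest monotone relation'' also contains $aS_x\{x,c\}$, $aS_x\{x,a,c\}$ and $cS_x\{x,a,c\}$; this does not affect any of the verifications since the frame conditions only see $V\cap R[x]$.)
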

\begin{proof}
Let $F = \langle W, R, \{S_x\}_{x \in W} \rangle$ be the $\ILS$-frame defined as follows: 

\begin{enumerate}
	\item $W := \{x, y_0, y_1, y_2\}$; 
	\item $R := \{(x, y_0), (x, y_1), (x, y_2)\}$; 
	\item $y_0 S_x V :\iff V \supseteq \{y_1, y_2\}$;\\
	$y_1 S_x V :\iff V \supseteq \{y_2\}$;\\
	$y_2 S_x V :\iff V \supseteq \{y_0, y_1, y_2\}$.
\end{enumerate}

\begin{figure}[ht]
\centering
\begin{tikzpicture}
\node [draw, circle] (x) at (1,0) {$x$};
\node [draw, circle] (y0) at (0,1.5) {$y_0$};
\node [draw, circle] (y1) at (1,1.5) {$y_1$};
\node [draw, circle] (y2) at (2,1.5) {$y_2$};
\draw [thick, ->] (x)--(y0);
\draw [thick, ->] (x)--(y1);
\draw [thick, ->] (x)--(y2);
\end{tikzpicture}
\label{Fig3}
\end{figure}

By Monotonicity of $S_x$, $F$ is actually an $\ILS$-frame. 
First, we prove that $\J{2}$, $\J{4}_+$ and $\J{5}$ are valid in $F$. 

\begin{itemize}
	\item $\J{4}_+$: If $y S_x V$, then $V \cap R[x] = V \setminus \{x\}$. 
	By the definition of $S_x$, we have $y S_x (V \setminus \{x\})$. 
	Thus $y S_x(V \cap R[x])$. 
	By Proposition \ref{4P6}, $\J{4}_+$ is valid in $F$. 

	\item $\J{2}$: Since $\IL^-(\J{4}_+) \vdash \J{4}$, $\J{4}$ is also valid in $F$. 
	Suppose $y S_x V$ and $\forall z \in V \cap R[x] (z S_x U_z)$. 
	Then $y_2 \in V$ if $y$ is either $y_0$, $y_1$ or $y_2$. 
	Also since $y_2 \in V \cap R[x]$, there exists $U_{y_2} \subseteq W$ such that $y_2 S_x U_{y_2}$. 
	By the definition of $S_x$, $U_{y_2} \supseteq \{y_0, y_1, y_2\}$. 
	Thus $\bigcup_{z \in V \cap R[x]} U_z \supseteq \{y_0, y_1, y_2\}$. 
	Then we have $y S_x (\bigcup_{z \in V \cap R[x]} U_z)$ if $y$ is either $y_0$, $y_1$ or $y_2$. 
	Therefore $\J{2}$ is valid in $F$ by Proposition \ref{2P5}. 

	\item $\J{5}$: Since there are no $y, z \in W$ such that $x R y$ and $y R z$, by Proposition \ref{5P2}, $\J{5}$ is trivially valid in $F$. 
\end{itemize}

It suffices to show that $\J{2}_+$ is not valid in $F$. 
Let $V_0 = \{y_1\}$ and $V_1 = \{y_2\}$, then $y_0 S_x (V_0 \cup V_1)$. 
Also let $U_{y_1} = \{y_2\}$, then $\forall z \in V_0 \cap R[x] (z S_x U_z)$. 
On the other hand, since $\bigcup_{z \in V_0 \cap R[x]} U_z \cup V_1 = U_{y_1} \cup V_1 = \{y_2\} \cup \{y_2\} = \{y_2\}$, $y_0 S_x (\bigcup_{z \in V_0 \cap R[x]} U_z \cup V_1)$ does not hold. 
	Therefore $\J{2}_+$ is not valid in $F$ by Proposition \ref{2P6}. 
\end{proof}

\begin{cor}\label{ICC1}
Let $L$ be any logic with $\IL^-(\J{2}) \subseteq L \subseteq \IL^-(\J{2}, \J{4}_+, \J{5})$. 
Then $L$ is not complete with respect to $\IL^-$-frames. 
\end{cor}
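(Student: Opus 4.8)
The plan is to separate $L$ from the class of $\IL^-$-frames validating its axioms by means of a single scheme, namely $\J{2}_+$. First I would record the soundness half: since $\IL^- \subseteq L$ and, as established in the proof of Proposition \ref{0P1}, all axioms of $\IL^-$ are valid in every $\IL^-$-frame while the rules of $\IL^-$ preserve validity, every theorem of $L$ is valid in every $\IL^-$-frame $F$ in which all axioms of $L$ are valid. In particular, because $\IL^-(\J{2}) \subseteq L$, the scheme $\J{2}$ is valid in every such $F$.

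Next I would invoke Proposition \ref{2P4}, which tells us that on an arbitrary $\IL^-$-frame the validity of $\J{2}$ is equivalent to the validity of $\J{2}_+$. Consequently $\J{2}_+$ is valid in every (finite) $\IL^-$-frame in which all axioms of $L$ are valid.

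Finally, from $L \subseteq \IL^-(\J{2}, \J{4}_+, \J{5})$ together with Proposition \ref{ICP1}, which exhibits an $\ILS$-countermodel to $\J{2}_+$ over $\IL^-(\J{2}, \J{4}_+, \J{5})$, we obtain $L \nvdash \J{2}_+$. Thus $\J{2}_+$ is a formula that is valid in all (finite) $\IL^-$-frames validating the axioms of $L$, yet is not a theorem of $L$; this is precisely the failure of modal completeness with respect to $\IL^-$-frames, and the same witness $\J{2}_+$ works uniformly for every $L$ in the indicated interval and for both the finite and the general frame semantics.

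There is no genuine obstacle here, since the content is carried entirely by the two preceding results. The only point deserving care is that the argument must go through the frame correspondence of Proposition \ref{2P4} — it is the collapse of the two deductively distinct schemata $\J{2}$ and $\J{2}_+$ to a common $\IL^-$-frame condition that forces the incompleteness — rather than through any purely deductive relation between them; and one must make sure that the $\ILS$-frame produced in Proposition \ref{ICP1} really does refute $\J{2}_+$ while validating all axioms of the largest logic $\IL^-(\J{2}, \J{4}_+, \J{5})$ in the interval.
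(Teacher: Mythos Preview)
Your proposal is correct and follows essentially the same route as the paper: use Proposition \ref{2P4} to show that $\J{2}_+$ is valid in every $\IL^-$-frame validating the axioms of $L$ (since $\J{2}$ is), and use Proposition \ref{ICP1} together with $L \subseteq \IL^-(\J{2}, \J{4}_+, \J{5})$ to conclude $L \nvdash \J{2}_+$. The paper's argument is just the terse version of yours.
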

\begin{proof}
Let $F$ be any $\IL^-$-frame in which all axioms of $L$ are valid. 
Then $\J{2}$ is valid in $F$, and hence $\J{2}_+$ is also valid in $F$ by Proposition \ref{2P4}. 
However, by Proposition \ref{ICP1}, $L \nvdash \J{2}_+$. 
Therefore $L$ is not complete with respect to $\IL^-$-frames. 
\end{proof}

Secondly, we prove incompleteness of the logics $\IL^-(\J{4})$, $\IL^-(\J{1}, \J{4})$, $\IL^-(\J{4}, \J{5})$ and $\IL^-(\J{1}, \J{4}, \J{5})$. 

\begin{prop}\label{ICP2}
$\IL^-(\J{1}, \J{4}, \J{5}) \nvdash \J{4}_+$. 
\end{prop}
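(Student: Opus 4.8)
The plan is to imitate the proof of Proposition \ref{ICP1}: exhibit a finite $\ILS$-frame $F = \langle W, R, \{S_x\}_{x \in W} \rangle$ in which $\J{1}$, $\J{4}$ and $\J{5}$ are valid but $\J{4}_+$ is not. Since every theorem of $\IL^-$ is valid in all $\ILS$-frames by Proposition \ref{0P4}, and the inference rules of $\IL^-$ preserve validity in any $\ILS$-frame, once we have checked that $\J{1}$, $\J{4}$ and $\J{5}$ hold in $F$ it follows that every theorem of $\IL^-(\J{1}, \J{4}, \J{5})$ is valid in $F$; as $\J{4}_+$ fails in $F$, we conclude $\IL^-(\J{1}, \J{4}, \J{5}) \nvdash \J{4}_+$.

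For the frame I would take $W := \{x, y, z, u\}$ and $R := \{(x, y), (x, z)\}$, so that $R$ is vacuously transitive and conversely well-founded, $R[x] = \{y, z\}$, and $u \notin R[x]$. I would then let $S_x$ be the monotone relation generated by the pairs $(y, \{y\})$, $(z, \{z\})$ and $(y, \{z, u\})$ — explicitly, $y S_x V \iff (V \supseteq \{y\}\ \text{or}\ V \supseteq \{z, u\})$ and $z S_x V \iff V \supseteq \{z\}$ — and set $S_y = S_z = S_u := \emptyset$. By construction $S_x$ is monotone, and $w S_v V$ entails $v R w$, so $F$ is indeed an $\ILS$-frame. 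The essential point is that the witness set $\{z, u\}$ for $y$ contains the non-successor $u$, which is exactly what will make $\J{4}_+$ fail while $\J{4}$ survives.

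The three validity checks I would carry out using the frame characterisations of Section \ref{Sec:Ext}. For $\J{1}$, by Proposition \ref{1P3} it suffices that $y S_x \{y\}$ and $z S_x \{z\}$, which hold by construction, and these cover the only $R$-pairs issuing from $x$. For $\J{5}$, by Proposition \ref{5P2} there is nothing to check since $F$ contains no $R$-chain of length two. For $\J{4}$, by Proposition \ref{4P5} I would note that any $V$ with $y S_x V$ contains $y$ or contains $z$, and any $V$ with $z S_x V$ contains $z$, so in every case $V$ meets $R[x]$. Finally, to see that $\J{4}_+$ fails I would apply Proposition \ref{4P6}: $y S_x \{z, u\}$ holds, but $\{z, u\} \cap R[x] = \{z\}$ and $y S_x \{z\}$ is false because $\{z\} \not\supseteq \{y\}$ and $\{z\} \not\supseteq \{z, u\}$; hence $\J{4}_+$ is not valid in $F$.

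None of these steps involves real difficulty; the only delicate point is the design of $S_x$ itself, where one must simultaneously meet the $\J{1}$-closure requirement, avoid creating a $\J{4}$-violating set, and still leave a monotone pair $(y, V)$ with $(y, V \cap R[x])$ absent — the extra non-$R[x]$ world $u$ being the device that reconciles these. As with Corollary \ref{ICC1}, this proposition immediately yields that every logic $L$ with $\IL^-(\J{4}) \subseteq L \subseteq \IL^-(\J{1}, \J{4}, \J{5})$ is incomplete with respect to $\IL^-$-frames, since $\J{4}$ forces $\J{4}_+$ over $\IL^-$-frames by Proposition \ref{4P4}.
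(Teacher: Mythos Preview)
Your proposal is correct and is essentially identical to the paper's proof: your frame is the same as the paper's up to renaming $(y_0, y_1, y_2) \mapsto (y, z, u)$, and the verifications of $\J{1}$, $\J{4}$, $\J{5}$ and the failure of $\J{4}_+$ proceed in exactly the same way via Propositions \ref{1P3}, \ref{4P5}, \ref{5P2} and \ref{4P6}.
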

\begin{proof}
We define the $\ILS$-frame $F = \langle W, R, \{S_x\}_{x \in W} \rangle$ as follows: 

\begin{enumerate}
	\item $W := \{x, y_0, y_1, y_2\}$; 
	\item $R := \{(x, y_0), (x, y_1)\}$; 
	\item $y_0 S_x V :\iff V \supseteq \{y_0\}$ or $V \supseteq \{y_1, y_2\}$;\\
	$y_1 S_x V :\iff V \supseteq \{y_1\}$.
\end{enumerate}

\begin{figure}[ht]
\centering
\begin{tikzpicture}
\node [draw, circle] (x) at (0.5,0) {$x$};
\node [draw, circle] (y0) at (0,1.5) {$y_0$};
\node [draw, circle] (y1) at (1,1.5) {$y_1$};
\node [draw, circle] (y2) at (2,1.5) {$y_2$};
\draw [thick, ->] (x)--(y0);
\draw [thick, ->] (x)--(y1);
\end{tikzpicture}
\label{Fig4}
\end{figure}

Indeed, $F$ is an $\ILS$-frame. 
We show $\J{1}$, $\J{4}$ and $\J{5}$ are valid in $F$. 

\begin{itemize}
	\item $\J{1}$: 
	Since $y_0 S_x \{y_0\}$ and $y_1 S_x \{y_1\}$, $\J{1}$ is valid by Proposition \ref{1P3}. 

	\item $\J{4}$: 
	Suppose $y S_x V$. 
	Then whatever $y$ is, either $y_0 \in V$ or $y_1 \in V$. 
	Thus there exists $z \in V$ such that $x R z$. 
	Hence $\J{4}$ is valid in $F$ by Proposition \ref{4P5}. 

	\item $\J{5}$: 
	As in the proof of Proposition \ref{ICP1}, $\J{5}$ is trivially valid in $F$. 
\end{itemize}

Then we show that $\J{4}_+$ is not valid in $F$. 
Let $V = \{y_1, y_2\}$, then $y_0 S_x V$. 
On the other hand, since $V \cap R[x] = \{y_1\}$, $y_0 S_x (V \cap R[x])$ does not hold. 
Therefore $\J{4}_+$ is not valid in $F$ by Proposition \ref{4P6}. 
\end{proof}

\begin{cor}\label{ICC2}
Let $L$ be any logic with $\IL^-(\J{4}) \subseteq L \subseteq \IL^-(\J{1}, \J{4}, \J{5})$. 
Then $L$ is incomplete with respect to $\IL^-$-frames. 
\end{cor}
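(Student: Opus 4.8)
The plan is to argue exactly as in Corollary \ref{ICC1}, exploiting the fact that $\J{4}$ and $\J{4}_+$ collapse over $\IL^-$-frames while being separated by an $\ILS$-frame. First I would fix an arbitrary $\IL^-$-frame $F = \langle W, R, \{S_x\}_{x \in W}\rangle$ in which all axioms of $L$ are valid. Since $\IL^-(\J{4}) \subseteq L$, the scheme $\J{4}$ is among the axioms of $L$, so $\J{4}$ is valid in $F$. By Proposition \ref{4P4}, for $\IL^-$-frames the validity of $\J{4}$ is equivalent to the validity of $\J{4}_+$; hence $\J{4}_+$ is valid in $F$ as well. As $F$ was arbitrary, $\J{4}_+$ is valid in every $\IL^-$-frame in which all axioms of $L$ are valid.

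On the other hand, since $L \subseteq \IL^-(\J{1}, \J{4}, \J{5})$, Proposition \ref{ICP2} yields $\IL^-(\J{1}, \J{4}, \J{5}) \nvdash \J{4}_+$, and therefore $L \nvdash \J{4}_+$. Combining the two observations, there is an instance of $\J{4}_+$ that is valid in all $\IL^-$-frames validating the axioms of $L$ but is not a theorem of $L$. By the definition of completeness with respect to $\IL^-$-frames, $L$ is incomplete, which is what we wanted.

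The only point worth flagging is that Proposition \ref{4P4} is phrased for a single $\IL^-$-frame and for the schemes as wholes; since every instance of $\J{4}_+$ is forced once all instances of $\J{4}$ are valid in $F$, no instance-by-instance bookkeeping is needed and the argument is uniform in $L$. All the genuine work has already been done — the frame-correspondence results of Section \ref{Sec:Ext} and, above all, the construction of the separating $\ILS$-frame in Proposition \ref{ICP2} — so there is no real obstacle here beyond quoting those results correctly; in particular there is no need to exhibit a new countermodel.
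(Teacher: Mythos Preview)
Your argument is correct and is exactly the approach the paper intends: it mirrors the proof of Corollary~\ref{ICC1}, replacing the pair $\J{2}/\J{2}_+$ and Propositions~\ref{2P4}, \ref{ICP1} by $\J{4}/\J{4}_+$ and Propositions~\ref{4P4}, \ref{ICP2}. The paper omits the proof precisely because it is this routine transposition.
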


\section{Modal completeness with respect to $\ILS$-frames}\label{Sec:GCompl}

In this section, we prove eight logics shown in Figure \ref{Fig2} are complete with respect to $\ILS$-frames. 
As in Section \ref{Sec:Compl}, at first we prove the completeness theorem of logics other than $\IL^-(\J{2}, \J{5})$ and $\IL^-(\J{2}, \J{4}_+, \J{5})$. 

\begin{thm}\label{GCT1}
Let $L$ be one of the logics $\IL^-(\J{4})$, $\IL^-(\J{1}, \J{4})$, $\IL^-(\J{4}, \J{5})$, $\IL^-(\J{1}, \J{4}, \J{5})$, $\IL^-(\J{2})$ and $\IL^-(\J{2}, \J{4}_+)$. 
Then for any formula $A$, the following are equivalent: 
\begin{enumerate}
	\item $L \vdash A$. 
	\item $A$ is valid in all (finite) $\ILS$-frames where all axioms of $L$ are valid. 
\end{enumerate}
\end{thm}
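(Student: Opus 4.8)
The plan is to mirror the proof of Theorem \ref{CT1}, but with each single-world choice function $S_{\langle\Gamma,B\rangle}$ replaced by a set-valued relation. The implication $(1\Rightarrow2)$ is immediate: the axioms of $L$ are trivially valid in any frame in which they are valid, and the inference rules of $\IL^-$ preserve $\ILS$-validity exactly as in Proposition \ref{0P4}. For $(2\Rightarrow1)$ I argue by contraposition. Assuming $L\nvdash A$, fix a finite adequate $\Phi$ with ${\sim}A\in\Phi$ (Proposition \ref{PP1}) and some $\Gamma_0\in K_L$ with ${\sim}A\in\Gamma_0$. As in Theorem \ref{CT1}, put $W=\{\langle\Gamma,B\rangle:\Gamma\in K_L,\ B\in\Phi_\rhd\}$, declare $\langle\Gamma,B\rangle R\langle\Delta,C\rangle$ iff $\Gamma\prec\Delta$, and $\langle\Gamma,B\rangle\Vdash p$ iff $p\in\Gamma$; then $W$ is finite and $\langle W,R\rangle$ is transitive and conversely well-founded, so any monotone assignment $\{S_x\}_{x\in W}$ yields a finite $\ILS$-frame.

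The real work is to define $S_{\langle\Gamma,B\rangle}$ by a condition $\mathcal{C}_L$ which requires a nonempty $\mathcal{V}\subseteq W$ to \emph{contain} prescribed witness-worlds rather than to \emph{be} a witness; Monotonicity is then automatic, since $\mathcal{C}_L$ only makes existential demands on $\mathcal{V}$. For the four $\J{4}$-logics I would set $\langle\Delta,C\rangle S_{\langle\Gamma,B\rangle}\mathcal{V}$ iff $\langle\Gamma,B\rangle R\langle\Delta,C\rangle$, some member of $\mathcal{V}$ is $R$-accessible from $\langle\Gamma,B\rangle$ (which yields $\J{4}$ by Proposition \ref{4P5}), and, if $\Gamma\prec_C\Delta$ (and, for the variants containing $\J{5}$, additionally $\Box{\sim}C\in\Delta$), some $\langle\Theta,G\rangle\in\mathcal{V}$ has ${\sim}C\in\Theta$. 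Then for the variants with $\J{1}$ the singleton $\{\langle\Delta,C\rangle\}$ witnesses Proposition \ref{1P3} (using that $C\rhd C\in\Gamma$ forces ${\sim}C\in\Delta$ whenever $\Gamma\prec_C\Delta$), and $\J{5}$ follows from Proposition \ref{5P2} because $\Box{\sim}C$ is inherited along $\prec$. One then verifies that every axiom of $L$ is valid in the resulting frame via the correspondence propositions of Section \ref{Sec:Ext}, and proves the Truth Lemma by induction on $C$; the only nontrivial case $C\equiv(D\rhd E)$ runs as in Theorem \ref{CT1} but now builds a \emph{set}: for $(\Rightarrow)$, from $D\rhd E\in\Gamma$, $\langle\Gamma,B\rangle R\langle\Delta,F\rangle$ and $D\in\Delta$ one applies Lemmas \ref{PL5} and \ref{PL4} (following Table \ref{Tab1}), splitting on whether $\Gamma\prec_F\Delta$ or merely $\Gamma\prec_\bot\Delta$, to obtain worlds $\Theta$ with $E\in\Theta$ and the required accessibility/criticality, and collects the $\langle\Theta,F\rangle$ into $\mathcal{V}$; for $(\Leftarrow)$, Lemma \ref{PL3} yields $\Delta$ with $D\in\Delta$ and $\Gamma\prec_E\Delta$ (and $\Box{\sim}E\in\Delta$ when $\J{5}\in L$), and by $\mathcal{C}_L$ every $\mathcal{V}$ with $\langle\Delta,E\rangle S_{\langle\Gamma,B\rangle}\mathcal{V}$ contains some $\langle\Theta,G\rangle$ with ${\sim}E\in\Theta$, so $\langle\Gamma,B\rangle\nVdash D\rhd E$. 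Finally $\langle\Gamma_0,\bot\rangle\nVdash A$.

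The main obstacle is the pair $\IL^-(\J{2})$ and $\IL^-(\J{2},\J{4}_+)$. Here the countermodel must validate $\J{2}$, hence satisfy the closure condition of Proposition \ref{2P5}, while — since neither logic proves $\J{2}_+$, by Proposition \ref{ICP1} (or Corollary \ref{ICC1}) — it must \emph{fail} the condition of Proposition \ref{2P6}; so $\mathcal{C}_L$ cannot merely demand that $\mathcal{V}$ contain one critical successor $\langle\Theta,C\rangle$ with $\Gamma\prec_C\Theta$, because such a witness regenerates itself under the $\J{2}_+$-closure and would make $\J{2}_+$ valid. My plan is instead to require, when $\Gamma\prec_C\Delta$, that $\mathcal{V}$ contain \emph{two} obligatory worlds: a ``critical'' one $\langle\Theta_f,C\rangle$ with ${\sim}C\in\Theta_f$, needed for the $(\Leftarrow)$ half of the Truth Lemma (obtained from Lemma \ref{PL4}, and taken $R$-accessible via Lemma \ref{PL4}.a when $\J{4}_+$ is present, in which case one also restricts $\mathcal{C}_L$ to $\mathcal{V}\cap R[\langle\Gamma,B\rangle]$ so that Proposition \ref{4P6} holds), and a ``regenerating'' one $\langle\Theta_r,C\rangle$ with $\Gamma\prec_C\Theta_r$, obtained from Lemma \ref{PL6} (with $E\in\Theta_r$ for the $(\Rightarrow)$ direction). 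When one resolves \emph{all} of $\mathcal{V}\cap R[\langle\Gamma,B\rangle]$ and unions the resulting successor sets, processing $\langle\Theta_r,C\rangle$ — whose second coordinate is again $C$ and for which $\Gamma\prec_C\Theta_r$ — reproduces both obligations, so Proposition \ref{2P5}'s condition is met; but the partial resolution allowed by Proposition \ref{2P6} may leave $\langle\Theta_r,C\rangle$ unprocessed and resolve only $\langle\Theta_f,C\rangle$, for which $\Gamma\prec_C\Theta_f$ need not hold, so no fresh critical witness is produced and $\J{2}_+$ genuinely fails. The delicate point is to check that this two-witness $\mathcal{C}_L$ simultaneously supports the Truth Lemma and both halves of Proposition \ref{2P5} (and Proposition \ref{4P6} in the $\J{4}_+$ case); the four $\J{4}$-logics are then routine adaptations of Theorem \ref{CT1}.
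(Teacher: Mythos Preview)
Your proposal is correct and follows essentially the same approach as the paper: the same universe $W=\{\langle\Gamma,B\rangle\}$ and relation $R$, a set-valued $S$ whose condition $\mathcal{C}_L$ makes only existential demands on $\mathcal{V}$ (so Monotonicity is free), exactly the same clauses for the four $\J{4}$-logics, and the two-witness clause (one $\langle\Lambda_0,G\rangle$ with ${\sim}C\in\Lambda_0$, one $\langle\Lambda_1,C\rangle$ with $\Gamma\prec_C\Lambda_1$, the former additionally $R$-accessible when $\J{4}_+$ is present) for the two $\J{2}$-logics; the Truth Lemma is fed by Lemmas \ref{PL4}, \ref{PL5} and \ref{PL6} just as you indicate.

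One small remark on motivation: your explanation for why a single combined witness is impossible is slightly off. It is not that such a condition would make the countermodel validate $\J{2}_+$ (the completeness proof does not care whether the frame validates \emph{more} than $L$); rather, with only $\J{2}$ available, no lemma produces a single $\Theta$ having simultaneously $E\in\Theta$, ${\sim}F\in\Theta$ and $\Gamma\prec_F\Theta$ --- Lemma \ref{PL4} gives the first two, Lemma \ref{PL6} gives the first and third, and merging them needs $\J{2}_+$ (Lemma \ref{PL4}.b). The two-witness split is forced by the $(\Rightarrow)$ half of the Truth Lemma, while the ``regenerating'' witness's second coordinate $C$ is what makes the $\J{2}$-closure of Proposition \ref{2P5} go through, exactly as you describe.
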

\begin{proof}
$(1 \Rightarrow 2)$: Obvious. 

$(2 \Rightarrow 1)$: Assume $L \nvdash A$. 
Let $\Phi$ be any finite adequate set of formulas containing $\{{\sim}A\}$. 
Let $\Gamma_0 \in K_L$ be such that ${\sim}A \in \Gamma_0$. 

We define a model $M = \langle W, R, \{S_x\}_{x \in W}, \Vdash \rangle$ as follows: 
\begin{enumerate}
	\item $W = \{\langle \Gamma, B \rangle : \Gamma \in K_L$ and $B \in \Phi_\rhd\}$;
	\item $\langle \Gamma, B \rangle R \langle \Delta, C \rangle : \iff \Gamma \prec \Delta$;
	\item $\langle \Delta, C \rangle S_{\langle \Gamma, B \rangle} V : \iff$
		\begin{enumerate}
			\item $\langle \Gamma, B \rangle R \langle \Delta, C \rangle$;
			\item For some $\langle \Theta, D \rangle \in V$, $\langle \Gamma, B \rangle R \langle \Theta, D \rangle$; 
			\item The condition $\mathcal{C}_L$ holds. 
		\end{enumerate}
	\item $\langle \Gamma, B \rangle \Vdash p : \iff p \in \Gamma$. 
\end{enumerate}

The condition $\mathcal{C}_L$ depends on $L$ as follows: 

\begin{itemize}
	\item $L \in \{\IL^-(\J{4}), \IL^-(\J{1}, \J{4})\}$: If $\Gamma \prec_C \Delta$, then there exists $\langle \Lambda, G \rangle \in V$ such that ${\sim}C \in \Lambda$. 
	\item $L \in \{\IL^-(\J{4}, \J{5}), \IL^-(\J{1}, \J{4}, \J{5})\}$: If $\Gamma \prec_C \Delta$ and $\Box {\sim}C \in \Delta$, then there exists $\langle \Lambda, G \rangle \in V$ such that ${\sim}C \in \Lambda$. 
	\item $L = \IL^-(\J{2})$: If $\Gamma \prec_C \Delta$, then there exist $\langle \Lambda_0, G \rangle, \langle \Lambda_1, C \rangle \in V$ such that ${\sim}C \in \Lambda_0$ and $\Gamma \prec_C \Lambda_1$. 
	\item $L = \IL^-(\J{2}, \J{4}_+)$: If $\Gamma \prec_C \Delta$, then there exist $\langle \Lambda_0, G \rangle, \langle \Lambda_1, C \rangle \in V$ such that $\Gamma \prec \Lambda_0$, ${\sim}C \in \Lambda_0$ and $\Gamma \prec_C \Lambda_1$. 
\end{itemize}

Since $\bot \in \Phi_{\rhd}$, $\langle \Gamma_{0}, \bot \rangle \in W$. 
Therefore $W$ is non-empty. 
The set $W$ is finite and the relation $R$ is transitive and conversely well-founded. 
Moreover, by Monotonicity of $S_x$, $F = \langle W, R, \{S_x\}_{x \in W} \rangle$ is an $\ILS$-frame. 

\begin{lem}
Every axiom of $L$ is valid in $F$. 
\end{lem}
\begin{proof}
If $\langle \Delta, C \rangle S_{\langle \Gamma, B \rangle} V$, then for some $\langle \Theta, D \rangle \in V$, $\langle \Gamma, B \rangle R \langle \Theta, D \rangle$. 
Thus $\J{4}$ is valid in $F$ by Proposition \ref{4P5}. 

We distinguish the following five cases: 

\begin{itemize}
	\item $L = \IL^-(\J{1}, \J{4})$: 
	Suppose $\langle \Gamma, B \rangle R \langle \Delta, C \rangle$. 
	If $\Gamma \prec_C \Delta$, then ${\sim}C \in \Delta$ because $C \rhd C \in \Gamma$. 
	Hence $\langle \Delta, C \rangle S_{\langle \Gamma, B \rangle} \{\langle \Delta, C \rangle\}$. 
	We conclude that $\J{1}$ is valid in $F$ by Proposition \ref{1P3}. 
	
\item $L = \IL^-(\J{2})$: 
	Assume that $\langle \Delta, C \rangle S_{\langle \Gamma, B \rangle} V$ and for any $ \langle \Delta', C' \rangle \in V \cap R[\langle \Gamma, B \rangle]$, $\langle \Delta', C' \rangle S_{\langle \Gamma, B \rangle} U_{\langle \Delta', C' \rangle}$. 
We distinguish the following two cases: 
\begin{itemize}
	\item If $\Gamma \prec_C \Delta$, then for some $\langle \Lambda_1, C \rangle \in V$, we have $\Gamma \prec_C \Lambda_1$. 
Then $\langle \Lambda_1, C \rangle \in V \cap R[\langle \Gamma, B \rangle]$. 
Since $\langle \Lambda_1, C \rangle S_{\langle \Gamma, B \rangle} U_{\langle \Lambda_1, C \rangle}$ and $\Gamma \prec_C \Lambda_1$, there exist $\langle \Lambda_0', G \rangle, \langle \Lambda_1', C \rangle \in U_{\langle \Lambda_1, C \rangle}$ such that ${\sim}C \in \Lambda_0'$ and $\Gamma \prec_C \Lambda_1'$ by the definition of $S$. 
	Therefore $\langle \Delta, C \rangle S_{\langle \Gamma, B \rangle} U_{\langle \Lambda_1, C \rangle}$. 
	
	\item If $\Gamma \nprec_C \Delta$, then for some $\langle \Theta, D \rangle \in V$, $\langle \Gamma, B \rangle R \langle \Theta, D \rangle$, and hence $\langle \Theta, D \rangle S_{\langle \Gamma, B \rangle} U_{\langle \Theta, D \rangle}$. 
	Then there exists $\langle \Theta', D' \rangle \in U_{\langle \Theta, D \rangle}$ such that $\langle \Gamma, B \rangle R \langle \Theta', D' \rangle$. 
	Therefore, we have $\langle \Delta, C \rangle S_{\langle \Gamma, B \rangle} U_{\langle \Theta, D \rangle}$. 
\end{itemize}
In either case, we obtain $\langle \Delta, C \rangle S_{\langle \Gamma, B \rangle} (\bigcup_{\langle \Delta', C' \rangle \in V \cap R[\langle \Gamma, B \rangle]} U_{\langle \Delta', C' \rangle})$ by Monotonicity. 
Thus we conclude that $\J{2}$ is valid in $F$ by Proposition \ref{2P5}. 

\item $L = \IL^-(\J{2}, \J{4}_+)$: As in the case of $\IL^-(\J{2})$, $\J{2}$ is valid in $F$. 

Suppose $\langle \Delta, C \rangle S_{\langle \Gamma, B \rangle} V$. 
We distinguish the following two cases: 
\begin{itemize}
	\item If $\Gamma \prec_C \Delta$, then there exist $\langle \Lambda_0, G \rangle, \langle \Lambda_1, C \rangle \in V$ such that $\Gamma \prec \Lambda_0$, ${\sim}C \in \Lambda_0$ and $\Gamma \prec_C \Lambda_1$. 
	 Let $V' : = \{\langle \Lambda_0, G \rangle, \langle \Lambda_1, C \rangle\}$. 
	\item If $\Gamma \nprec_C \Delta$, then for some $\langle \Theta, D \rangle \in V$ with $\langle \Gamma, B \rangle R \langle \Theta, D \rangle$, let $V' : = \{\langle \Theta, D \rangle\}$. 
\end{itemize}
In either case, we have $\langle \Delta, C \rangle S_{\langle \Gamma, B \rangle} V'$. 
Also we have $V' \subseteq V \cap R[\langle \Gamma, B \rangle]$. 
Then $\langle \Delta, C \rangle S_{\langle \Gamma, B \rangle} (V \cap R[\langle \Gamma, B \rangle])$ by Monotonicity. 
Therefore $\J{4}_+$ is valid in $F$ by Proposition \ref{4P6}. 

\item $L = \IL^-(\J{4}, \J{5})$: 
Suppose $\langle \Gamma, B \rangle R \langle \Delta, C \rangle$ and $\langle \Delta, C \rangle R \langle \Theta, D \rangle$. 
Let $V: = \{\langle \Theta, D \rangle\}$, then $\langle \Theta, D \rangle \in V \cap R[\langle \Gamma, B \rangle]$. 
If $\Gamma \prec_C \Delta$ and $\Box {\sim}C \in \Delta$, then ${\sim}C \in \Theta$ because $\Delta \prec \Theta$. 
Thus $\langle \Delta, C \rangle S_{\langle \Gamma, B \rangle} V$. 
By Proposition \ref{5P2}, $\J{5}$ is valid in $F$. 

\item $L = \IL^-(\J{1}, \J{4}, \J{5})$: 
As in the cases of $\IL^-(\J{1}, \J{4})$ and $\IL^-(\J{4}, \J{5})$, the axiom schemata $\J{1}$, $\J{4}$ and $\J{5}$ are valid in $F$. 
\end{itemize}
\end{proof}

\begin{lem}[Truth Lemma]
For any $C \in \Phi$ and any $\langle \Gamma, B \rangle \in W$, $C \in \Gamma$ if and only if $\langle \Gamma, B \rangle \Vdash C$. 
\end{lem}
\begin{proof}
We prove by induction on $C$, and we only give a proof of the case $C \equiv (D \rhd E)$. 

$(\Rightarrow)$: 
Assume $D \rhd E \in \Gamma$. 
Let $\langle \Delta, F \rangle$ be any element of $W$ such that $\langle \Gamma, B \rangle R \langle \Delta, F \rangle$ and $\langle \Delta, F \rangle \Vdash D$. 
By induction hypothesis, $D \in \Delta$. 
Since $L$ contains $\J{4}$, by Lemma \ref{PL5}, there exists $\Theta \in K_L$ such that $\Gamma \prec \Theta$ and $E \in \Theta$. 

\begin{itemize}
	\item If $\Gamma \prec_F \Delta$, then by Lemma \ref{PL4}, there exists $\Lambda \in K_L$ such that $E \in \Lambda$ and ${\sim}F \in \Lambda$. 
	In particular, if $L = \IL^-(\J{2}, \J{4}_+)$, then $\Gamma \prec \Lambda$ holds. 
	Moreover, if $L \in \{\IL^-(\J{2}), \IL^-(\J{2}, \J{4}_+)\}$, then we may assume $\Gamma \prec_F \Theta$ by Lemma \ref{PL6}. 
	Let $V : = \{\langle \Theta, F \rangle, \langle \Lambda, F \rangle \}$. 

	\item If $\Gamma \nprec_F \Delta$, then let $V : = \{\langle \Theta, F \rangle\}$. 
\end{itemize}

	In either case, $\langle \Delta, F \rangle S_{\langle \Gamma, B \rangle} V$. 
	By induction hypothesis, $\langle \Theta, F \rangle \Vdash E$ and $\langle \Lambda, F \rangle \Vdash E$. 
	We conclude $\langle \Gamma, B \rangle \Vdash D \rhd E$. 
	
$(\Leftarrow)$: 
Assume $D \rhd E \notin \Gamma$. 
Then by Lemma \ref{PL3}, there exists $\Delta \in K_L$ such that $D \in \Delta$ and $\Gamma \prec_E \Delta$. 
Moreover if $L$ contains $\J{5}$, then $\Box {\sim}E \in \Delta$ also holds. 
We have $\langle \Delta, E \rangle \Vdash D$ by induction hypothesis. 
Let $V$ be any subset of $W$ such that $\langle \Delta, E \rangle S_{\langle \Gamma, B \rangle} V$. 
By the definition of $S$, there exists $\langle \Lambda, G \rangle \in V$ such that ${\sim}E \in \Lambda$. 
Then by induction hypothesis, $\langle \Lambda, G \rangle \nVdash E$. 
Thus we obtain $\langle \Gamma, B \rangle \nVdash D \rhd E$. 
\end{proof}

Since $\langle \Gamma_0, \bot \rangle \in W$ and $A \notin \Gamma_0$, it follows from Truth Lemma that $\langle \Gamma_0, \bot \rangle \nVdash A$. 
Thus $A$ is not valid in the frame of $M$. 
\end{proof}

At last, we prove the completeness of the logics $\IL^-(\J{2}, \J{5})$ and $\IL^-(\J{2}, \J{4}_+, \J{5})$ with respect to $\ILS$-frames. 

\begin{thm}\label{GCT2}
Let $L$ be one of $\IL^-(\J{2}, \J{5})$ and $\IL^-(\J{2}, \J{4}_+, \J{5})$. 
Then for any formula $A$, the following are equivalent: 
\begin{enumerate}
	\item $L \vdash A$. 
	\item $A$ is valid in all (finite) $\ILS$-frames where all axioms of $L$ are valid. 
\end{enumerate}
\end{thm}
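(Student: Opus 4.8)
The plan is to merge the two refinements used so far: the finite sequences $\tau$ from the proof of Theorem~\ref{CT2}, which are what keep $\J{5}$ valid in the presence of the starred critical relation $\prec_C^*$, and the set‑valued relations $S_x$ from the proof of Theorem~\ref{GCT1}, which are forced on us since $\J{2}$ (unlike $\J{2}_+$) is modally incomplete with respect to $\IL^-$-frames by Corollary~\ref{ICC1}. Assuming $L\nvdash A$, I would fix a finite adequate $\Phi$ with ${\sim}A\in\Phi$, a $\Gamma_0\in K_L$ with ${\sim}A\in\Gamma_0$, and the rank function on $K_L$ as in Theorem~\ref{CT2}. The universe $W$ would consist of pairs $\langle\Gamma,\tau\rangle$ with $\Gamma\in K_L$ and $\tau$ a finite sequence of elements of $\Phi_\rhd$, subject to a rank bound chosen to keep $W$ finite while guaranteeing $\langle\Gamma_0,\epsilon\rangle\in W$; the relation $R$ is ``$\Gamma\prec\Delta$ and $\tau\subsetneq\sigma$'', and $\langle\Gamma,\tau\rangle\Vdash p\iff p\in\Gamma$. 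I would declare $\langle\Delta,\sigma\rangle\,S_{\langle\Gamma,\tau\rangle}\,V$ to hold iff $\langle\Gamma,\tau\rangle R\langle\Delta,\sigma\rangle$, some member of $V$ is $R$-accessible from $\langle\Gamma,\tau\rangle$ (which yields $\J{4}$ by Proposition~\ref{4P5}), and a condition $\mathcal{C}_L$ holds; since this definition is upward closed in $V$, Monotonicity and the remaining $\ILS$-frame conditions hold automatically.

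For $L=\IL^-(\J{2},\J{5})$ the condition $\mathcal{C}_L$ would read: if $\tau\ast\langle C\rangle\subseteq\sigma$ for the $C$ with $\Gamma\prec_C^*\Delta$ and $\Box{\sim}C\in\Delta$, then $V$ contains a pair $\langle\Lambda_0,\rho_0\rangle$ with ${\sim}C\in\Lambda_0$ and a pair $\langle\Lambda_1,\rho_1\rangle$ with $\tau\ast\langle C\rangle\subseteq\rho_1$, $\Gamma\prec_C^*\Lambda_1$ and $\Box{\sim}C\in\Lambda_1$; for $L=\IL^-(\J{2},\J{4}_+,\J{5})$ I would in addition demand $\Gamma\prec\Lambda_0$ and $\tau\ast\langle C\rangle\subseteq\rho_0$, so that $V\cap R[\langle\Gamma,\tau\rangle]$ again satisfies $\mathcal{C}_L$, giving $\J{4}_+$ by Proposition~\ref{4P6}. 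Axiom validity is then checked frame‑theoretically. For $\J{5}$ one uses Proposition~\ref{5P2}: a one‑step $R$-successor $\langle\Theta,\rho\rangle$ of $\langle\Delta,\sigma\rangle$ above $\langle\Gamma,\tau\rangle$ inherits $\Gamma\prec_C^*\Theta$ by Lemma~\ref{PL2} and inherits ${\sim}C,\Box{\sim}C$ from $\Box{\sim}C\in\Delta$, so $\{\langle\Theta,\rho\rangle\}$ is a witnessing set. For $\J{2}$ one verifies the union clause of Proposition~\ref{2P5}: given $\langle\Delta,\sigma\rangle S_{\langle\Gamma,\tau\rangle}V$ and sets $U_z$ for the $R$-accessible $z\in V$, the $R$-accessible witness $\langle\Lambda_1,\rho_1\rangle$ re‑triggers $\mathcal{C}_L$ for $U_{\langle\Lambda_1,\rho_1\rangle}$, so fresh witnesses for the same $C$ appear inside $\bigcup_z U_z$, and Monotonicity completes the check.

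The Truth Lemma is the usual induction, the only nontrivial clause being $C\equiv(D\rhd E)$. For $(\Leftarrow)$, from $D\rhd E\notin\Gamma$ I would apply Lemma~\ref{PL3}.b to get $\Delta$ with $D\in\Delta$, $\Gamma\prec_E^*\Delta$, $\Box{\sim}E\in\Delta$, and set $\sigma:=\tau\ast\langle E\rangle$; any $V$ with $\langle\Delta,\sigma\rangle S_{\langle\Gamma,\tau\rangle}V$ must then contain some $\langle\Lambda_0,\rho_0\rangle$ with ${\sim}E\in\Lambda_0$, which refutes $E$ by induction, so $\langle\Gamma,\tau\rangle\nVdash D\rhd E$. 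For $(\Rightarrow)$, given $\langle\Delta,\sigma\rangle$ above $\langle\Gamma,\tau\rangle$ with $\langle\Delta,\sigma\rangle\Vdash D$ (so $D\in\Delta$), I split on the first element $F$ of $\sigma$ past $\tau$: if it fails $\Gamma\prec_F^*\Delta\ \&\ \Box{\sim}F\in\Delta$, then $\mathcal{C}_L$ is vacuous and Lemma~\ref{PL5} (using $\IL^-(\J{2})\vdash\J{4}$ by Corollary~\ref{4C1}) supplies a single $\prec$-successor $\Theta\ni E$, with $V:=\{\langle\Theta,\tau\ast\langle\bot\rangle\rangle\}$ working; if it holds, Lemma~\ref{PL6}.a supplies $\Lambda_1$ with $E\in\Lambda_1$, $\Gamma\prec_F^*\Lambda_1$, $\Box{\sim}F\in\Lambda_1$, and Lemma~\ref{PL4} (its base case, resp.\ part~(a) when $\J{4}_+\in L$) supplies $\Lambda_0$ with $E,{\sim}F\in\Lambda_0$ (resp.\ also $\Gamma\prec\Lambda_0$); taking $V$ to consist of $\langle\Lambda_1,\tau\ast\langle F\rangle\rangle$ together with $\langle\Lambda_0,\rho_0\rangle$ gives $\langle\Delta,\sigma\rangle S_{\langle\Gamma,\tau\rangle}V$ with every member forcing $E$.

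The hard part is the placement of the refuting witness $\Lambda_0$, i.e.\ ensuring $\langle\Lambda_0,\rho_0\rangle\in W$. In Theorem~\ref{CT2} the strength of $\J{2}_+$ (Lemma~\ref{PL4}.c) makes one single world simultaneously a critical $\prec_F^*$-successor and a carrier of ${\sim}F,\Box{\sim}F$, so it automatically lies $\prec$-below $\Gamma$ and meets the rank bound. With only $\J{2}$ available there is no analogue: a world forcing both $E$ and ${\sim}F$ need not lie $\prec$-below $\Gamma$, the obstruction being that $\Box(E\to F)\in\Gamma$ cannot be excluded without $\J{4}_+$ or $\J{1}$. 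Hence for $L=\IL^-(\J{2},\J{5})$ one must admit $\Lambda_0$ into $W$ carrying the empty sequence, which means the finiteness bound in the definition of $W$ should be taken as $\max_{\Delta\in K_L}\rank(\Delta)$ rather than $\rank(\Gamma_0)$; this is harmless for finiteness, for $\langle\Gamma_0,\epsilon\rangle\in W$, and for the rank computations in the Truth Lemma, where each recursion step drops the rank by at least one and raises the sequence length by one. Once this is arranged, verifying that the resulting witnessing sets still satisfy the union clause of Proposition~\ref{2P5}, and, for the $\J{4}_+$ variant, the $R[x]$-restriction clause of Proposition~\ref{4P6}, is routine bookkeeping, and $\langle\Gamma_0,\epsilon\rangle\nVdash A$ follows from the Truth Lemma as usual.
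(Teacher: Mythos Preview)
Your proposal is correct and follows essentially the same route as the paper: the same universe of pairs $\langle\Gamma,\tau\rangle$ with the rank bound $\max_{\Delta\in K_L}\rank(\Delta)$ rather than $\rank(\Gamma_0)$, the same two-witness condition $\mathcal{C}_L$ (the paper's $\langle\Lambda_1,\rho_1\rangle,\langle\Lambda_2,\rho_2\rangle$ are your $\langle\Lambda_0,\rho_0\rangle,\langle\Lambda_1,\rho_1\rangle$), and in the Truth Lemma the same split using Lemma~\ref{PL4} (base case, resp.\ part~(a)) together with Lemma~\ref{PL6}.a, with the refuting witness carrying $\rho_0=\epsilon$ in the $\IL^-(\J{2},\J{5})$ case. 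Your diagnosis of why the weaker rank bound fails and why the empty sequence is needed for $\Lambda_0$ is exactly the point the paper's construction addresses.
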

\begin{proof}
$(1 \Rightarrow 2)$: Straightforward. 

$(2 \Rightarrow 1)$: 
Suppose $L \nvdash A$. 
Let $\Phi$ be any finite adequate set containing ${\sim}A$. 
Let $\Gamma_0 \in K_L$ be such that ${\sim}A \in \Gamma_0$. 
For each $\Gamma \in K_L$, $\rank(\Gamma)$ is defined as in the proof of Theorem \ref{CT2}. 
Let $k := \max \{\rank(\Gamma) : \Gamma \in K_L\}$. 

We define the model $M : = \langle W, R, \{S_x\}_{x \in W}, \Vdash \rangle$ as follows: 
\begin{enumerate}
	\item $W = \{\langle \Gamma, \tau \rangle : \Gamma \in K_L$ and $\tau$ is a finite sequence of elements of $\Phi_\rhd$ with $\rank(\Gamma) + |\tau| \leq k\}$; 
	\item $\langle \Gamma, \tau \rangle R \langle \Delta, \sigma \rangle : \iff \Gamma \prec \Delta$ and $\tau \subsetneq \sigma$; 
	\item $\langle \Delta, \sigma \rangle S_{\langle \Gamma, \tau \rangle} V : \iff$
		\begin{enumerate}
			\item $\langle \Gamma, \tau \rangle R \langle \Delta, \sigma \rangle$;
			\item For some $\langle \Theta, \rho \rangle \in V$, $\langle \Gamma, \tau \rangle R \langle \Theta, \rho \rangle$; 
			\item If $\tau \ast \langle C \rangle \subseteq \sigma$, $\Gamma \prec_C^* \Delta$ and $\Box {\sim}C \in \Delta$, then the condition $\mathcal{C}_L$ holds. 
		\end{enumerate}
	\item $\langle \Gamma, \tau \rangle \Vdash p : \iff p \in \Gamma$. 
\end{enumerate}
The condition $\mathcal{C}_L$ depends on $L$ as follows: 
\begin{itemize}
	\item $\IL^-(\J{2}, \J{5})$: There exist $\langle \Lambda_1, \rho_1 \rangle, \langle \Lambda_2, \rho_2 \rangle \in V$ such that $\tau \ast \langle C \rangle \subseteq \rho_2$, ${\sim}C \in \Lambda_1$, $\Gamma \prec_C^* \Lambda_2$ and $\Box {\sim}C \in \Lambda_2$. 
	\item $\IL^-(\J{2}, \J{4}_+, \J{5})$: There exist $\langle \Lambda_1, \rho_1 \rangle, \langle \Lambda_2, \rho_2 \rangle \in V$ such that $\tau \ast \langle C \rangle \subseteq \rho_1$, $\tau \ast \langle C \rangle \subseteq \rho_2$, $\Gamma \prec \Lambda_1$, ${\sim}C \in \Lambda_1$, $\Gamma \prec_C^* \Lambda_2$ and $\Box {\sim}C \in \Lambda_2$. 
\end{itemize}

Let $\epsilon$ be the empty sequence. 
Then $\rank(\Gamma_0) + |\epsilon| \leq k$, and hence $\langle \Gamma_0, \epsilon \rangle \in W$. 
Therefore $W$ is a non-empty set. 
Then $\langle W, R, \{S_x\}_{x \in W} \rangle$ is a finite $\ILS$-frame. 

\begin{lem}
Every axiom of $L$ is valid in the frame $F = \langle W, R, \{S_x\}_{x \in W} \rangle$ of $M$. 
\end{lem}
\begin{proof}
$\J{2}$: 
It is easy to show that $\J{4}$ is valid in $F$ (see Proposition \ref{4P5}).  
Suppose $\langle \Delta, \sigma \rangle S_{\langle \Gamma, \tau \rangle} V$ and for any $\langle \Delta', \sigma' \rangle \in V \cap R[\langle \Gamma, \tau \rangle]$, $\langle \Delta', \sigma' \rangle S_{\langle \Gamma, \tau \rangle}U_{\langle \Delta', \sigma' \rangle}$. 
\begin{itemize}
	\item If $\tau \ast \langle C \rangle \subseteq \sigma$, $\Gamma \prec_C^* \Delta$ and $\Box {\sim}C \in \Delta$, then there exists $\langle \Lambda_2, \rho_2 \rangle \in V$ such that $\tau \ast \langle C \rangle \subseteq \rho_2$, $\Gamma \prec_C^* \Lambda_2$ and $\Box {\sim}C \in \Lambda_2$. 
Since $\langle \Lambda_2, \rho_2 \rangle \in V \cap R[\langle \Gamma, \tau \rangle]$, we have $\langle \Lambda_2, \rho_2 \rangle S_{\langle \Gamma, \tau \rangle} U_{\langle \Lambda_2, \rho_2 \rangle}$. 
Since $\tau \ast \langle C \rangle \subseteq \rho_2$, $\Gamma \prec_C^* \Lambda_2$ and $\Box {\sim}C \in \Lambda_2$, by the definition of $S$, the set $U_{\langle \Lambda_2, \rho_2 \rangle}$ satisfies the condition $\mathcal{C}_L$. 
Thus we obtain $\langle \Delta, \sigma \rangle S_{\langle \Gamma, \tau \rangle} U_{\langle \Lambda_2, \rho_2 \rangle}$. 

	\item If not, then let $\langle \Theta, \rho \rangle \in V$ be such that $\langle \Gamma, \tau \rangle R \langle \Theta, \rho \rangle$. 
We have $\langle \Theta, \rho \rangle S_{\langle \Gamma, \tau \rangle} U_{\langle \Theta, \rho \rangle}$ because $\langle \Theta, \rho \rangle \in V \cap R[\langle \Gamma, \tau \rangle]$. 
In particular, $\langle \Delta, \sigma \rangle S_{\langle \Gamma, \tau \rangle} U_{\langle \Theta, \rho \rangle}$. 
\end{itemize}

In either case, by Monotonicity, $\langle \Delta, \sigma \rangle S_{\langle \Gamma, \tau \rangle} (\bigcup_{\langle \Delta', \sigma' \rangle \in V \cap R[\langle \Gamma, \tau \rangle]} U_{\langle \Delta', \sigma' \rangle})$. 
Therefore $\J{2}$ is valid in $F$ by Proposition \ref{2P5}. 

$\J{5}$: 
Suppose $\langle \Gamma, \tau \rangle R \langle \Delta, \sigma \rangle$ and $\langle \Delta, \sigma \rangle R \langle \Theta, \rho \rangle$. 
If there exists $C$ such that $\tau \ast \langle C \rangle \subseteq \sigma$, $\Gamma \prec_C^* \Delta$ and $\Box {\sim}C \in \Delta$, then $\tau \ast \langle C \rangle \subseteq \rho$ because $\sigma \subsetneq \rho$. 
Since $\Gamma \prec_C^* \Delta$ and $\Delta \prec \Theta$, we have $\Gamma \prec_C^* \Theta$ by Lemma \ref{PL2}. 
Also ${\sim}C, \Box {\sim}C \in \Theta$ because $\Delta \prec \Theta$. 
Therefore we obtain $\langle \Delta, \sigma \rangle S_{\langle \Gamma, \tau \rangle} \{\langle \Theta, \rho \rangle\}$. 
By Proposition \ref{5P2}, $\J{5}$ is valid in $F$. 

At last, when $L = \IL^-(\J{2}, \J{4}_+, \J{5})$, we prove that $\J{4}_+$ is valid in $F$. 
Suppose $\langle \Delta, \sigma \rangle S_{\langle \Gamma, \tau \rangle} V$. 
Then there exists $\langle \Theta, \rho \rangle \in V$ such that $\langle \Gamma, \tau \rangle R \langle \Theta, \rho \rangle$, and hence $\langle \Theta, \rho \rangle \in V \cap R[\langle \Gamma, \tau \rangle]$. 
If $\tau \ast \langle C \rangle \subseteq \sigma$, $\Gamma \prec_C^* \Delta$ and $\Box {\sim}C \in \Delta$ for some $C$, then there exist $\langle \Lambda_1, \rho_1 \rangle, \langle \Lambda_2, \rho_2 \rangle \in V$ such that $\tau \ast \langle C \rangle \subseteq \rho_1$, $\tau \ast \langle C \rangle \subseteq \rho_2$, $\Gamma \prec \Lambda_1$, ${\sim}C \in \Lambda_1$, $\Gamma \prec_C^* \Lambda_2$ and $\Box {\sim}C \in \Lambda_2$. 
In particular, $\langle \Lambda_1, \rho_1 \rangle, \langle \Lambda_2, \rho_2 \rangle \in V \cap R[\langle \Gamma, \tau \rangle]$. 
Thus we obtain $\langle \Delta, \sigma \rangle S_{\langle \Gamma, \tau \rangle} (V \cap R[\langle \Gamma, \tau \rangle])$. 
By Proposition \ref{4P6}, $\J{4}_+$ is valid in $F$. 
\end{proof}

\begin{lem}[Truth Lemma]
For any formula $C \in \Phi$ and any $\langle \Gamma, \tau \rangle \in W$, $C \in \Gamma$ if and only if $\langle \Gamma, \tau \rangle \Vdash C$. 
\end{lem}
\begin{proof}
We prove the lemma by induction on the construction of $C$, and we give a proof only for $C \equiv (D \rhd E)$. 

$(\Rightarrow)$: 
Assume $D \rhd E \in \Gamma$. 
Let $\langle \Delta, \sigma \rangle$ be any element of $W$ such that $\langle \Gamma, \tau \rangle R \langle \Delta, \sigma \rangle$ and $\langle \Delta, \sigma \rangle \Vdash D$. 
Then by induction hypothesis, $D \in \Delta$. 
We distinguish the following two cases. 

\begin{itemize}
	\item If $\tau \ast \langle F \rangle \subseteq \sigma$, $\Gamma \prec_F^* \Delta$ and $\Box {\sim}F \in \Delta$, then there exists $\Lambda_1 \in K_L$ such that $E, {\sim}F \in \Lambda_1$ by Lemma \ref{PL4}. 
Moreover, if $L = \IL^-(\J{2}, \J{4}_+, \J{5})$, $\Gamma \prec \Lambda_1$ also holds. 
By Lemma \ref{PL6}, there exists $\Lambda_2 \in K_L$ such that $\Gamma \prec_F^* \Lambda_2$ and $E, \Box {\sim}F \in \Lambda_2$. 
%Let $\rho : = \tau \ast \langle F \rangle$, then it is easy to see that $\langle \Lambda_1, \rho \rangle, \langle \Lambda_2, \rho \rangle \in W$. 
Let $\rho_{1} : = \begin{cases} \epsilon & \ \text{if}\ L = \IL^-(\J{2}, \J{5}), \\ \tau \ast \langle F \rangle & \ \text{if}\ L = \IL^-(\J{2}, \J{4}_{+}, \J{5}) \end{cases}$ and let $\rho_{2} := \tau \ast \langle F \rangle$. 
Then it is easy to see that $\langle \Lambda_1, \rho_1 \rangle, \langle \Lambda_2, \rho_2 \rangle \in W$. 
Let $V : = \{\langle \Lambda_1, \rho_1 \rangle, \langle \Lambda_2, \rho_2 \rangle\}$, then $\langle \Delta, \sigma \rangle S_{\langle \Gamma, \tau \rangle} V$ by the definition of $S$. 
By induction hypothesis, $\langle \Lambda_1, \rho_1 \rangle \Vdash E$ and $\langle \Lambda_2, \rho_2 \rangle \Vdash E$. 
We conclude $\langle \Gamma, \tau \rangle \Vdash D \rhd E$. 

	\item If not, then there exists $\Theta \in K_L$ such that $\Gamma \prec \Theta$ and $E \in \Theta$ by Lemma \ref{PL5}. 
Let $\rho : = \tau \ast \langle E \rangle$, then $\langle \Theta, \rho \rangle \in W$. 
By induction hypothesis, $\langle \Theta, \rho \rangle \Vdash E$. 
Let $V : = \{\langle \Theta, \rho \rangle\}$, then $\langle \Delta, \sigma \rangle S_{\langle \Gamma, \tau \rangle} V$. 
We conclude $\langle \Gamma, \tau \rangle \Vdash D \rhd E$.
\end{itemize}
	
$(\Leftarrow)$: 
Assume $D \rhd E \notin \Gamma$. 
	By Lemma \ref{PL3}, there exists $\Delta \in K_L$ such that $D \in \Delta$, $\Gamma \prec_E^* \Delta$ and $\Box {\sim}E \in \Delta$. 
	Let $\sigma : = \tau \ast \langle E \rangle$, then $\langle \Delta, \sigma \rangle \in W$. 
	We have $\langle \Delta, \sigma \rangle \Vdash D$ by induction hypothesis. 

	Let $V$ be any subset of $W$ with $\langle \Delta, \sigma \rangle S_{\langle \Gamma, \tau \rangle} V$. 
	Since $\tau \ast \langle E \rangle = \sigma$, $\Gamma \prec_E^* \Delta$ and $\Box {\sim}E \in \Delta$, there exists $\langle \Lambda_1, \rho_1 \rangle \in V$ such that ${\sim}E \in \Lambda_1$ by the definition of $S$.
	By induction hypothesis, $\langle \Lambda_1, \rho_1 \rangle \nVdash E$. 
	Therefore we conclude $\langle \Gamma, \tau \rangle \nVdash D \rhd E$. 
\end{proof}

Since $\langle \Gamma_0, \epsilon \rangle \in W$ and $A \notin \Gamma_0$, we obtain $\langle \Gamma_0, \epsilon \rangle \nVdash A$ by Truth Lemma. 
Therefore $A$ is not valid in the frame of $M$. 
\end{proof}

\begin{cor}\label{CCC1}
Every logic shown in Figure \ref{Fig2} is decidable. 
\end{cor}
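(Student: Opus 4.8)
The plan is to derive decidability from the finite model property together with an effective bound on the size of countermodels, in the same way that Corollary \ref{CC1} follows from Theorems \ref{CT1} and \ref{CT2}. Each of the eight logics $L$ in Figure \ref{Fig2} is finitely axiomatizable --- it is $\IL^-$, which has finitely many axiom schemata and finitely many inference rules, together with finitely many of the schemata $\J{1}, \J{2}, \J{4}, \J{4}_+, \J{5}$ --- so the set of theorems of $L$ is recursively enumerable. It therefore suffices to show that the set of non-theorems of $L$ is recursively enumerable as well, and for this I would use Theorems \ref{GCT1} and \ref{GCT2}.

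First I would check that the countermodels produced in the proofs of Theorems \ref{GCT1} and \ref{GCT2} have size bounded by a computable function of the input formula. Given $A$ with $L \nvdash A$, one fixes a finite adequate set $\Phi$ with ${\sim}A \in \Phi$; inspecting the proof of Proposition \ref{PP1}, such a $\Phi$ can be chosen with $|\Phi|$ bounded by a computable function of the length of $A$. In the proof of Theorem \ref{GCT1} the universe of the countermodel is $\{\langle \Gamma, B \rangle : \Gamma \in K_L,\ B \in \Phi_\rhd\}$, of cardinality at most $2^{|\Phi|} \cdot |\Phi|$; in the proof of Theorem \ref{GCT2} the universe consists of pairs $\langle \Gamma, \tau \rangle$ with $\tau$ a finite sequence of elements of $\Phi_\rhd$ whose length is bounded by $k = \max\{\rank(\Gamma) : \Gamma \in K_L\} \leq |\Phi|$, so its cardinality is again bounded by a computable function $f$ of the length of $A$.

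Next I would observe that, for a given finite $\ILS$-frame $F = \langle W, R, \{S_x\}_{x \in W} \rangle$, it is decidable whether all axioms of $L$ are valid in $F$: the axioms of $\IL^-$ are automatically valid by Proposition \ref{0P4}, and validity of each of $\J{1}, \J{2}, \J{4}, \J{4}_+, \J{5}$ is equivalent, by Propositions \ref{1P3}, \ref{2P5}, \ref{4P5}, \ref{4P6} and \ref{5P2}, to a condition quantifying only over elements and subsets of $W$, hence to a decidable condition on a finite frame. It is likewise decidable whether a given finite frame refutes $A$, since a finite frame carries only finitely many valuations. The decision procedure for ``$L \vdash A$'' is then: enumerate all finite $\ILS$-frames whose universe has size at most $f(|A|)$ --- there are finitely many of these up to isomorphism, since for finite $W$ the relation $R$ and the relations $S_x$ range over finite sets --- discard those in which some axiom of $L$ fails, and answer affirmatively if and only if $A$ is valid in each remaining frame. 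By Theorems \ref{GCT1} and \ref{GCT2} together with the size bound this procedure is correct, and it plainly terminates; hence $L$ is decidable.

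The hard part is only a matter of bookkeeping: one must go through the constructions of Section \ref{Sec:GCompl} carefully enough to confirm that the universe of the countermodel is genuinely bounded by a computable function of $|\Phi|$ (and that $|\Phi|$ is in turn bounded by a computable function of the length of $A$). Once these effective bounds are secured, the decidability of the frame conditions on finite frames, the finiteness of the search space, and correctness via the completeness theorems are all routine.
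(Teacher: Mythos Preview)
Your proposal is correct and follows the same approach the paper has in mind: the paper states Corollary~\ref{CCC1} without proof, treating it as an immediate consequence of the effective finite model property established in Theorems~\ref{GCT1} and~\ref{GCT2}, exactly parallel to how Corollary~\ref{CC1} is obtained from Theorems~\ref{CT1} and~\ref{CT2}. You have simply spelled out the standard bookkeeping (computable bound on $|\Phi|$, hence on the countermodel, plus decidability of the frame conditions via Propositions~\ref{1P3}, \ref{2P5}, \ref{4P5}, \ref{4P6}, \ref{5P2}) that the paper leaves implicit.
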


\section{Concluding Remarks}\label{Sec:CR}

In the previous sections, we investigated the twenty natural sublogics of $\IL$ shown in Figures \ref{Fig1} and \ref{Fig2}. 
We proved that twelve of them are complete with respect to $\IL^-$-frames, but the remaining eight are not.  
Finally, in Section \ref{Sec:GCompl}, we proved that these eight logics are also complete with respect to $\ILS$-frames. 
Consequently, all these twenty logics are also complete with respect to $\ILS$-frames. 
In this situation, one of the referees proposed the following interesting problem. 

\begin{prob}
Does there exist an extension of $\IL^{-}$ incomplete with respect to $\ILS$-frames?
\end{prob}

We introduced these twenty logics to investigate $\IL^-$-frames in detail. 
As a result of our research in the present paper, it can be said that our understanding of the fine structure of $\IL$ has improved in terms of semantical and syntactical aspects. 
Our framework would be useful for finer investigations of some known results of $\IL$ and its extensions.
In addition, investigating whether these newly introduced logics satisfy natural logical properties is an interesting subject in itself.
Along these lines, a research following the present paper is proceeding by the authors (see \cite{IKO}). 

\section*{Acknowledgement}

The authors are grateful to Joost J. Joosten for his helpful comments.  
The author would like to thank the anonymous referees for careful reading and valuable comments. 
The work was supported by JSPS KAKENHI Grant Number JP19K14586.

\bibliographystyle{plain}
\bibliography{ref}

\end{document}